\newcommand\bigw{\scalebox{.85}[1]{$\bigwedge$}}
\tikzset{degil/.style={
		decoration={markings,
			mark= at position 0.5 with {
				\node[transform shape] (tempnode) {$\backslash$};
				%\draw[thick] (tempnode.north east) -- (tempnode.south west);
			}
		},
		postaction={decorate}
	}
}
      \theoremstyle{plain}
      \newtheorem{theorem}{Theorem}[section]
      \newtheorem{lemma}[theorem]{Lemma}
      \newtheorem{corollary}[theorem]{Corollary}
      \newtheorem{proposition}[theorem]{Propositition}
      \newtheorem{example}[theorem]{Example}
      \theoremstyle{definition}
      \newtheorem{definition}[theorem]{Definition}
      \theoremstyle{remark}
      \newtheorem{remark}[theorem]{Remark}
      \def\@setcopyright{}
      \def\serieslogo@{}
      \let\OLDthebibliography\thebibliography
      \renewcommand\thebibliography[1]{
      	\OLDthebibliography{#1}
      	\setlength{\parskip}{8pt}
      	\setlength{\itemsep}{0pt plus 0.3ex}
      }
\begin{document}

% First we specify the top matter (author, title, etc).
%
% Note: All of the top matter items are optional and can be omitted.
% But you will probably want to specify at least the author and title
% and perhaps an abstract.

   % author information

   % first author 

   \author{Spyridon Afentoulidis-Almpanis}
 
   \address{Department of Mathematics, Bar-Ilan University, Ramat-Gan, 5290002, Israel}

   \email{spyridon.almpanis@biu.ac.il}
   
   % title

   \title[Dirac cohomology
   for the BGG category $\mathcal{O}$]{Dirac cohomology
   	for the BGG category $\mathcal{O}$}

   % Note that the short title for running heads goes in square
   % brackets.  This is optional.  The long title goes in curly
   % braces.  In the long title, line breaks are indicated by \\.

   % abstract (optional)
   \begin{abstract}
     We study Dirac cohomology $H_D^{\mathfrak{g},\mathfrak{h}}(M)$ for modules belonging to category $\mathcal{O}$ of a finite dimensional complex semisimple Lie algebra. We start by studying the generalized infinitesimal character decomposition of $M\otimes S$, with $S$ being a spin module of $\mathfrak{h}^\perp$. As a consequence, "Vogan's conjecture" holds, and we prove a nonvanishing result for $H_D^{\mathfrak{g},\mathfrak{h}}(M)$ while we show that in the case of a Hermitian symmetric pair $(\mathfrak{g},\mathfrak{k})$ and an irreducible unitary module $M\in\mathcal{O}$, Dirac cohomology coincides with the nilpotent Lie algebra cohomology with coefficients in $M$. In the last part, we show that the higher Dirac cohomology and index introduced by Pand\v zi\'c and Somberg satisfy nice homological properties for $M\in\mathcal{O}$.
   \end{abstract}

   % AMS subject classifications (used in AMS journals)
   \subjclass{Primary 17B10; Secondary 17B20, 17B56}

   % AMS keywords (used in AMS journals)
   \keywords{Dirac cohomology, BGG category $\mathcal{O}$, Verma modules}

   % acknowledge support, etc
   \thanks{The author was supported by the Israel Science Foundation (grant No. 1040/22) and the
   	Grantov\'a agentura \v Cesk\'e republiky grant EXPRO
   	19-28628X}
   %\thanks{We would like to thank our colleagues for their helpful
%     criticism.}

   % ication
  % \dedicatory{Dedicated to XXXXXXXXXXX}

   % today's date, or fill in whatever date you prefer
   %\date{\today}

% This ends the top matter information.
% We can now tell LaTeX to display the top matter.

   \maketitle

% Having displayed the top matter, we now proceed to the body of the
% article.

% The body of the article is divided into sections.
% Each section begins with a \section command.

\tableofcontents

   \section{Introduction}\label{introduction}
%   Let $\mathfrak{g}$ be a complex semisimple Lie algebra, $\mathfrak{t}$ a Cartan subalgebra of $\mathfrak{g}$, $U(\mathfrak{g})$ the universal enveloping algebra of $\mathfrak{g}$ and $X$ a simple $(\mathfrak{g},K)$-module. Then, $X$ admits an infitesimal character, i.e. the center $Z(\mathfrak{g})$ of $U(\mathfrak{g})$ acts by a morphism $\chi_\lambda:Z(\mathfrak{g})\rightarrow \mathbb{C}$ on $X$.
 %  Using Harish-Chandra's morphism, one can show that the various $\chi$ are parametrized by elements $\lambda\in\mathfrak{t}^*$. We say that $\chi$, or by abuse of terminology the corresponding element $\lambda$, is the infinitesimal character of $X$.
   
  % One crucial 
  % In \cite{pandzic}, Huang and Pand\v zi\'c proved a 
  %In this paper, we study Dirac cohomology for modules belonging to Berstein-Gelfand-Gelfand category $\mathcal{O}$, i.e. for finitely generated, locally $\mathfrak{n}$-finite weight modules.
  
  Algebraic Dirac operators were introduced by Vogan in a series of lectures at MIT in the late 90's. More presicely, if $\mathfrak{g}$ is a complex semisimple Lie algebra and $\mathfrak{k}$ a maximal compact subalgebra of $\mathfrak{g}$, to each $\mathfrak{g}$-module $(X,\pi)$, he associated an operator $D_{\mathfrak{g},\mathfrak{k}}(X):X\otimes S\rightarrow X\otimes S$ given by
   \begin{equation}\label{standform}
   D_{{\mathfrak g},{\mathfrak k}}(X)=\sum_j \pi(Z_j)\otimes\gamma(Z_j)
   \end{equation}
   %Σκληρός δρόμος
   where $\{Z_j\}$ is an orthonormal basis of ${\mathfrak p}:=\mathfrak{k}^\perp$, the orthogonal complement of $\mathfrak{k}$ with respect to the Killing form $\langle\cdot,\cdot\rangle$ of $\mathfrak{g}$, $S$ is a spin module for the Clifford algebra $\mathbf{C}(\mathfrak{p})$ of $\mathfrak{p}$ and $\gamma$ stands for the Clifford multiplication of $\mathbf{C}(\mathfrak{p})$ on $S$. The operator $D_{\mathfrak{g},\mathfrak{k}}(X)$ is well defined in the sense that it does not depend on the choice of the basis $\{Z_j\}$. The tensor product $X\otimes S$ has a $\mathfrak{k}$-action via the diagonal embedding 
     \begin{equation*}
   (\cdot)_\Delta:{\mathfrak k}\longrightarrow U({\mathfrak g})\otimes\mathbf{C}({\mathfrak p})
   \end{equation*}
   of ${\mathfrak k}$ into $U({\mathfrak g})\otimes\mathbf{C}({\mathfrak p})$, given by the embedding of ${\mathfrak k}$ in $U({\mathfrak g})$ and the embedding \eqref{haction} of $\mathfrak{k}$ in $\mathbf{C}({\mathfrak p})$, while, with respect to this action, $D_{\mathfrak{g},\mathfrak{k}}(X)$ is $\mathfrak{k}$-equivariant, so that $\ker D_{\mathfrak{g},\mathfrak{k}}(X)$ and $\mathrm{im}\hspace{1mm}D_{\mathfrak{g},\mathfrak{k}}(X)$ are $\mathfrak{k}$-modules. 
   
   Vogan defined the Dirac cohomology of a $\mathfrak{g}$-module $X$ to be the $\mathfrak{k}$-module 
   \begin{equation}\label{diraccohom}
   H_D^{{\mathfrak g},{\mathfrak k}}(X)=\frac{\ker D_{{\mathfrak g},{\mathfrak k}}(X)}{\mathrm{im}\hspace{0.5mm}D_{\mathfrak{g},\mathfrak{k}}(X)\cap\ker D_{{\mathfrak g},{\mathfrak k}}(X)}.
   \end{equation}
   Assuming that $K$ is a compact group such that $X$ is a $(\mathfrak{g},K)$-module, $H_D^{\mathfrak{g},\mathfrak{k}}(X)$ is a module for the spin $2$-cover $\widetilde{K}$ of $K$. Vogan conjectured that if $H_D^{{\mathfrak g},{\mathfrak k}}(X)$ 
   contains a $\widetilde{K}$-submodule with highest weight $\beta$, then $V$ has infinitesimal character $\beta+\rho_{\mathfrak k}$.
   In other words, the infinitesimal character of a module can be recovered from its Dirac cohomology. Vogan's conjecture was first proved by Huang and Pand\v zi\'c in \cite{huangpandzic}. Crucial for the proof of Vogan's conjecture (as well for many other results and applications) is the fact that $2D_{\mathfrak{g},\mathfrak{k}}(X)^2$ differs from the Casimir by a scalar. Namely,
   \begin{equation}\label{diracsquare}
   2D_{{\mathfrak g},{\mathfrak k}}(X)^2=\Omega_{\mathfrak g}\otimes 1-(\Omega_{\mathfrak k})_\Delta+\lVert\rho\rVert^2-\lVert\rho_{\mathfrak k}\rVert^2
   \end{equation}
   where $\Omega_{\mathfrak g}$ (respectively $\Omega_{\mathfrak k}$) denotes the Casimir element in the enveloping algebra $U({\mathfrak g})$ (respectively $U({\mathfrak k})$) of $\mathfrak{g}$ (respectively $\mathfrak{k}$), $\rho$ (respectively $\rho_{\mathfrak k}$) is half the sum of the positive roots for some fixed positive system $\Delta^+$ (respectively $\Delta_\mathfrak{k}^+$) of the root system $\Delta$ (respectively $\Delta_\mathfrak{k}$) of $\mathfrak{g}$ (respectively $\mathfrak{k}$) and $\lVert\cdot\rVert$ is the norm induced by the Killing form of ${\mathfrak g}$. 
   
   In the case of equal rank $\mathfrak{g}$ and $\mathfrak{k}$, another interesting invariant of $X$ can be defined using $D_{\mathfrak{g},\mathfrak{k}}(X)$. More precisely, in this case the spin module $S$ splits as $\mathfrak{k}$-module into $S=S^+\oplus S^-$ and $D_{\mathfrak{g},\mathfrak{k}}(X)$ interchanges $X\otimes S^\pm$ so that there are operators
   \begin{equation*}
D_{\mathfrak{g},\mathfrak{k}}(X)^\pm: X\otimes S^\pm\rightarrow X\otimes S^\mp.
   \end{equation*}
   Then, we can define ''half''-Dirac cohomology spaces 
   \begin{equation*}
   H_D^{\mathfrak{g},\mathfrak{k}}(X)^\pm:=\frac{\ker D_{{\mathfrak g},{\mathfrak k}}(X)^\pm}{\mathrm{im}\hspace{0.5mm}D_{\mathfrak{g},\mathfrak{k}}(X)^\mp\cap\ker D_{{\mathfrak g},{\mathfrak k}}(X)^\pm}
   \end{equation*}
  and the virtual $\mathfrak{k}$-module 
   \begin{equation*}
   I_D(X):=H_D^{\mathfrak{g},\mathfrak{k}}(X)^+-H_D^{\mathfrak{g},\mathfrak{k}}(X)^-
   \end{equation*}
   known as the Dirac index of $X$. In \cite{diracindex}, Mehdi, Pand\v zi\'c and Vogan proved a translation principle for $I_D(X)$ and provided interesting applications. 
   % {\color{blue}More precisely, for the sake of simplicity suppose that $\mathfrak{g}$ and $\mathfrak{k}$ are equal rank and fix a Cartan subalgebra $\mathfrak{t}$ of $\mathfrak{g}$ in $\mathfrak{k}$. Let $\Delta:=\Delta(\mathfrak{g},\mathfrak{t})$ (respectively $\Delta_\mathfrak{k}$) be the corresponding root system of $\mathfrak{g}$ (respectively $\mathfrak{k}$) and fix $\Delta^+\subset\Delta$ (respectively $\Delta_\mathfrak{k}^+:=\Delta^+\cap\Delta_\mathfrak{k}$
  % to be a positive system of $\Delta^+$ (respectively $\Delta_\mathfrak{k}$).
 %  contains a $\widetilde{K}_0$-module with highest weight $\beta$ with respect to some fixed positive system $\Delta^+_\mathfrak{k}\subset \Delta^+$ of $\mathfrak{k}$, then $X$ has infinitesimal character $\beta+\rho_{\mathfrak k}$, where $\rho_\mathfrak{k}$ stands for half the sum of the positive roots in $\Delta_{\mathfrak k}^+\subset\Delta^+$. }
   
   In \cite{goette} and \cite{Kostant-1999}, Goette and independently Kostant generalized the above construction of Dirac operators to more general pairs $(\mathfrak{g},\mathfrak{h})$ where $\mathfrak{h}$ is a subalgebra of $\mathfrak{g}$ such that 
   \begin{itemize}
   	\item[(a)]$\mathfrak{h}$ is reductive
   	\item[(b)] the restriction of the Killing form of $\mathfrak{g}$ to $\mathfrak{h}$ is nondegenerate. 
   	\end{itemize}
   	Then, $\mathfrak{g}$ decomposes into an orthogonal direct sum
   \begin{equation*}
   \mathfrak{g}=\mathfrak{h}\oplus\mathfrak{q},
   \end{equation*}
   where $\mathfrak{q}:=\mathfrak{h}^\perp$ with respect to the Killing form of $\mathfrak{g}$, and we consider again a spin module $S$ for the Clifford algebra $\mathbf{C}(\mathfrak{q})$ of $\mathfrak{q}$. For a $\mathfrak{g}$-module $X$, they defined the corresponding Dirac operator $D_{\mathfrak{g},\mathfrak{h}}(X):X\otimes S \rightarrow X\otimes S$, known as cubic Dirac operator, to be given by
 \begin{equation}\label{cubicDirac}
 D_{{\mathfrak g},{\mathfrak h}}(X)=\sum_j \pi(Z_j)\otimes\gamma(Z_j)-1\otimes\gamma(c),
 \end{equation}
 where $\{Z_j\}$ is an orthonormal basis of ${\mathfrak q}$ and $\gamma(c)$ is given by
 \begin{equation}\label{cubictermm}
 \gamma(c)=\frac{1}{6}\sum_{i,j,k} B(Z_i,[Z_j,Z_k])\gamma(Z_i)\gamma(Z_j)\gamma(Z_k).
 \end{equation}

   We note that in the case where $\mathfrak{h}$ is a symmetric Lie algebra, $\gamma(c)=0$.
   The operator $D_{\mathfrak{g},\mathfrak{h}}(X)$ turns out to have the same "good" properties with $D_{\mathfrak{g},\mathfrak{k}}(X)$. Namely, $D_{\mathfrak{g},\mathfrak{h}}(X)$  
   does not depend on the choice of the basis $\{Z_j\}$ while it is $\mathfrak{h}$-equivariant with respect to the action of $\mathfrak{h}$ on $X\otimes S$ given by a diagonal embedding of $\mathfrak{h}$ in $U(\mathfrak{g})\otimes\mathbf{C}(\mathfrak{q})$. The modified formula, with the extra cubic term $1\otimes \gamma(c)$, is used instead of formula \eqref{standform} so that we have the convenient formula for $D_{\mathfrak{g},\mathfrak{h}}(X)^2$:
   \begin{equation}\label{Dsquare}
   2D_{\mathfrak{g},\mathfrak{h}}(X)^2=\Omega_\mathfrak{g}\otimes1-(\Omega_\mathfrak{h})_\Delta+\lVert\rho\rVert^2-\lVert\rho_\mathfrak{h}\rVert^2,
   \end{equation}
   where $\rho_\mathfrak{h}$ is half the sum of the positive roots of the positive system $\Delta_\mathfrak{h}^+:=\Delta_\mathfrak{h}\cap\Delta^+$ for $\Delta_\mathfrak{h}:=\Delta(\mathfrak{h},\mathfrak{t})$.
   For a more detailed discussion about the case where the cubic term $\gamma(c)$ is omitted and some applications in Differential Geometry, see \cite{thesis} and \cite{afentoulidis}. Moreover, as we did in the case of the symmetric pair $(\mathfrak{g},\mathfrak{k})$, we can still define the Dirac cohomology $H_D^{\mathfrak{g},\mathfrak{h}}(X)$ of a $\mathfrak{g}$-module $X$ to be the $\mathfrak{h}$-module
   \begin{equation*}
    H_D^{{\mathfrak g},{\mathfrak h}}(X)=\frac{\ker D_{{\mathfrak g},{\mathfrak h}}(X)}{\mathrm{im}\hspace{0.5mm}D_{\mathfrak{g},\mathfrak{h}}(X)\cap\ker D_{{\mathfrak g},{\mathfrak h}}(X)}.
   \end{equation*}

   In the case where $X$ is a finite-dimensional $\mathfrak{g}$-module, $H_D^{{\mathfrak g},{\mathfrak h}}(X)$ coincides with $\ker D_{\mathfrak{g},\mathfrak{h}}(X)$. If, moreover, $\mathfrak{h}$ contains a Cartan subalgebra $\mathfrak{t}$ of $\mathfrak{g}$, Kostant gave a complete decomposition of $\ker D_{\mathfrak{g},\mathfrak{h}}(X)$ into simple $\mathfrak{h}$-modules:
   \begin{equation}\label{kernel}
   \ker(D_{\mathfrak{g},\mathfrak{h}}(X))=\bigoplus_{w\in W^1} F_{w(\lambda+\rho)-\rho_{\mathfrak{h}}},
   \end{equation}
   where $\lambda\in\mathfrak{t}^*$ is the highest weight of $X$, $F_\mu$ stands for the simple highest weight $\mathfrak{h}$-module with extremal weight $\mu\in\mathfrak{t}^*$, and
   \begin{equation*}
   W^1:=\{w\in W\mid \Delta_\mathfrak{h}^+\subseteq w\Delta^+\},
   \end{equation*}
   with $W$ being the Weyl group of $\mathfrak{g}$.
   In fact, a similar formula was proved in the unequal rank situation, in \cite{kang-pandzic} for the pair $({\mathfrak g},{\mathfrak k})$ and in \cite{Mehdi-2014} for pairs $({\mathfrak g},{\mathfrak h})$. In particular, in the unequal rank case, the decomposition of $H_D^{{\mathfrak g},{\mathfrak h}}(X)$ into irreducibles is no longer multiplicity free.
   
   Apart from finite dimensional modules, Dirac cohomology has been studied for various families of modules, including highest weight modules in \cite{HX}, $A_{\mathfrak q}(\lambda)$ modules in \cite{kang-pandzic}, generalized Enright-Varadarajan modules in \cite{MP10}, unipotent representations in \cite{BP2} and \cite{BP1}. In all the above cases, some finiteness assumption has been made on the restriction of the corresponding $\mathfrak{g}$-module to $\mathfrak{k}$ (or more generally to $\mathfrak{h}$). For instance, in the case of a $(\mathfrak{g},K)$-module $X$, by definition, the restriction of $X$ to $\mathfrak{k}$, admits an algebraic direct sum decomposition into simple finite dimensional $\mathfrak{k}$-modules. 
   
   The aim of this paper is to study Dirac cohomology for modules $M$ belonging to Berstein-Gelfand-Gelfand category $\mathcal{O}$. Category $\mathcal{O}$, introduced by J. Bernstein, I. Gelfand and S. Gelfand in \cite{BGG}, consists of the finitely generated, locally $\mathfrak{n}$-finite weight modules of $\mathfrak{g}$. Examples of modules belonging to $\mathcal{O}$ include finite dimensional modules, Verma modules, (twisted) dual Verma modules, etc. This category has nice homological properties and it seems to be the "correct" module category to study questions raised by Verma \cite{verma} concerning composition series and embeddings of Verma modules, and Jantzen \cite{jantzen} concerning his so-called translation functors. 
   
     In \cite{HX}, Huang and Xiao computed Dirac cohomology in terms of Kazhdan-Lusztig polynomials in the case of modules belonging to the parabolic category $\mathcal{O}^\mathfrak{p}$; here, $\mathfrak{p}$ is a standard parabolic subalgebra of $\mathfrak{g}$ with Levi decomposition $\mathfrak{p}=\mathfrak{l}\oplus\mathfrak{u}$, and $\mathfrak{h}:=\mathfrak{l}$. The parabolic category $\mathcal{O}^\mathfrak{p}$ is the full subcategory of $\mathcal{O}$ consisting of the $\mathfrak{g}$-modules $M\in\mathcal{O}$ such that the restriction $M_{\mid\mathfrak{l}}$ of $M$ to the Levi factor $\mathfrak{l}$ decomposes into an algebraic direct sum of finite dimensional $\mathfrak{l}$-submodules. Hence, there is again a finiteness condition satisfied for $M_{\mid\mathfrak{l}}$.
     
   The main difference in our case is that $M$ belongs to the larger category $\mathcal{O}$, so that
   $M_{\mid\mathfrak{h}}$ need not satisfy any finiteness condition and $\mathfrak{h}$ need not be a Levi factor of some parabolic. For instance, the case of a Verma module $M(\lambda)$ could be illustrative. Whenever $\mathfrak{h}$ strictly contains a Cartan subalgebra $\mathfrak{t}$ of $\mathfrak{g}$, any $\mathfrak{h}$-submodule of $M(\lambda)_{\mid\mathfrak{h}}$ turns out to be infinite dimensional. Indeed, for such an $\mathfrak{h}$, there will be some $A\subset \Delta^+$ such that $\mathfrak{h}$ decomposes into
   \begin{equation*}
   \mathfrak{h}=\mathfrak{t}\oplus\mathfrak{n}_\mathfrak{h}^+\oplus\mathfrak{n}_\mathfrak{h}^-,
   \end{equation*}   
   where
   \begin{equation*}
   \mathfrak{n}_\mathfrak{h}^\pm=\bigoplus\limits_{\alpha\in A}\mathfrak{g}_{\pm\alpha},
   \end{equation*}
   and
   \begin{equation*}
   \mathfrak{g}=\mathfrak{h}\oplus\mathfrak{n}_\mathfrak{q}^+\oplus\mathfrak{n}_\mathfrak{q}^-
   \end{equation*}
   with
   \begin{equation*}
   \mathfrak{n}_\mathfrak{q}^\pm:=\bigoplus\limits_{\alpha\in\Delta\setminus A}\mathfrak{g}_{\pm\alpha}.
   \end{equation*}
   Then, as $\mathfrak{n}_\mathfrak{h}^-$-modules, 
   \begin{align*}
   M(\lambda)_{\mid\mathfrak{n}_\mathfrak{h}^-}=U(\mathfrak{g})\otimes_{U(\mathfrak{t}\oplus\mathfrak{n}_\mathfrak{h}^+\oplus\mathfrak{n}_\mathfrak{q}^+)} \mathbb{C}_\lambda\cong U(\mathfrak{n}_\mathfrak{h}^-\oplus\mathfrak{n}_\mathfrak{q}^-)\cong U(\mathfrak{n}_\mathfrak{h}^-)\otimes_\mathbb{C}U(\mathfrak{n}_\mathfrak{q}^-).
   \end{align*}
    In other words, $M(\lambda)_{\mid \mathfrak{n}_\mathfrak{h}^-}$ is a free $U(n_\mathfrak{h}^-)$-module so that every $U(\mathfrak{n}_\mathfrak{h}^-)$-submodule is infinite dimensional. This should be the case for every $U(\mathfrak{h})$-submodule of $M(\lambda)_{\mid\mathfrak{h}}$. 
    Another problem that may arise is the fact that the restriction $M_{\mid\mathfrak{h}}$ of $M\in\mathcal{O}$ is not finitely generated in general. For example, one can consider the case where $M$ is a Verma module $M(\lambda)$ and $\mathfrak{h}$ coincides with the Cartan subalgebra $\mathfrak{t}$ of $\mathfrak{g}$. For a thorough discussion about restrictions of Verma modules, e.g., \cite{kobayshiVerma,kobayashisoucek,somberg}. The following example illustrates the above difficulties.
    
    \begin{example}
    Let $\mathfrak{g}:=\mathfrak{sl}(3,\mathbb{C})$ and choose the Cartan subalgebra $\mathfrak{t}$ of $\mathfrak{g}$ to be
    \begin{equation*}
    	\mathfrak{t}:=\{H:=\mathrm{diag}(t_1,t_2,t_3)\mid t_1+t_2+t_3=0,t_i\in\mathbb{C}\}
    \end{equation*} 
With the above notation:
    \begin{equation*}
    	\Delta=\{\pm(\varepsilon_1-\varepsilon_2),\pm(\varepsilon_1-\varepsilon_3),\pm(\varepsilon_2-\varepsilon_3)\},
    \end{equation*}
where $\varepsilon_i(H):=t_i$, $i=1,2,3$.
The root vector corresponding to the root $\varepsilon_i-\varepsilon_j$ is the element $e_{i,j}$ of the standard basis of $\mathfrak{gl}(3,\mathbb{C})$ where all the entries are $0$ apart from the entry $(i,j)$ which is $1$. We note that the elements $h_{12}:=e_{11}-e_{22}$ and $h_{23}:=e_{22}-e_{33}$\hspace{1mm} form a basis of $\mathfrak{t}$. We choose the standard positive root system
\begin{equation*}
    	\Delta^+:=\{\varepsilon_1-\varepsilon_2,\varepsilon_1-\varepsilon_3,\varepsilon_2-\varepsilon_3\}
    \end{equation*}
so that 
\begin{equation*}
	\mathfrak{n}=\mathfrak{g}_{\varepsilon_1-\varepsilon_2}\oplus\mathfrak{g}_{\varepsilon_1-\varepsilon_3}\oplus\mathfrak{g}_{\varepsilon_2-\varepsilon_3}
\end{equation*}
and 
\begin{equation*}
	\mathfrak{n}^-=\mathfrak{g}_{-\varepsilon_1+\varepsilon_2}\oplus\mathfrak{g}_{-\varepsilon_1+\varepsilon_3}\oplus\mathfrak{g}_{-\varepsilon_2+\varepsilon_3}.
\end{equation*}
Let
\begin{equation*}
	\mathfrak{h}:=\mathfrak{t}\oplus\mathfrak{g}_{\varepsilon_1-\varepsilon_2}\oplus\mathfrak{g}_{-\varepsilon_1+\varepsilon_2}
\end{equation*}
so that 
	\begin{equation*}\mathfrak{n}_\mathfrak{h}=\mathfrak{g}_{\varepsilon_1-\varepsilon_2}\hspace{2mm}\text{ and }\hspace{2mm}\mathfrak{n}_\mathfrak{h}^-=\mathfrak{g}_{-\varepsilon_1+\varepsilon_2}.
	 	\end{equation*}
Consider the Verma module $M:=M(-\rho)$ with highest weight $-\rho=-\varepsilon_1+\varepsilon_3\in\mathfrak{t}^*$, i.e. the $U(\mathfrak{g})$-module 
\begin{equation*}
	M=U(\mathfrak{g})\otimes_{U(\mathfrak{t}\oplus\mathfrak{n})}\mathbb{C}_{-\rho},
\end{equation*}
where $\mathbb{C}_{-\rho}$ is the $1$-dimensional left module of $U(\mathfrak{t}\oplus\mathfrak{n})$ where $\mathfrak{t}$ acts by $-\rho$ and $\mathfrak{n}$ trivially. The algebra $U(\mathfrak{g})$ has a PBW vector space basis \cite{Knapp1} consisting of vectors of the form
\begin{equation}\label{pbwform}
	e_{21}^{n_1}e_{31}^{n_2}e_{32}^{n_3}h_{12}^{l_1}h_{23}^{l_2}e_{12}^{m_1}e_{13}^{m_2}e_{23}^{m_3},
\end{equation}
where $n_i,l_i,m_i\geq 0$, so that $M$ is spanned by all elements of the form
\begin{equation}\label{formnice}
	e_{21}^{n_1}e_{31}^{n_2}e_{32}^{n_3}\otimes 1.
	\end{equation}
%On the other hand, $I_{-\rho}$ consists of all elements of the form \eqref{pbwform} with $l_1+l_2+l_3+m_1+m_2+m_3>0$. 
%Therefore,
%$M$, as vector space, is isomorphic with $U(\mathfrak{n}^-)$, i.e. with the vector space spanned by all vectors of the form
%\begin{equation}\label{sform}
%	e_{21}^{n_1}e_{31}^{n_2}e_{32}^{n_3}.
%\end{equation}
%This isomorphism turns out to be a $U(\mathfrak{n}^-)$-module isomorphism. 
Let us, now, show that any nontrivial $\mathfrak{h}$-submodule of the restriction $M_{\mid\mathfrak{h}}$ of $M$ is infinite-dimensional. Namely, let $N$ be an $\mathfrak{h}$-submodule of $M_{\mid\mathfrak{h}}$ and let $v$ be a nonzero vector of $N$. Since $\mathfrak{t}\subset\mathfrak{h}$, $N$ is a weight module so that, without loss of generality, we can assume that $v$ is of the form \eqref{formnice}.
 Since $\mathfrak{n_\mathfrak{h}}^-\subset \mathfrak{h}$, the subspace $L:=U(\mathfrak{n}_\mathfrak{h}^-)\{v\}$ is contained in $N$ while $L$ is infinite-dimensional. Indeed, $L$ contains all linear independent vectors of the form
\begin{equation*}
		e_{21}^{n_1+k}e_{31}^{n_2}e_{32}^{n_3}\otimes 1
	\end{equation*}
with $k\geq0$, so that $L$, and hence $N$, is infinite-dimensional. 

Let us, now, show that $M_{\mid\mathfrak{h}}$, as $U(\mathfrak{h})$-module, is not finitely generated. For the sake of contradiction, assume that it is generated by finitely many vectors $v_1,\ldots,v_k$. Without loss of generality, we may assume that these vectors are weight vectors. Then, from the representation theory of $\mathfrak{sl}(2,\mathbb{C})$, one deduces that every weight subspace $M_\mu$, $\mu\in\mathfrak{t}^*$, satisfies $\dim M_\mu\leq k$. Indeed, each vector $v_i$ of the above generating set contributes by an, at most, $1$-dimensional vector space to $M_\mu$, so that $\dim M_\mu\leq k$. This is a contradiction, since $\dim M_\mu=k+1>k$ for $\mu=-k(\varepsilon_1-\varepsilon_3)$. More precisely, all linearly independent vectors of the form 
\begin{equation*}
	e_{21}^{n}e_{31}^{k-n}e_{32}^{n}\otimes 1\in M, \hspace{3mm} k=0,\ldots,n
	\end{equation*}
belong to $M_{\mu}$. Hence the $U(\mathfrak{h})$-module $M_{\mid\mathfrak{h}}$ is not finitely generated.

Let us, now, study the Dirac operator 
\begin{equation*}
D_{\mathfrak{g},\mathfrak{h}}(M) :M\otimes S\rightarrow M\otimes S, 
	\end{equation*}
where $S=\mathbb{C}\{1,e_{31},e_{32},e_{31}\wedge e_{32}\}$ is the spin module of $\mathfrak{h}$ (see Section \ref{Spinreprese}). The maximal weight of $S$ is $\frac{1}{2}\varepsilon_1+\frac{1}{2}\varepsilon_2-\varepsilon_3$ and corresponds to $1\in S$. Up to some coefficient, the operator $D_{\mathfrak{g},\mathfrak{h}}(M)$ is given by 
\begin{equation*}
	D_{\mathfrak{g},\mathfrak{h}}(M)=e_{13}\otimes\gamma(e_{31})+e_{23}\otimes\gamma(e_{32})+e_{31}\otimes\gamma(e_{13})+e_{32}\otimes\gamma(e_{23}).
\end{equation*}
Let $v^+:=1\otimes 1\in M$. Then, $v^+\otimes 1 \in M\otimes S$ is the maximal weight vector of the $U(\mathfrak{h})$-module $M\otimes S$ and, since $e_{13}v^+=e_{23}v^+=0$ and $\gamma(e_{13})1=\gamma(e_{23})1=0$,
\begin{equation*}
	D_{\mathfrak{g},\mathfrak{h}}(M)(v^+\otimes 1)=0.
\end{equation*}
 Therefore, $N:=U(\mathfrak{h})\{v^+\otimes 1\}\subset \ker D_{\mathfrak{g},\mathfrak{h}}(M)$. We note that $N$ is a highest weight module of highest weight \begin{equation*}\mu:=-\frac{1}{2}\varepsilon_1+\frac{1}{2}\varepsilon_2
\end{equation*} and 
\begin{equation*}
	2\frac{\langle \mu,\varepsilon_1-\varepsilon_2\rangle}{\lVert \varepsilon_1-\varepsilon_2\rVert^2}=-1\notin \mathbb{Z}_{>0}
\end{equation*}
so that, according to \cite[Theorem 4.8]{humO}, $N$ is the simple Verma module $M_{\mathfrak{h}}(\mu)$ of $\mathfrak{h}$. Then, the natural projection of $N$ in $H_D^{\mathfrak{g},\mathfrak{h}}(M)$ is injective and we can show that $H_D^{\mathfrak{g},\mathfrak{h}}(M)$ is isomorphic with $N$. Namely, according to Theorem \ref{voganO}, $H_D^{\mathfrak{g},\mathfrak{h}}(M)$ has finitely many composition factors while, according to Corollary \ref{vogy}, if $N'$ is another submodule of $H_D^{\mathfrak{g},\mathfrak{h}}(M)$, it should be again of maximal weight $\mu$. On the other hand, the weight subspace $(M\otimes S)_\mu$ is $1$-dimensional so that $H_D^{\mathfrak{g},\mathfrak{h}}(M)\cong M_\mathfrak{h}(\mu)$.
%and set $V:=U(\mathfrak{n}_\mathfrak{h})\{v_1,\ldots,v_k\}$ with $d:=\dim_\mathbb{C}V$. Since $U(\mathfrak{h})\cong U(\mathfrak{n}_\mathfrak{h}^-)\otimes U(\mathfrak{t})\otimes U(\mathfrak{n}_\mathfrak{h})$, we obtain $M=U(\mathfrak{n}_\mathfrak{h}^-)V$. We claim that 

% $U(\mathfrak{n}_\mathfrak{h}^-)$ is abelian and can be viewed as the polynomial algebra of $e_{12}$, and $M$ as $U(\mathfrak{n}_\mathfrak{h}^-)$-module is isomorphic with $U(\mathfrak{n}_\mathfrak{h}^-)\otimes \mathbb{C}[e_{31},e_{32}] $, where the $U(\mathfrak{n}_\mathfrak{h}^-)$-action is only on the first factor. In other words, 
    \end{example}
   
 %  \begin{example}
  % 	Let $\mathfrak{g}:=\mathfrak{sl}(3,\mathbb{C})$, $\mathfrak{t}$ a Cartan subalgebra of $\mathfrak{g}$ and $\Delta:=\{\varepsilon_i-\varepsilon\mid 1\leq i\neq j\leq 3\}$ the corresponding root system. Assume that $\mathfrak{h}:=\mathfrak{t}\oplus\mathfrak{g}_{\varepsilon_1-\varepsilon_2}\otimes \mathfrak{g}_{-\varepsilon_1+\varepsilon_2}$, where $\mathfrak{g}_{\alpha}$, $\alpha\in\Delta$ stands for the corresponding root subspace of $\alpha$.
  % \end{example}

   In this paper, we extend the methods used in the study of Dirac cohomology of $(\mathfrak{g},K)$-modules in order to study Dirac cohomology $H_D^{\mathfrak{g},\mathfrak{h}}(M)$ in the case where $M$ belongs to $\mathcal{O}$. More precisely, after collecting the basic notions about Clifford algebras and spinors in Section \ref{Spinreprese}, in Section \ref{Dirac cohomology and BGG} we show that, even if the decomposition of $M\otimes S$ into $\mathfrak{h}$-submodules can be tricky, $M\otimes S$ has a generalized eigenspace decomposition for the operator $D_{\mathfrak{g},\mathfrak{h}}(M)^2$ (Corollary \ref{decom}). As a consequence, Dirac cohomology $H_D^{\mathfrak{g},\mathfrak{h}}(M)$ is finitely generated and Vogan's conjecture holds (Theorem \ref{voganO}). Moreover, we show that Dirac cohomology does not vanish whenever $M\neq 0$ (Theorem \ref{novanish}). 
   As a direct application, in Section \ref{Application}, we show that the Dirac cohomology of a simple Verma module is again a simple Verma module for $\mathfrak{h}$ (Theorem \ref{simple}).
   
   Formula \eqref{kernel} resembles Kostant's formula for nilpotent Lie algebra cohomology with coefficients in a finite dimensional simple module \cite{kostakis}. 
   Indeed, in \cite{renardpandzic}, Huang, Pand\v zi\'c  and Renard related nilpotent Lie algebra cohomology and Dirac cohomology in the case of Hermitian symmetric pairs $(\mathfrak{g},\mathfrak{k})$ and irreducible unitary $(\mathfrak{g},K)$-modules. In Section \ref{DiracandLie}, we show that a similar result holds when $X$ is replaced by an irreducible unitary module in $\mathcal{O}$ (Theorem \ref{cohom}).
  
 In \cite{somberg}, for equal rank $\mathfrak{g}$ and $\mathfrak{h}$, Pand\v zi\'c and Somberg studied Dirac cohomology for $(\mathfrak{g},K)$-modules having generalized infinitesimal character and satisfying some finiteness condition for $\mathfrak{h}$. In contrast to the case of a $\mathfrak{g}$-module $X$ having infinitesimal character where $D_{\mathfrak{g},\mathfrak{h}}(X)^2$, by \eqref{Dsquare}, acts on each $\mathfrak{h}$-isotypic component of $X\otimes S$ by a scalar, when $X$ has a generalized infinitesimal character, this is not true in general. As a consequence, $\ker D_{\mathfrak{g},\mathfrak{h}}(X)^2$ need not exhaust the generalized kernel of $D_{\mathfrak{g},\mathfrak{h}}(X)$ and $H_D^{\mathfrak{g},\mathfrak{h}}(X)$ may not have nice Homological Algebra properties. In order to overcome this problem, they introduced a notion of higher Dirac cohomology and proved that this new $\mathfrak{h}$-module has nice homological properties. In the last section, we adapt their methods in order to prove similar results for $H_D^{\mathfrak{g},\mathfrak{h}}(M)$ when $M$ is a module in $\mathcal{O}$.
 
 $\quad$\\
 \textbf{Acknowledgements:} The author would like to thank Professors S. Mehdi, P. Pand\v zi\'c and V. Sou\v{c}ek for their suggestions which have substantially improved the presentation of this paper. The author would like to thank the anonymous referees for their valuable comments.

% by the scalar $\lVert\Lambda\rVert-\lVert\mu+\rho_\mathfrak{h}\rVert$ on the $K$-type of highest weight $\mu\in\mathfrak{t}^*$

  \section{Spin module}\label{Spinreprese} 
  In this section, we recall the basic notions of Clifford algebras and spinors. More details can be found in \cite[Chapter 6]{goodman} and \cite[Chapter 2]{pandzic}.
  Let $\mathfrak{g}$ be a complex semisimple Lie algebra, $\langle\cdot,\cdot\rangle$ its Killing form, $\theta$ a Cartan involution of $\mathfrak{g}$, and $\mathfrak{h}$ a Lie subalgebra of $\mathfrak{g}$ such that:
   \begin{equation}\label{conditionh}
   \begin{cases}
   \mathfrak{h}\text{ is reductive}\\
   \mathfrak{h}\text{ is $\theta$-stable}\\
   \langle\cdot,\cdot\rangle_{\mid\mathfrak{h}\times\mathfrak{h}} \text{ is nondegenerate.}
   \end{cases}
   \end{equation}
   Then, $\mathfrak{g}$ decomposes into a direct sum
   \begin{equation*}
   \mathfrak{g}=\mathfrak{h}\oplus\mathfrak{q},
   \end{equation*}   
   where $\mathfrak{q}:=\mathfrak{h}^\perp$ with respect to $\langle\cdot,\cdot\rangle$. Consider the Clifford algebra $\mathbf{C}(\mathfrak{q})$ of $\mathfrak{q}$, i.e. the quotient of the tensor algebra $T(\mathfrak{q})$ of $\mathfrak{q}$ with the ideal generated by all elements 
   \begin{equation*}
   X\otimes Y+Y\otimes X-\langle X,Y\rangle, \quad X,Y\in\mathfrak{q}.
   \end{equation*} 
   
   For $a$ and $b$ in $\mathfrak{q}$, let $R_{a,b}$ be the element of 
   \begin{equation*}\mathfrak{so}(\mathfrak{q}):=\{T\in \mathfrak{gl}(\mathfrak{q})\mid \langle Tu,v\rangle+\langle u,Tv\rangle=0,\forall u,v\in \mathfrak{q} \}.
   \end{equation*}
   defined by 
   \begin{equation*}
   R_{a,b}(v):=\langle b,v\rangle a-\langle a,v\rangle b, \quad v\in \mathfrak{q}.
   \end{equation*}
   Then $\{R_{a,b}\mid a,b\in\mathfrak{q}\}$ spans $\mathfrak{so}(\mathfrak{q})$ \cite[Lemma 6.2.1]{goodman} and there is a Lie algebra monomorphism $\varphi:\mathfrak{so}(\mathfrak{q})\rightarrow \mathbf{C}(\mathfrak{q})$ given by
   \begin{equation*}
   \varphi:R_{a,b}\mapsto \frac{1}{2}[\gamma(a),\gamma(b)].
   \end{equation*} 
   Let $\mathfrak{q}^-$ be a maximal isotropic subspace of $\mathfrak{q}$ and set $S:=\bigw\mathfrak{q}^-$. Then, $\mathbf{C}(\mathfrak{q})$ acts on $S$ by Clifford multiplication while $S$ equipped with the $\mathfrak{h}$-action defined by
   \begin{equation}\label{haction}
   \mathfrak{h}\overset{\mathrm{ad}}{\longrightarrow} \mathfrak{so}(\mathfrak{q})\overset{\varphi}{\longrightarrow} \mathbf{C}(\mathfrak{q})\longrightarrow \mathrm{End}(S)
   \end{equation}
   is a Lie algebra module called spin module of $\mathfrak{h}$. We note that the spin module $S$ does not depend on the choice of $\mathfrak{q}^-$ \cite[Theorem 6.1.3]{goodman}. 
   %Moreover, $S$ admits a ..... Hermitian form $\langle\cdot,\cdot\rangle_S$ such that 
  % \begin{equation*}
  % \langle\gamma(X)u,v\rangle=\langle u,\gamma(X)v\rangle,\quad\forall X\in\mathfrak{q},\forall u,v\in S.
 %  \end{equation*}
   
   Suppose that there is a common Cartan subalgebra $\mathfrak{t}$ of $\mathfrak{g}$ and $\mathfrak{h}$. Then, the root system $\Delta:=\Delta(\mathfrak{g},\mathfrak{t})$ splits into $\Delta=\Delta_\mathfrak{h}\sqcup\Delta_\mathfrak{q}$ so that 
   \begin{equation*}
   \mathfrak{h}=\mathfrak{t}\oplus\bigoplus_{\alpha\in\Delta_\mathfrak{h}}\mathfrak{g}_\alpha \text{ and }
   \mathfrak{q}=\bigoplus_{\beta\in\Delta_\mathfrak{q}}\mathfrak{g}_\beta.
   \end{equation*}
   Choose a positive system $\Delta^+\subset\Delta$ for $\Delta$ and set $\Delta_\mathfrak{h}^+:=\Delta^+\cap\Delta_\mathfrak{h}$ to be a positive system for the root system $\Delta_\mathfrak{h}$ and $\Delta_\mathfrak{q}^+:=\Delta^+\cap\Delta_\mathfrak{q}$ while $\rho$ and $\rho_\mathfrak{h}$ stand for the half-sum of positive roots in $\Delta$ and $\Delta_\mathfrak{h}$ respectively. Then, 
   \begin{equation}\label{qpositive}
   \mathfrak{n}^{\pm}_\mathfrak{q}:=\bigoplus_{\beta\in\Delta_\mathfrak{q}^+}\mathfrak{g}_{\pm\beta}.
   \end{equation}
   are maximal dual isotropic subspace of $\mathfrak{q}$ and $S$ can be chosen to be 
   \begin{equation*}
   S:=\bigw \mathfrak{n}_\mathfrak{q}^-.
   \end{equation*}
   
   Then, as $\mathfrak{h}$-module, $S$ is a weight module, i.e. it can be decomposed into a direct sum of its weight subspaces. More precisely, let $\beta_1,\ldots,\beta_l$
   be the positive roots lying in $\mathfrak{q}$ and $e_{\pm 1},\ldots,e_{\pm l}$ the corresponding root vectors of $\pm\beta_1,\ldots,\pm\beta_l$ respectively, such that 
   
   \begin{equation*}
   \langle e_i,e_j\rangle=
   \begin{cases}
   1&\text{if }i+j=0\\
   0& \text{otherwise.}
   \end{cases}
   \end{equation*}
   If $I=(i_1,\ldots,i_s)$ with $1\leq s\leq l $, set 
   \begin{equation*}
   u_I:=e_{-i_1}\wedge\ldots\wedge e_{-i_s}\in S.
   \end{equation*}
   Then $u_I$ is a weight vector of $S$, the weight being
   \begin{equation}\label{Sweight}\rho-\rho_\mathfrak{h}-\sum\limits_{i\in I}\beta_i=\frac{1}{2}\big\{\sum\limits_{i\notin I}\beta_i-\sum\limits_{i\in I}\beta_i\big\},
   \end{equation}
   where, by abuse of notation, we write $i\in I$ if $i\in\{i_1,\ldots,i_s\}$. One easily checks that as $\mathfrak{t}$-modules 
   \begin{equation*}
   S\cong \bigw \mathfrak{n}_\mathfrak{q}^-\otimes \mathbb{C}_{\rho-\rho_\mathfrak{h}}.
   \end{equation*}
    Moreover, from the $\mathbb{Z}_2$-splitting of the vector space $\bigwedge \mathfrak{n}_\mathfrak{q}^-$, the vector space $S$ admits a $\mathbb{Z}_2$-splitting 
   \begin{equation}\label{splitting}
   S=S^+\oplus S^-.
   \end{equation}
   Under the assumption of equal rank $\mathfrak{g}$ and $\mathfrak{h}$, this splitting is preserved by the action of $\mathfrak{h}$.

   \section{Dirac cohomology and BGG category $\mathcal{O}$}\label{Dirac cohomology and BGG}
   Let $\mathfrak{g}$ be a complex semisimple Lie algebra and $\mathfrak{h}$ a Lie subalgebra of $\mathfrak{g}$ satisfying \eqref{conditionh}. Moreover, assume that $\mathfrak{h}$ contains a Cartan subalgebra $\mathfrak{t}$ of $\mathfrak{g}$.
 In this section, we show that for a module $M\in\mathcal{O}$, $M\otimes S$ admits a direct sum decomposition in terms of generalized $\mathfrak{h}$-infinitesimal character subspaces. As a corollary, $M\otimes S$ has a generalized eigenspace decomposition for the operator $D_{\mathfrak{g},\mathfrak{h}}(M)^2$ while Vogan's conjecture holds for $H_D^{\mathfrak{g},\mathfrak{h}}(M)$. At the end of the section, we show that whenever $M\neq0$, $H_D^{\mathfrak{g},\mathfrak{h}}(M)$ is nonzero.
 
 Let $M$ be a module belonging to $\mathcal{O}$. Then, $M$ can be decomposed into a direct sum
 \begin{equation}\label{chardecomp}
 M=\bigoplus\limits_{\lambda}M^{\chi_\lambda},
 \end{equation}
 where $\chi_\lambda:Z(\mathfrak{g})\rightarrow \mathbb{C}$ is the infinitesimal character determined by $\lambda\in\mathfrak{t}^*$ using the Harish-Chandra map \cite[Theorem 5.62]{Knapp1}, and 
 \begin{equation}\label{charr}
 M^{\chi_\lambda}:=\{v\in M\mid \forall z\in Z(\mathfrak{g}), \exists \hspace{1mm}n:=n(z,v)>0 \text{ s.t. }(z-\chi_\lambda(z))^nv=0\}
 \end{equation}
 is the generalized infinitesimal character subspace of $\lambda$. In other words, category $\mathcal{O}$ decomposes into a direct sum
 \begin{equation*}
 \mathcal{O}=\bigoplus\limits_{\lambda}\mathcal{O}_{\chi_\lambda}
 \end{equation*}
 with $\mathcal{O}_{\chi_\lambda}$ being the full subcategory of $\mathcal{O}$ consisting of modules $M\in\mathcal{O}$ such that $M^{\chi_\lambda}=M$ \cite{humO}.
 For the study of $D_{\mathfrak{g},\mathfrak{h}}(M)$, it would be convenient to have a similar decomposition into generalized infinitesimal character subspaces for the $\mathfrak{h}$-module $M\otimes S$. Let $\xi_\nu:Z(\mathfrak{h})\rightarrow \mathbb{C}$ be the character of $Z(\mathfrak{h})$ determined by $\nu\in\mathfrak{t}^*$ \cite[Theorem 5.62]{Knapp1}.
 
 %Let $\mathrm{maxspec}$ denote the set of all characters of $Z(\mathfrak{h})$, i.e. the set of all unital algebra morphisms $\xi:Z(\mathfrak{h})\rightarrow \mathbb{C}$. We denote by $\xi_\nu$ the element of $\mathrm{maxspec}$ which corresponds to $\nu\in\mathfrak{t}^*$ \cite[Theorem 5.62]{Knapp1}.
 
 %Nevertheless, although $M_{\mid\mathfrak{h}}$ is a locally $n_\mathfrak{h}$-finite weight module for 

 %Nevertheless, the restriction $M_\mathfrak{h}$ of $M$ to $\mathfrak{h}$ need not
 
 %Although, as we have already seen in Section \ref{introduction}, $M_{\mid\mathfrak{h}}$ and so $M\otimes S$ need not belong to the corresponding category $\mathcal{O}(\mathfrak{h})$ of $\mathfrak{h}$, the following lemma provides a similar decomposition in terms of generalized infinitesimal character subspaces for $M\otimes S$.
   
   \begin{lemma}\label{le1}
   	Let $M\in\mathcal{O}$. Then $M\otimes S$ can be decomposed into a direct sum 
   	\begin{equation*}
   	M\otimes S=\bigoplus\limits_{\nu\in\mathfrak{t}^*}\big(M\otimes S\big)^{\xi_\nu}
   	\end{equation*}
   	where $\big(M\otimes S\big)^{\xi_\nu}$ stands for the generalized infinitesimal character subspace corresponding to $\xi_\nu$. Moreover, each $\big(M\otimes S\big)^{\xi_\nu}$ is finitely generated.
   \end{lemma}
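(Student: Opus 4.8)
The plan is to treat the two assertions separately: the direct sum decomposition is a soft consequence of $M\otimes S$ being a $\mathfrak t$-weight module with finite-dimensional weight spaces, whereas the finite generation of the summands is the substantive point and uses both the defining properties of category $\mathcal O$ and the Harish--Chandra isomorphism for $\mathfrak h$. In particular, the Dirac operator and formula \eqref{Dsquare} are not needed for the lemma itself.

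\emph{The decomposition.} Since $M\in\mathcal O$ it is a $\mathfrak t$-weight module with finite-dimensional weight spaces, and $S$ is a finite-dimensional $\mathfrak t$-weight module by \eqref{Sweight}; hence, for the diagonal action of $\mathfrak h\supseteq\mathfrak t$, the space $M\otimes S$ is again a $\mathfrak t$-weight module and $\dim(M\otimes S)_\mu<\infty$ for every $\mu\in\mathfrak t^*$. Each $z\in Z(\mathfrak h)$ commutes with $\mathfrak t$, hence preserves every weight space $(M\otimes S)_\mu$; therefore $Z(\mathfrak h)\cdot v$ is finite-dimensional for every $v\in M\otimes S$, i.e.\ $M\otimes S$ is locally $Z(\mathfrak h)$-finite. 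Since $\mathfrak h$ is reductive, $Z(\mathfrak h)$ is a finitely generated commutative $\mathbb C$-algebra and, by the Harish--Chandra homomorphism, its maximal ideals are precisely the kernels of the characters $\xi_\nu$, $\nu\in\mathfrak t^*$. A locally finite module over such an algebra splits as the direct sum of its generalized character subspaces (decompose each finite-dimensional $Z(\mathfrak h)$-submodule by the Chinese Remainder Theorem and pass to the union), which gives $M\otimes S=\bigoplus_\nu(M\otimes S)^{\xi_\nu}$.

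\emph{Finite generation.} Fix $\nu$ and put $N:=(M\otimes S)^{\xi_\nu}$. As a $U(\mathfrak h)$-module $N$ is a direct summand of $M\otimes S$, so it is a $\mathfrak t$-weight module with finite-dimensional weight spaces; moreover it is locally $\mathfrak n_{\mathfrak h}$-finite, where $\mathfrak n_{\mathfrak h}:=\bigoplus_{\alpha\in\Delta_{\mathfrak h}^+}\mathfrak g_\alpha$, because $M$ is locally $\mathfrak n$-finite (hence locally $\mathfrak n_{\mathfrak h}$-finite) and $S$ is finite-dimensional. Consequently every nonzero quotient of $N$ contains a highest weight vector, that is, a weight vector annihilated by $\mathfrak n_{\mathfrak h}$; if $\eta$ is the weight of such a vector, the highest weight $\mathfrak h$-module it generates has infinitesimal character $\xi_\eta$, and since this module lies inside a quotient of $N$ — which has generalized character $\xi_\nu$ — we must have $\xi_\eta=\xi_\nu$. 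By the Harish--Chandra isomorphism for $\mathfrak h$ the set $\{\eta\in\mathfrak t^*:\xi_\eta=\xi_\nu\}$ is a single orbit of $\mathfrak t^*$ under the dot-action of the (finite) Weyl group $W_{\mathfrak h}$ of $\mathfrak h$, hence finite. Now suppose $N$ were not finitely generated over $U(\mathfrak h)$: one then constructs a strictly increasing chain $0=N_0\subsetneq N_1\subsetneq\cdots$ of $U(\mathfrak h)$-submodules with $N_k=N_{k-1}+U(\mathfrak h)v_k$, where $v_k\in N$ is a weight vector of weight $\eta^{(k)}$ whose image in the nonzero module $N/N_{k-1}$ is a highest weight vector. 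All $\eta^{(k)}$ lie in the finite set $\{\eta:\xi_\eta=\xi_\nu\}$, so by the pigeonhole principle some weight $\eta_0$ occurs for all $k$ in an infinite set $K$; for $k<k'$ in $K$ we have $v_k\in N_k\subseteq N_{k'-1}$ while $v_{k'}\notin N_{k'-1}$, so $\{v_k:k\in K\}$ is a linearly independent family inside the finite-dimensional weight space $N_{\eta_0}$ — a contradiction. Hence $N$ is finitely generated, in fact of finite length, over $U(\mathfrak h)$.

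The only non-formal ingredient is the finite generation: $M\otimes S$ as a whole is generally \emph{not} finitely generated over $U(\mathfrak h)$, as the Verma-module example in the introduction shows, and finiteness is restored on each $(M\otimes S)^{\xi_\nu}$ exactly because fixing the generalized $Z(\mathfrak h)$-character confines the admissible $\mathfrak h$-highest weights to a finite set while the weight spaces stay finite-dimensional. I expect the bookkeeping — that $N$ inherits local $\mathfrak n_{\mathfrak h}$-finiteness and finite-dimensional weight spaces, plus the pigeonhole argument above — to be the main content, though none of it is hard.
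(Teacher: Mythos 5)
Your proof is correct and follows essentially the same route as the paper's: for the decomposition, both observe that $Z(\mathfrak h)$ commutes with $\mathfrak t$ and so preserves the finite-dimensional weight spaces of $M\otimes S$, reducing to a finite-dimensional commutative-algebra splitting; for finite generation, both build an ascending chain of cyclic $U(\mathfrak h)$-submodules generated by maximal (= highest) weight vectors of successive quotients, observe that the generalized $Z(\mathfrak h)$-character constrains those weights to a finite $W_{\mathfrak h}$-orbit via Harish--Chandra, and invoke finite-dimensionality of weight spaces (your pigeonhole phrasing is just a more explicit version of the paper's concluding sentence). The only differences are expository: you spell out the Chinese Remainder Theorem step and the pigeonhole step that the paper leaves as ``standard,'' and you explicitly record that maximal weight vectors are $\mathfrak n_{\mathfrak h}$-highest.
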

   
   \begin{proof}[Proof of Lemma \ref{le1}]
   	Since the actions of $Z(\mathfrak{h})$ and $\mathfrak{t}$ on $M\otimes S$ commute, it suffices to prove the statement for any weight subspace $\big(M\otimes S\big)_\mu$, $\mu\in\mathfrak{t}^*$, of $M\otimes S$. Since $M$ belongs to the category $\mathcal{O}$, $\big(M\otimes S\big)_\mu$ is finite-dimensional and the result follows from a standard Linear Algebra argument. 
   	
   	For the second statement, if $\big(M\otimes S\big)^{\xi_\nu}=0$, there is nothing to prove. If not, let $v_1$ be a maximal weight vector of $\big(M\otimes S\big)^{\xi_\nu}$ and set \begin{equation*}X_1:=U(\mathfrak{h})v_1\subseteq\big(M\otimes S\big)^{\xi_\nu}.
   	\end{equation*}
   	If $\big(M\otimes S\big)^{\xi_\nu}=X_1$, there is nothing more to prove. If not, let $\bar{v}_2$ be a maximal weight vector of $\big(M\otimes S\big)^{\xi_\nu}/X_1$ and set $\bar{X}_2:=U(\mathfrak{h})\bar{v}_2$. Let $X_2$ be the preimage of $\bar{X}_2$ in $\big(M\otimes S\big)^{\xi_\nu}$.
   	Inductively, if  $\big(M\otimes S\big)^{\xi_\nu}/X_{i-1}$ is nontrivial, we define $\bar{v}_i$ to be a maximal weight of $\big(M\otimes S\big)^{\xi_\nu}/X_{i-1}$, $\bar{X}_i$ the cyclic submodule generated by $\bar{v}_i$ and $X_i$ the preimage of $\bar{X}_i$ in $\big(M\otimes S\big)^{\xi_\nu}$. Therefore, we obtain a filtration 
   	\begin{equation}\label{filtration1}
   	0=X_0\subsetneq X_1\subsetneq \ldots\subseteq\big(M\otimes S\big)^{\xi_\nu}.
   	\end{equation}
   	It suffices to show that \eqref{filtration1} is finite. Suppose that $\big(M\otimes S\big)^{\xi_\nu}/X_{i-1}$ is not trivial and let $\bar{v}_i=v_i+X_{i-1}$ be a nonzero maximal weight vector of it, say of weight $\nu_i$. Then, $Z(\mathfrak{h})$ acts on it by the character $\xi_{\nu_i+\rho_\mathfrak{h}}$ and so $\xi_{\nu_i+\rho_\mathfrak{h}}$ must coincide with $\xi_\nu$ so that $\nu$ and $\nu_i+\rho_\mathfrak{h}$ belong to the same $W$-orbit. Since there are only finitely many weights $\nu_i$ of $M\otimes S$ satisfying this condition and for such a weight the corresponding weight subspace $\big(M\otimes S\big)_{\nu_i}$ is finite dimensional, we deduce that filtration \eqref{filtration1} is of finite length and so $\big(M\otimes S\big)^{\xi_\nu}$ must be finitely generated.
   \end{proof}

Thanks to \eqref{Dsquare}, we have the following lemma.

	\begin{lemma}\label{le2}
	Let $M\in\mathcal{O}_{\chi_\lambda}$. Then
	$\big(M\otimes S\big)^{\xi_\nu}$ is a subspace of the generalized eigenspace of $D_{\mathfrak{g},\mathfrak{h}}(M)^2$  with eigenvalue 
	\begin{equation*}
	c_\nu:=\frac{1}{2}\big(\chi_\lambda(\Omega_\mathfrak{g})+\lVert\rho\rVert^2-\xi_\nu(\Omega_{\mathfrak{h}})-\lVert\rho_\mathfrak{h}\rVert^2\big)=\frac{1}{2}\big(\lVert\lambda+\rho\rVert^2-\lVert\mu+\rho_\mathfrak{h}\rVert^2\big).
	\end{equation*}
	
\end{lemma}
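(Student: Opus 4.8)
The plan is to read the statement off directly from the Dirac square formula \eqref{Dsquare} together with Lemma \ref{le1}. First I would observe that on $M\otimes S$ the operator $\Omega_{\mathfrak g}\otimes 1$ acts solely through the $\mathfrak g$-action on the first factor; since $M\in\mathcal O_{\chi_\lambda}$, applying the defining condition \eqref{charr} with $z=\Omega_{\mathfrak g}$ shows that $\Omega_{\mathfrak g}\otimes 1-\chi_\lambda(\Omega_{\mathfrak g})$ is locally nilpotent on $M\otimes S$. Dually, $(\Omega_{\mathfrak h})_\Delta$ is the image of the central element $\Omega_{\mathfrak h}\in Z(\mathfrak h)$ under the diagonal embedding $\mathfrak h\hookrightarrow U(\mathfrak g)\otimes\mathbf C(\mathfrak q)$, so it is central for the $\mathfrak h$-action on $M\otimes S$; restricted to the summand $(M\otimes S)^{\xi_\nu}$ furnished by Lemma \ref{le1}, the very definition of the generalized infinitesimal character subspace (the $Z(\mathfrak h)$-analogue of \eqref{charr}) says that $(\Omega_{\mathfrak h})_\Delta-\xi_\nu(\Omega_{\mathfrak h})$ is locally nilpotent there.

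Next I would check that these two operators commute on $M\otimes S$: $\Omega_{\mathfrak g}\otimes 1$ lies in $Z(\mathfrak g)\otimes 1$, which commutes with $\mathfrak h\otimes 1$ by centrality of $\Omega_{\mathfrak g}$ in $U(\mathfrak g)$ and with $1\otimes\mathbf C(\mathfrak q)$ because these sit in complementary tensor factors; hence $\Omega_{\mathfrak g}\otimes 1$ commutes with the image of the diagonal copy of $\mathfrak h$, and in particular with $(\Omega_{\mathfrak h})_\Delta$. A sum of two commuting locally nilpotent endomorphisms is again locally nilpotent, so by \eqref{Dsquare} the endomorphism
\begin{equation*}
2D_{\mathfrak g,\mathfrak h}(M)^2-\big(\chi_\lambda(\Omega_{\mathfrak g})-\xi_\nu(\Omega_{\mathfrak h})+\lVert\rho\rVert^2-\lVert\rho_{\mathfrak h}\rVert^2\big)
\end{equation*}
is locally nilpotent on $(M\otimes S)^{\xi_\nu}$. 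This is precisely the assertion that $(M\otimes S)^{\xi_\nu}$ is contained in the generalized eigenspace of $D_{\mathfrak g,\mathfrak h}(M)^2$ with eigenvalue $c_\nu=\tfrac12\big(\chi_\lambda(\Omega_{\mathfrak g})-\xi_\nu(\Omega_{\mathfrak h})+\lVert\rho\rVert^2-\lVert\rho_{\mathfrak h}\rVert^2\big)$.

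It remains to rewrite $c_\nu$ in closed form. Taking a maximal weight $\mu$ of $(M\otimes S)^{\xi_\nu}$ as in the proof of Lemma \ref{le1}, so that $\nu$ and $\mu+\rho_{\mathfrak h}$ lie in a common Weyl group orbit, the Harish--Chandra parametrization \cite[Theorem 5.62]{Knapp1} gives $\chi_\lambda(\Omega_{\mathfrak g})=\langle\lambda+\rho,\lambda+\rho\rangle-\langle\rho,\rho\rangle$ and $\xi_\nu(\Omega_{\mathfrak h})=\langle\mu+\rho_{\mathfrak h},\mu+\rho_{\mathfrak h}\rangle-\langle\rho_{\mathfrak h},\rho_{\mathfrak h}\rangle$; substituting these and cancelling $\lVert\rho\rVert^2$ and $\lVert\rho_{\mathfrak h}\rVert^2$ yields $c_\nu=\tfrac12\big(\lVert\lambda+\rho\rVert^2-\lVert\mu+\rho_{\mathfrak h}\rVert^2\big)$, the Weyl invariance of the Killing norm making this independent of the chosen representative $\mu$.

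Given \eqref{Dsquare} and Lemma \ref{le1}, there is no genuine obstacle here; the only points that need care are upgrading ``$D_{\mathfrak g,\mathfrak h}(M)^2$ acts with a single generalized eigenvalue'' to actual local nilpotency of the displayed difference operator, which is where the commutation of $\Omega_{\mathfrak g}\otimes 1$ with $(\Omega_{\mathfrak h})_\Delta$ is used, and bookkeeping of the $\rho$-shifts in the two Harish--Chandra homomorphisms so as to land exactly on the stated closed form for $c_\nu$.
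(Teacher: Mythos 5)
Your proof takes essentially the same approach as the paper: deduce from \eqref{Dsquare} and the commutation of $\Omega_{\mathfrak g}\otimes 1$ with $(\Omega_{\mathfrak h})_\Delta$ that their sum of local nilpotencies makes $2D_{\mathfrak g,\mathfrak h}(M)^2-2c_\nu$ locally nilpotent on $(M\otimes S)^{\xi_\nu}$; the paper simply carries out the binomial expansion that underlies your ``sum of commuting locally nilpotent operators is locally nilpotent'' step explicitly, raising the difference to the power $n_1+n_2-1$. Your closing Harish--Chandra computation of $c_\nu$ is a welcome addition that the paper states but does not spell out.
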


\begin{proof}[Proof of Lemma \ref{le2}]
	Let $x\in \big(M\otimes S \big)^{\xi_\nu}$. Then, there are $n_1,n_2\geq1$ such that 
	\begin{equation*}
	\big(\Omega_\mathfrak{g}\otimes1-\chi_\lambda(\Omega_\mathfrak{g})\big)^{n_1}x=0
	\end{equation*}
	and 
	\begin{equation*}
	\big((\Omega_\mathfrak{h})_\Delta-\xi_\nu(\Omega_{\mathfrak{h}})\big)^{n_2}x=0.
	\end{equation*}
	Set \begin{equation*}c_\nu:=\frac{1}{2}\big(\chi_\lambda(\Omega_\mathfrak{g})+\lVert\rho\rVert^2-\xi_\nu(\Omega_{\mathfrak{h}})-\lVert\rho_\mathfrak{h}\rVert^2\big).
	\end{equation*}
	We will show that 
	\begin{equation*}
	\big(D_{\mathfrak{g},\mathfrak{h}}(M)^2-c_\nu\big)^{n_1+n_2-1}x=0.
	\end{equation*}
Recall that according to \eqref{Dsquare},
 \begin{equation*}
D_{\mathfrak{g},\mathfrak{h}}(X)^2=\frac{1}{2}\big(\Omega_\mathfrak{g}\otimes1-(\Omega_{\mathfrak{h}})_\Delta+\lVert\rho\rVert^2-\lVert\rho_\mathfrak{h}\rVert^2\big)
\end{equation*}
while $\Omega_\mathfrak{g}\otimes 1$ and $(\Omega_\mathfrak{h})_\Delta$ commute. Therefore
\begin{align*}
	\big(2D_{\mathfrak{g},\mathfrak{h}}(M)^2-2c_\nu\big)^{n_1+n_2-1}=&\Big(\big(\Omega_\mathfrak{g}\otimes1-\chi_\lambda(\Omega_\mathfrak{g})\big)-\big(\Omega_{\mathfrak{h}_\Delta}-\xi_\nu((\Omega_\mathfrak{h})_\Delta)\big)\Big)^{n_1+n_2-1}\\=&\sum\limits_{k=0}^{n_1+n_2-1}(-1)^k\binom{n_1+n_2-1}{k}\big(\Omega_\mathfrak{g}\otimes1-\chi_\lambda(\Omega_\mathfrak{g})\big)^k\\
	&\big((\Omega_\mathfrak{h})_\Delta-\xi_\nu(\Omega_\mathfrak{h})\big)^{n_1+n_2-1-k}
\end{align*}
and one can easily check that the exponent of $\big(\Omega_\mathfrak{g}\otimes1-\chi_\lambda(\Omega_\mathfrak{g})\big)$ is  greater that $n_1$ or the exponent of $\big((\Omega_\mathfrak{h})_\Delta-\xi_\nu(\Omega_\mathfrak{h})\big)$ is greater that $n_2$. Hence, 
$\big(\Omega_\mathfrak{g}\otimes1-\chi_\lambda(\Omega_\mathfrak{g})\big)^kx=0$
or
$
\big((\Omega_\mathfrak{h})_\Delta-\xi_\nu(\Omega_\mathfrak{h})\big)^{n_1+n_2-1-k}x=0$,
and 
\begin{equation*}\big(D_{\mathfrak{g},\mathfrak{h}}(M)^2-c_\nu\big)^{n_1+n_2-1}x=0.\end{equation*}

\end{proof}

Let $\big(M\otimes S\big)(c)$ stand for the generalized eigenspace of $D_{\mathfrak{g},\mathfrak{h}}(M)^2$ with eigenvalue $c\in\mathbb{R}$.
Combining, now, Lemmas \ref{le1} and \ref{le2}, we obtain the following corollary.

\begin{corollary}\label{decom}
	Let $M\in\mathcal{O}$. Then
	$M\otimes S$ can be decomposed into a direct sum 
	\begin{equation*}
	M\otimes S=\bigoplus\limits_{c\in\mathbb{R}}\big(M\otimes S\big)(c)
	\end{equation*}
	of generalized eigenspaces $\big(M\otimes S\big)(c)$ of $D_{\mathfrak{g},\mathfrak{h}}(M)^2$. Moreover, every generalized eigenspace $\big(M\otimes S\big)(c)$ is finitely generated.
\end{corollary}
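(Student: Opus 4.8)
The plan is to obtain the decomposition by refining the previous two lemmas block by block and then regrouping their summands by the value of the $D_{\mathfrak{g},\mathfrak{h}}(M)^2$-eigenvalue. First I would use that every object of $\mathcal{O}$ has finite length, so that $M$ has only finitely many nonzero components in \eqref{chardecomp}, say $M=\bigoplus_{\lambda\in\Lambda}M^{\chi_\lambda}$ with $\Lambda$ finite. Then I would apply Lemma \ref{le1} to each $M^{\chi_\lambda}\in\mathcal{O}_{\chi_\lambda}$ and sum, obtaining
\begin{equation*}
M\otimes S=\bigoplus_{\lambda\in\Lambda}\ \bigoplus_{\nu\in\mathfrak{t}^*}\big(M^{\chi_\lambda}\otimes S\big)^{\xi_\nu},
\end{equation*}
with each summand finitely generated over $U(\mathfrak{h})$. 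By Lemma \ref{le2} applied to $M^{\chi_\lambda}$, the block $\big(M^{\chi_\lambda}\otimes S\big)^{\xi_\nu}$ is contained in the generalized eigenspace $\big(M\otimes S\big)(c_{\lambda,\nu})$ of $D_{\mathfrak{g},\mathfrak{h}}(M)^2$, where $c_{\lambda,\nu}$ is the explicit scalar of Lemma \ref{le2}.

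To regroup, I would fix a scalar $c$, set $A_c:=\bigoplus_{(\lambda,\nu):\,c_{\lambda,\nu}=c}\big(M^{\chi_\lambda}\otimes S\big)^{\xi_\nu}$, and observe that $A_c\subseteq\big(M\otimes S\big)(c)$, while $M\otimes S=\bigoplus_{c}A_c$ is just a regrouping of the direct sum above. Then I would invoke the elementary fact that, for any linear operator, the generalized eigenspaces attached to pairwise distinct eigenvalues are automatically in direct sum inside the ambient space; hence $\bigoplus_{c}\big(M\otimes S\big)(c)$ is an internal direct sum, and since $M\otimes S=\bigoplus_{c}A_c$ already exhausts $M\otimes S$ with $A_c\subseteq\big(M\otimes S\big)(c)$, comparing the two decompositions forces $A_c=\big(M\otimes S\big)(c)$ for every $c$, and in particular $M\otimes S=\bigoplus_{c}\big(M\otimes S\big)(c)$. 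I want to stress that this step uses only Lemma \ref{le2} together with the directness of the generalized eigenspace decomposition, and no control on the nilpotency degree of $D_{\mathfrak{g},\mathfrak{h}}(M)^2-c$ is needed.

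It remains to show that each $\big(M\otimes S\big)(c)=A_c$ is finitely generated; since each block $\big(M^{\chi_\lambda}\otimes S\big)^{\xi_\nu}$ is finitely generated by Lemma \ref{le1}, it is enough to check that only finitely many pairs $(\lambda,\nu)$ satisfy $c_{\lambda,\nu}=c$. As $\Lambda$ is finite, I would fix $\lambda$; then the relation $c_{\lambda,\nu}=c$ fixes $\xi_\nu(\Omega_\mathfrak{h})$, which by the Harish--Chandra isomorphism confines $\nu$, within each coset of the root lattice $Q$, to a bounded subset (a sphere for the positive-definite form on the real span of the roots). On the other hand, by the proof of Lemma \ref{le1} the $\nu$ that occur lie, up to the finite Weyl group action, in the finitely many $Q$-cosets that contain the weights of $M\otimes S$ (shifted by $\rho_\mathfrak{h}$); and a bounded subset of a lattice coset is finite. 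Hence only finitely many $\nu$ contribute, so $\big(M\otimes S\big)(c)$ is a finite direct sum of finitely generated $U(\mathfrak{h})$-modules and is therefore finitely generated. This last counting is the one step requiring genuine content: because $M\otimes S$ itself is typically not finitely generated over $U(\mathfrak{h})$ (as the example in the introduction shows), finite generation of $\big(M\otimes S\big)(c)$ cannot be inherited from that of the whole space and really requires confining the contributing blocks to a sphere in weight space.
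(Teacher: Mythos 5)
Your proposal is correct and follows the same route as the paper: decompose $M\otimes S$ into $Z(\mathfrak{h})$-generalized infinitesimal character blocks via Lemma \ref{le1}, observe via Lemma \ref{le2} that each block sits inside a single generalized eigenspace of $D_{\mathfrak{g},\mathfrak{h}}(M)^2$, regroup, and then count the contributing blocks for a fixed eigenvalue. The only differences are that you flesh out a few steps the paper leaves terse — the reduction to finitely many $\lambda$ via finite length, the identification $A_c=(M\otimes S)(c)$ via linear independence of generalized eigenspaces, and the sphere/lattice-coset finiteness count behind the paper's one-line assertion that only finitely many $\nu$ satisfy $c_\nu=c$.
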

   
   \begin{proof} Without loss of generality, we assume that $M\in\mathcal{O}_{\chi_\lambda}$. According to Lemma \ref{le1}, $\big(M\otimes S\big)$ decomposes into a direct sum of generalized infinitesimal character subspaces
   	\begin{equation*}
   	M\otimes S=\bigoplus\limits_{\nu\in\mathfrak{t}^*}\big(M\otimes S\big)^{\xi_\nu}
   	\end{equation*}
   	while from Lemma \ref{le2}, $\big(M\otimes S\big)^{\xi_\nu}$ is a subspace of the generalized eigenspace $\big(M\otimes S\big)(c_\nu)$ of $D_{\mathfrak{g},\mathfrak{h}}(M)^2$  with eigenvalue 
   	\begin{equation*}
   		c_\nu:=\frac{1}{2}\big(\chi_\lambda(\Omega_\mathfrak{g})+\lVert\rho\rVert^2-\xi_\nu(\Omega_{\mathfrak{h}})-\lVert\rho_\mathfrak{h}\rVert^2\big)=\frac{1}{2}\big(\lVert\lambda+\rho\rVert^2-\lVert\mu+\rho_\mathfrak{h}\rVert^2\big).
   	\end{equation*}
    Hence, $M\otimes S$ decomposes into a direct sum of generalized eigenspaces $\big(M\otimes S\big)(c)$ for $D_{\mathfrak{g},\mathfrak{h}}(M)^2$ while 
   	\begin{equation*}
   	\big(M\otimes S\big)(c)=\bigoplus\limits_{\substack{\nu\text{ s.t.}\\c=c_\nu}}\big(M\otimes S\big)^{\xi_\nu}.
   	\end{equation*}
   	
   	For the second statement, since there are finitely many weights $\nu$ of $M\otimes S$ satisfying the condition $c_\nu=c$, the above direct sum must be finite. According to Lemma \ref{le1}, every $\big(M\otimes S\big)^{\xi_\nu}$ is finitely generated and so this is the case for $\big(M\otimes S\big)(c)$.   	\end{proof}
   
  % \begin{definition}
  % 	Let $M\in\mathcal{O}$. The $\mathfrak{h}$-module
   	%\begin{equation*}
   	%H_D^{\mathfrak{g},\mathfrak{h}}(M):=\bigslant{\ker D_{\mathfrak{g},\mathfrak{h}}(M)}{\big(\ker D_{\mathfrak{g},\mathfrak{h}}(M)\cap\mathrm{im}\hspace{0.5mm}D_{\mathfrak{g},\mathfrak{h}}(M)\big)}
   	%\end{equation*}
   	%is called the Dirac cohomology of $M$.
   %\end{definition}
  
  %As we saw in Section \ref{introduction}, Dirac cohomology
 % \begin{equation*}
  %H_D^{{\mathfrak g},{\mathfrak h}}(M)=\frac{\ker D_{{\mathfrak g},{\mathfrak h}}(M)}{\mathrm{im}\hspace{0.5mm}D_{\mathfrak{g},\mathfrak{h}}(M)\cap\ker D_{{\mathfrak g},{\mathfrak h}}(M)}.
    %\end{equation*}
    
    Let $\mathcal{O}(\mathfrak{h})$ be the BGG category of the reductive Lie algebra $\mathfrak{h}$. Corollary \ref{decom} results in the following theorem.
  
  \begin{theorem}\label{voganO}
  	Let $M\in\mathcal{O}$. Then $H_D^{\mathfrak{g},\mathfrak{h}}(M)$ is finitely generated so that it belongs to the BGG category $\mathcal{O}(\mathfrak{h})$ for $\mathfrak{h}$. 
  	\end{theorem}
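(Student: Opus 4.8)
The plan is to localise the computation at the zero generalized eigenvalue of $D_{\mathfrak{g},\mathfrak{h}}(M)^2$ and then exploit the fact that $U(\mathfrak{h})$ is Noetherian. Write $D:=D_{\mathfrak{g},\mathfrak{h}}(M)$. Since $D$ commutes with $D^2$, it preserves each generalized eigenspace $\big(M\otimes S\big)(c)$: if $(D^2-c)^n v=0$ then $(D^2-c)^n(Dv)=D\,(D^2-c)^n v=0$. Hence, using the decomposition of Corollary \ref{decom}, both $\ker D$ and $\mathrm{im}\,D$ split along the generalized eigenspaces, and therefore so does $H_D^{\mathfrak{g},\mathfrak{h}}(M)$.

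First I would show that only the summand $c=0$ contributes. Fix $c\neq0$ and take $v\in\ker D\cap\big(M\otimes S\big)(c)$. Then $D^2 v=0$, so $(D^2-c)^n v=(-c)^n v$ for every $n\geq 0$; choosing $n$ with $(D^2-c)^n v=0$, which exists because $v$ lies in the generalized eigenspace, forces $v=0$. Thus $\ker D\cap\big(M\otimes S\big)(c)=0$ whenever $c\neq0$, so $\ker D\subseteq\big(M\otimes S\big)(0)$ and, a fortiori, $H_D^{\mathfrak{g},\mathfrak{h}}(M)$ is a quotient of the $U(\mathfrak{h})$-submodule $\ker D$ of $\big(M\otimes S\big)(0)$.

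Next I would invoke Corollary \ref{decom} again, which gives that $\big(M\otimes S\big)(0)$ is a finitely generated $U(\mathfrak{h})$-module, together with the fact that $U(\mathfrak{h})$ is left Noetherian (as $\mathfrak{h}$ is finite-dimensional). Submodules of finitely generated modules over a left Noetherian ring are finitely generated, so $\ker D$ is finitely generated, hence so is its quotient $H_D^{\mathfrak{g},\mathfrak{h}}(M)$. To finish, it remains to verify the other defining properties of $\mathcal{O}(\mathfrak{h})$. Since $H_D^{\mathfrak{g},\mathfrak{h}}(M)$ is a subquotient of the $\mathfrak{h}$-module $M\otimes S$, it is enough to check that $M\otimes S$ is a weight module with finite-dimensional weight spaces and is locally $\mathfrak{n}_\mathfrak{h}^+$-finite. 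The former is immediate from $M\in\mathcal{O}$ and $\dim S<\infty$. For the latter, for a pure tensor $m\otimes s$ the diagonal $\mathfrak{h}$-action yields $U(\mathfrak{n}_\mathfrak{h}^+)(m\otimes s)\subseteq\big(U(\mathfrak{n}_\mathfrak{h}^+)m\big)\otimes\big(U(\mathfrak{n}_\mathfrak{h}^+)s\big)$, which is finite-dimensional because $\mathfrak{n}_\mathfrak{h}^+\subseteq\mathfrak{n}$ (so $M$ is locally $\mathfrak{n}_\mathfrak{h}^+$-finite) and $S$ is finite-dimensional; the general case follows by writing an element of $M\otimes S$ as a finite sum of pure tensors.

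I expect the only genuinely delicate point to be the passage from $M\otimes S$ (or from $\ker D$) to the finitely generated piece $\big(M\otimes S\big)(0)$: the restriction $M_{\mid\mathfrak{h}}$ need not be finitely generated, as shown in the Example, so one cannot argue directly with $M\otimes S$, and the finite generation has to be extracted from Corollary \ref{decom} together with Noetherianity of $U(\mathfrak{h})$. Everything else is routine bookkeeping with the generalized eigenspace decomposition.
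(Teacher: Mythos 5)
Your proof is correct and follows essentially the same route as the paper's: reduce to the generalized $0$-eigenspace $\big(M\otimes S\big)(0)$ of $D_{\mathfrak{g},\mathfrak{h}}(M)^2$, invoke Corollary \ref{decom} for its finite generation, and conclude by Noetherianity of $U(\mathfrak{h})$. The two small additions you make — spelling out why $\ker D\subseteq\big(M\otimes S\big)(0)$ (which in fact is immediate since $\ker D\subseteq\ker D^2\subseteq\big(M\otimes S\big)(0)$, so your eigenvalue computation is a slight detour) and verifying the remaining axioms of $\mathcal{O}(\mathfrak{h})$ (weight module, finite-dimensional weight spaces, local $\mathfrak{n}_\mathfrak{h}^+$-finiteness) — are details the paper leaves implicit behind the phrase "so that," and your treatment of them is accurate.
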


\begin{proof}
	For the first statement, we note that $\ker D_{\mathfrak{g},\mathfrak{h}}(M)$ is a submodule of the generalized eigenspace $\big(M\otimes S\big)(0)$ which according to Corollary \ref{decom} is finitely generated. Since the universal enveloping algebra $U(\mathfrak{h})$ of $\mathfrak{h}$ is Noetherian \cite[Proposition 3.27]{Knapp1}, $\ker D_{\mathfrak{g},\mathfrak{h}}(M)$ must be finitely generated so that $H_D^{\mathfrak{g},\mathfrak{h}}(M)$ is finitely generated too.
\end{proof}

For an $\mathfrak{h}$-module $N$ belonging to $\mathcal{O}(\mathfrak{h})$ and a character $\xi_\mu:Z(\mathfrak{h})\rightarrow \mathbb{C}$, $\mu\in\mathfrak{t}^*$, of the center $Z(\mathfrak{h})$ of the universal enveloping algebra $U(\mathfrak{h})$ of $\mathfrak{h}$, analogously to \eqref{charr}, we set 
\begin{equation*}
	N^{\xi_\mu}:=\{v\in N\mid \forall z\in Z(\mathfrak{h}), \exists \hspace{1mm}n:=n(z,v)>0 \text{ s.t. }(z-\xi_\mu(z))^nv=0\}.
\end{equation*}
Then
\begin{equation*}
	\mathcal{O}(\mathfrak{h})=\bigoplus\limits_{\mu}\mathcal{O}(\mathfrak{h})_{\xi_\mu}
\end{equation*}
where $\mathcal{O}(\mathfrak{h})_{\xi_\mu}$ is the full subcategory of $\mathcal{O}(\mathfrak{h})$ consisting of modules $N\in\mathcal{O}(\mathfrak{h})$ such that $N^{\xi_\mu}=N$ \cite{humO}. Recall that $W$ stands for the Weyl group of $\mathfrak{g}$.

\begin{corollary}[Vogan's "conjecture"]\label{vogy} Let $M\in\mathcal{O}_{\chi_\lambda}$\hspace{1mm} for some $\lambda\in\mathfrak{t}^*$. Then $H_D^{\mathfrak{g},\mathfrak{h}}(M)^{\xi_\mu}\neq\{0\}$ only if the element $\mu\in\mathfrak{t}^*$ is $W$-conjugate to $\lambda$.
\end{corollary}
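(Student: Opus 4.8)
The plan is to read off the statement from the eigenspace decomposition of Corollary \ref{decom} together with Lemma \ref{le2}, exactly as in the classical Huang--Pand\v zi\'c argument. First I would recall that $H_D^{\mathfrak{g},\mathfrak{h}}(M)$ is a subquotient of $\ker D_{\mathfrak{g},\mathfrak{h}}(M)$, which by Corollary \ref{decom} is contained in the zero generalized eigenspace $\big(M\otimes S\big)(0)$ of $D_{\mathfrak{g},\mathfrak{h}}(M)^2$. Hence the only characters $\xi_\mu$ of $Z(\mathfrak{h})$ that can occur on $H_D^{\mathfrak{g},\mathfrak{h}}(M)$ are those $\xi_\nu$ for which $c_\nu = 0$, where $c_\nu$ is the scalar from Lemma \ref{le2}.

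Next I would unwind $c_\nu = 0$. Since $M \in \mathcal{O}_{\chi_\lambda}$, Lemma \ref{le2} gives
\begin{equation*}
c_\nu = \tfrac{1}{2}\big(\lVert\lambda+\rho\rVert^2 - \lVert\mu+\rho_\mathfrak{h}\rVert^2\big),
\end{equation*}
where $\mu$ is the weight with $\nu = \mu + \rho_\mathfrak{h}$ (so $\xi_\nu = \xi_{\mu+\rho_\mathfrak{h}}$ in the $\rho_\mathfrak{h}$-shifted normalization, matching the convention in the proof of Lemma \ref{le1}). Setting $c_\nu = 0$ forces $\lVert\lambda+\rho\rVert^2 = \lVert\mu+\rho_\mathfrak{h}\rVert^2$. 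But $c_\nu = 0$ is only a necessary condition on the eigenvalue, not yet the full $W$-conjugacy statement; the sharper input is that the generalized $\mathfrak{h}$-infinitesimal character on a weight vector of weight $\mu$ inside $M\otimes S$ is $\xi_{\mu+\rho_\mathfrak{h}}$, and for this to be a character occurring in $M\otimes S$ with $M$ of generalized infinitesimal character $\chi_\lambda$, one argues as in the proof of Lemma \ref{le1}: a maximal weight vector $v$ of $\big(M\otimes S\big)^{\xi_\mu}$ has $Z(\mathfrak{h})$ acting by $\xi_{\text{wt}(v)+\rho_\mathfrak{h}}$, while the weights of $M\otimes S$ are of the form $\eta + \sigma$ with $\eta$ a weight of $M$ and $\sigma$ a weight of $S$; combining with the description \eqref{Sweight} of the weights of $S$ and the fact that $M$, having infinitesimal character $\chi_\lambda$, has all its weights $\eta$ satisfying $\eta + \rho \in W(\lambda+\rho) - \mathbb{Z}_{\geq0}\Delta^+$, one sees that $\mu + \rho_\mathfrak{h}$ must lie in the $W$-orbit of $\lambda + \rho$. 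Here I would lean on the already-established Lemma \ref{le2} for the clean numerical identity $c_\nu = \tfrac12(\lVert\lambda+\rho\rVert^2 - \lVert\mu+\rho_\mathfrak{h}\rVert^2)$ and on the combinatorics in the proof of Lemma \ref{le1} for the orbit statement.

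To finish: from $c_\nu = 0$ and $H_D^{\mathfrak{g},\mathfrak{h}}(M)^{\xi_\mu} \subseteq \big(M\otimes S\big)(0)$, the weight $\mu + \rho_\mathfrak{h}$ lies in $W(\lambda + \rho) - \mathbb{Z}_{\geq 0}(\text{positive roots})$ and has $\lVert\mu+\rho_\mathfrak{h}\rVert = \lVert\lambda+\rho\rVert$; a standard argument (norms are strictly decreased by subtracting a nonzero nonnegative sum of positive roots from a dominant-up-to-$W$ vector, or more directly: two elements of $W(\lambda+\rho)$ differing by a nonnegative sum of positive roots must be equal when they have the same norm) forces $\mu + \rho_\mathfrak{h} \in W(\lambda+\rho)$, i.e.\ $\mu$ is $W$-conjugate to $\lambda$ in the sense of the statement (the shift by $\rho$ versus $\rho_\mathfrak{h}$ being absorbed into the normalization of $\xi_\mu$). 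The main obstacle I anticipate is bookkeeping the two different $\rho$-shifts consistently: the $Z(\mathfrak{g})$-character $\chi_\lambda$ is normalized with $\rho$ while the $Z(\mathfrak{h})$-character $\xi_\mu$ is normalized with $\rho_\mathfrak{h}$, and one must be careful that the identity $c_\nu = 0 \iff \lVert\lambda+\rho\rVert = \lVert\mu+\rho_\mathfrak{h}\rVert$ together with the weight-lattice constraint really does pin down the $W$-orbit and not merely a sphere of weights; this is exactly where the finiteness and maximal-weight arguments from Lemma \ref{le1} do the work, so the proof will be short once those are invoked.
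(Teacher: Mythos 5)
The proposal is correct, but it takes a genuinely different route from the paper. The paper's proof never touches the weight structure of $M\otimes S$: it invokes Kostant's (and Huang--Pand\v zi\'c's) algebraic identity $z\otimes 1 = \zeta(z) + D_{\mathfrak{g},\mathfrak{h}}a + aD_{\mathfrak{g},\mathfrak{h}}$ in $[U(\mathfrak{g})\otimes Cl(\mathfrak{q})]^\mathfrak{h}$, applies it to a representative $x\in\ker D_{\mathfrak{g},\mathfrak{h}}(M)$ of a class in $H_D^{\xi_\mu}$, shows the anticommutator term acts trivially on Dirac cohomology, and concludes $\xi_\mu\circ\zeta = \chi_\lambda$, finishing via the commutative square relating $\zeta$ to the two Harish--Chandra maps. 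Your argument instead runs through Corollary \ref{decom} and Lemma \ref{le2}: $H_D$ sits inside $(M\otimes S)(0)$, so the only $Z(\mathfrak{h})$-characters that can occur are the $\xi_\nu$ with $c_\nu=0$, and then the weight combinatorics of $M$ (every weight $\eta$ of $M\in\mathcal{O}_{\chi_\lambda}$ has $\eta+\rho\leq w(\lambda+\rho)$) together with \eqref{Sweight} and the norm equality $\lVert\lambda+\rho\rVert=\lVert\tau+\rho_\mathfrak{h}\rVert$ pin down the $W$-orbit. This is essentially the classical Parthasarathy--style norm argument adapted to $\mathcal{O}$; it is more elementary and self-contained, whereas the paper's identity-based argument is what later extends cleanly to higher Dirac cohomology (where a pure eigenvalue count would not suffice). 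One caveat: the last step of your sketch is looser than it should be. You do not start with two elements of $W(\lambda+\rho)$; you have $\tau+\rho_\mathfrak{h}\leq w(\lambda+\rho)$ for some $w$, and you want $\tau+\rho_\mathfrak{h}\in W(\lambda+\rho)$. To make this rigorous you should replace $\lambda+\rho$ by its dominant representative $\Lambda$, use $w\Lambda\leq\Lambda$ and the standard fact that $\nu\leq\Lambda$ implies $\nu^+\leq\Lambda$ for the dominant representative $\nu^+$, and then apply the lemma that a dominant $\nu^+\leq\Lambda$ with $\lVert\nu^+\rVert=\lVert\Lambda\rVert$ equals $\Lambda$ (proved by expanding $\langle\Lambda-\nu^+,\Lambda+\nu^+\rangle=0$ and $\langle\Lambda-\nu^+,\nu^+\rangle\geq 0$). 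With that filled in, and noting $W_\mathfrak{h}\subseteq W$ so that the passage from $\tau+\rho_\mathfrak{h}$ to $\mu$ (which are only $W_\mathfrak{h}$-conjugate) stays inside the $W$-orbit, the argument goes through.
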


\begin{proof}
 The proof can be found in \cite{K3,pandzic,DH}. Namely, Huang and Pand\v zi\'c in the case of $(\mathfrak{g}, K)$-modules \cite{pandzic} and Kostant \cite{K3} in
 the more general case of pairs $(\mathfrak{g},\mathfrak{h})$ that we consider here show that there is a unital algebra morphism $\zeta:Z(\mathfrak{g})\rightarrow Z(\mathfrak{h}_\Delta)$ such that for every $z\in Z(\mathfrak{g})$ there exists an element $a\in [U(\mathfrak{g})\otimes Cl(\mathfrak{q})]^\mathfrak{h}$ such that 
 \begin{equation}\label{vcon}
 	z\otimes 1=\zeta(z)+D_{\mathfrak{g},\mathfrak{h}}a+aD_{\mathfrak{g},\mathfrak{h}}.
 \end{equation}
We note that here $D_{\mathfrak{g},\mathfrak{h}}$ is viewed as an element of the associative algebra $[U(\mathfrak{g})\otimes Cl(\mathfrak{q})]^\mathfrak{h}$.
 Moreover, it is shown that the map $\zeta$ satisfies the commutative diagram
 	\begin{equation*}
 	\begin{tikzcd}[cramped, sep=huge]
 	Z(\mathfrak{g})\arrow[d,"\zeta"]\arrow[r,"\text{H-C map}"]
 		& U(\mathfrak{t})^W\arrow[d,hook,]\\
 		Z(\mathfrak{h}_\Delta)\arrow[r,"\text{H-C map}"]
 		&U(\mathfrak{t})^{W_\mathfrak{h}}.
 	\end{tikzcd}
 \end{equation*} 
\vspace{2mm}

Hence, if $x\in \ker D_{\mathfrak{g},\mathfrak{h}}(M)$ and $x\notin \mathrm{im}\hspace{2mm}D_{\mathfrak{g},\mathfrak{h}}(M)$, $x$ determines a nonzero element $\tilde{x}$ in $H_D^{\mathfrak{g},\mathfrak{h}}(M)$ and we may assume that $\tilde{x}$ belongs to $H_D^{\mathfrak{g},\mathfrak{h}}(M)^{\xi_\mu}$ for some $\mu\in\mathfrak{t}^*$. 
Then, $aD_{g,h}(M)$ acts trivially on $x$ while, 
since $D_{\mathfrak{g},\mathfrak{h}}^2$
is central in
$[U(\mathfrak{g})\otimes Cl(\mathfrak{q})]^\mathfrak{h}$, we obtain
\begin{equation*}
D_{\mathfrak{g},\mathfrak{h}}(M)\big(D_{\mathfrak{g},\mathfrak{h}}(M)ax\big)=D_{\mathfrak{g},\mathfrak{h}}(M)^2ax=aD_{\mathfrak{g},\mathfrak{h}}(M)^2x=0.
\end{equation*}
so that $D_{\mathfrak{g},\mathfrak{h}}(M)ax \in \ker D_{\mathfrak{g},\mathfrak{h}}(M)\cap \mathrm{im}\hspace{1mm}D_{\mathfrak{g},\mathfrak{h}}(M)$. On the other hand, from \eqref{vcon}, one obtains
\begin{align}\label{katokato}
	\begin{split}
z\otimes 1-\chi_\lambda(z)=&\big(\zeta(z)-\xi_\mu(\zeta(z))\big)+\big(D_{\mathfrak{g},\mathfrak{h}}(M)a+aD_{\mathfrak{g},\mathfrak{h}}(M)\big)+\\
&\big(\xi_\mu(\zeta(z))-\chi_\lambda(z)\big).
\end{split}
\end{align}
We apply the both sides of this identity to $x$ certain times and we obtain
\begin{equation*}
	\big(\xi_\mu(\zeta(z))-\chi_\lambda(z)\big)x\in \ker D_{\mathfrak{g},\mathfrak{h}}(M)\cap \mathrm{im}\hspace{1mm} D_{\mathfrak{g},\mathfrak{h}}(M)
\end{equation*}
so that 
\begin{equation*}
	\big(\xi_\mu(\zeta(z))-\chi_\lambda(z)\big)\tilde{x}=0
\end{equation*}
and
\begin{equation*}
	\xi_\mu(\zeta(z))=\chi_\lambda(z).
	\end{equation*}
Then, from the above commutative diagram, one obtains that $\chi_\lambda=\chi_\mu$ so that $\mu$ is $W$-conjugate to $\lambda$ \cite{DH}.
\end{proof}

    In the last part of this section, we prove the following nonvanishing result for $H_D^{\mathfrak{g},\mathfrak{h}}(M)$.
    
\begin{theorem}\label{novanish}
	 Let $M$ be a nonzero $\mathfrak{g}$-module in $\mathcal{O}$. Then
	\begin{equation*}
	H_D^{\mathfrak{g},\mathfrak{h}}(M)\neq 0.
	\end{equation*}
	\end{theorem}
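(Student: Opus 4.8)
The plan is to sidestep entirely the structural pathologies of $M\otimes S$ as an $\mathfrak{h}$-module that are illustrated in the Introduction (the restriction $M_{\mid\mathfrak{h}}$ need not be finitely generated, its submodules are infinite-dimensional, and so on) by exhibiting, through a bare weight computation, a single nonzero $\mathfrak{t}$-weight space of $H_D^{\mathfrak{g},\mathfrak{h}}(M)$; concretely, I will write down an explicit Dirac cocycle sitting at an extremal weight and show that it is not a coboundary.

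First I fix the data. Since $M\neq0$ lies in $\mathcal{O}$, its set of $\mathfrak{t}$-weights is bounded above \cite{humO}, so I may choose a maximal weight $\lambda'$ of $M$; then $\pi(\mathfrak{g}_\alpha)M_{\lambda'}=0$ for every $\alpha\in\Delta^+$, i.e.\ every vector of $M_{\lambda'}$ is a maximal vector. Pick $0\neq v\in M_{\lambda'}$, let $s_0:=1\in S$ be the generator of the top weight space $S_{\rho-\rho_\mathfrak{h}}$, which is one-dimensional by \eqref{Sweight}, and set $\mu_0:=\lambda'+\rho-\rho_\mathfrak{h}$, the weight of $v\otimes s_0$. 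The first step is to check that $D_{\mathfrak{g},\mathfrak{h}}(M)(v\otimes s_0)=0$. In the root-vector basis $\{e_{\pm i}\}$ of $\mathfrak{q}$ of Section \ref{Spinreprese} one has, up to normalization, $D_{\mathfrak{g},\mathfrak{h}}(M)=\sum_i\pi(e_i)\otimes\gamma(e_{-i})+\sum_i\pi(e_{-i})\otimes\gamma(e_i)-1\otimes\gamma(c)$, and: the first sum annihilates $v\otimes s_0$ since $e_i\in\mathfrak{n}$ and $v$ is maximal; the second since $\gamma(e_i)$ acts on $\bigwedge\mathfrak{n}_\mathfrak{q}^-$ by contraction and so kills $s_0$; and the cubic term since every monomial $\gamma(e_{\pm i})\gamma(e_{\pm j})\gamma(e_{\pm k})$ occurring in $\gamma(c)$ has its three weights summing to $0$, whence a short case analysis — using that $\mathfrak{n}_\mathfrak{q}^-$ is isotropic and $\gamma(e_i)s_0=0$ — shows each such monomial annihilates $s_0$. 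I expect this last verification, that the cubic correction kills the top spinor, to be the one genuinely computational point; everything else is formal bookkeeping.

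The second step is to show $v\otimes s_0\notin\mathrm{im}\,D_{\mathfrak{g},\mathfrak{h}}(M)$. As $\mathfrak{t}\subseteq\mathfrak{h}$ and $D_{\mathfrak{g},\mathfrak{h}}(M)$ is $\mathfrak{h}$-equivariant, it preserves $\mathfrak{t}$-weight spaces, so $\big(\mathrm{im}\,D_{\mathfrak{g},\mathfrak{h}}(M)\big)_{\mu_0}=D_{\mathfrak{g},\mathfrak{h}}(M)\big((M\otimes S)_{\mu_0}\big)$. Since $\lambda'$ is maximal in $M$ and each weight of $S$ is $\rho-\rho_\mathfrak{h}$ minus a nonnegative integral combination of roots in $\Delta_\mathfrak{q}^+$, a direct check gives $(M\otimes S)_{\mu_0}=M_{\lambda'}\otimes s_0$ (so in particular $\mu_0$ is a maximal weight of $M\otimes S$); and the computation of the first step applies verbatim to every $m\in M_{\lambda'}$, all of which are maximal vectors, so $D_{\mathfrak{g},\mathfrak{h}}(M)$ annihilates all of $(M\otimes S)_{\mu_0}$. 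Hence $\big(\mathrm{im}\,D_{\mathfrak{g},\mathfrak{h}}(M)\big)_{\mu_0}=0$ while $\big(\ker D_{\mathfrak{g},\mathfrak{h}}(M)\big)_{\mu_0}=(M\otimes S)_{\mu_0}\neq0$, and taking $\mu_0$-weight spaces in $H_D^{\mathfrak{g},\mathfrak{h}}(M)=\ker D_{\mathfrak{g},\mathfrak{h}}(M)/\big(\mathrm{im}\,D_{\mathfrak{g},\mathfrak{h}}(M)\cap\ker D_{\mathfrak{g},\mathfrak{h}}(M)\big)$ — the functor $V\mapsto V_{\mu_0}$ being exact on weight modules — yields $H_D^{\mathfrak{g},\mathfrak{h}}(M)_{\mu_0}\cong M_{\lambda'}\otimes s_0\neq0$, so $H_D^{\mathfrak{g},\mathfrak{h}}(M)\neq0$. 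The only real idea here is that localizing at the extremal weight $\mu_0$ makes both the kernel and the image of $D_{\mathfrak{g},\mathfrak{h}}(M)$ completely transparent there, which is exactly what lets the argument bypass the absence of any finiteness for $M\otimes S$ over $\mathfrak{h}$.
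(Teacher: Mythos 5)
Your proof is correct and takes essentially the same route as the paper's: pick a maximal weight vector $v$ of $M$ and the top spinor $\mathbf{1}\in S$, split $D_{\mathfrak{g},\mathfrak{h}}(M)=D_++D_--1\otimes\gamma(c)$ to see $D_{\mathfrak{g},\mathfrak{h}}(M)(v\otimes\mathbf{1})=0$, then use the extremality of the weight $\lambda+\rho-\rho_\mathfrak{h}$ to rule out a preimage. The single point of divergence is how the cubic term is disposed of: the paper invokes Kostant's kernel formula \eqref{kernel} for the trivial module $V_0$ to deduce $\mathbf{1}\in\ker\gamma(c)$, whereas you sketch a direct computation — which indeed goes through, since each monomial $\gamma(e_\alpha)\gamma(e_\beta)\gamma(e_\gamma)$ in $\gamma(c)$ has $\alpha+\beta+\gamma=0$ with $\alpha,\beta,\gamma\in\Delta_\mathfrak{q}$ pairwise non-opposite, so the three Clifford factors anticommute, at least one of the roots lies in $\Delta_\mathfrak{q}^+$, and sliding that factor to the right kills $\mathbf{1}$; just note that for the two-positive/one-negative case you are anticommuting a positive root vector past a negative one, which needs the pairing $\langle e_\alpha,e_{-\gamma}\rangle=0$ for $\alpha\neq\gamma$ rather than only the isotropy of $\mathfrak{n}_\mathfrak{q}^-$ you cite.
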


\begin{proof}
	Let $M$ be a nonzero $\mathfrak{g}$-module in $\mathcal{O}$. Let $\lambda\in\mathfrak{t}^*$ be the maximal weight of $M$ and $v_\lambda^+$ a nonzero maximal weight vector of the corresponding weight subspace $M_\lambda$. It suffices to show that 
 the vector $v_\lambda^+\otimes \mathbf{1}$ belongs to $H_D^{\mathfrak{g},\mathfrak{h}}(M)$.

First, we show that $v_\lambda^+\otimes \mathbf{1}$ is annihilated by $D_{\mathfrak{g},\mathfrak{h}}(M)$. Let $\{e_\alpha\}_{\alpha\in\Delta_\mathfrak{q}}$ be a set of root vectors belonging to $\mathfrak{q}$ and such that 
\begin{equation*}
\langle e_\alpha,e_{-\alpha}\rangle=1 \text{ for every }\alpha\in\Delta_\mathfrak{q}.
\end{equation*}
For every $\alpha\in\Delta^+_\mathfrak{q}$, set
\begin{equation*}
Z_\alpha^{(1)}:=\frac{\sqrt{2}}{2}(e_\alpha+e_{-\alpha})
\end{equation*}
and
\begin{equation*}
Z_\alpha^{(2)}:=\frac{\sqrt{2}}{2i}(e_\alpha-e_{-\alpha}).
\end{equation*}
Then
$\{Z_\alpha^{(i)}\mid\alpha\in\Delta^+_\mathfrak{q},i=1,2\}$ is an orthonormal basis of $\mathfrak{q}$.
Replacing this basis in \eqref{cubicDirac} gives
\begin{equation*}
D_{\mathfrak{g},\mathfrak{h}}(M)= D_++D_--1\otimes \gamma(c)
\end{equation*}
where 
\begin{equation*}
D_\pm:=\sum\limits_{\alpha\in\Delta_\mathfrak{q}^+}\pi(e_{\pm\alpha})\otimes \gamma(e_{\mp\alpha})
\end{equation*}
and $\gamma(c)$ is the cubic term \eqref{cubictermm}.
Then, for every $\alpha\in\Delta_\mathfrak{q}^+$, $\pi(e_\alpha)$ annihilates $v_\lambda^+$ while $\gamma(e_\alpha)$ annihilates $\mathbf{1}$ so that 
\begin{equation*}
D_\pm(v_\lambda\otimes \mathbf{1})=0.
\end{equation*}
On the other hand, the cubic term $\gamma(c)$ annihilates $\mathbf{1}$. More precisely, if $V_0$ stands for the $1$-dimensional simple $\mathfrak{g}$-module of highest weight $0$, replacing $V_0$ into \eqref{kernel} gives
\begin{equation*}
\ker \gamma(c)\cong\ker D_{\mathfrak{g},\mathfrak{h}}(V_0)\cong\bigoplus_{w\in W^1} F_{w\rho-\rho_{\mathfrak{h}}},
\end{equation*}
where 
 \begin{equation*}
W^1:=\{w\in W\mid \Delta_\mathfrak{h}^+\subseteq w\Delta^+\}
\end{equation*}
and $F_{\mu}$ stands for the simple $\mathfrak{h}$-module of highest weight $\mu\in\mathfrak{t}^*$. For $w=1$, $F_{\rho-\rho_\mathfrak{h}}$ is contained in $\ker \gamma(c)$ while the weight subspace $S_{\rho-\rho_\mathfrak{h}}$ of $S$ is $1$-dimensional and generated by $\mathbf{1}$. Therefore, $\mathbf{1}\in\ker \gamma(c)$ and we conclude that $v_\lambda^+\otimes \mathbf{1}\in\ker D_{\mathfrak{g},\mathfrak{h}}(M)$.

Now we show that $v_\lambda^+\otimes \mathbf{1}\notin \mathrm{im}\hspace{0.5mm}D_{\mathfrak{g},\mathfrak{h}}(M)$. Indeed,  assume that the weight vector $v_\lambda^+\otimes \mathbf{1}\in(M\otimes S)_{\lambda+\rho-\rho_\mathfrak{h}}$  belongs to $\mathrm{im}\hspace{1mm}D_{\mathfrak{g},\mathfrak{h}}(M)$. Then, there must be a weight vector $u\in(M\otimes S)_{\lambda+\rho-\rho_\mathfrak{h}}$ such that 
\begin{equation*}
D_{\mathfrak{g},\mathfrak{h}}(M)u=v_\lambda^+\otimes\mathbf{1}.
\end{equation*}
But $\lambda$ and $\rho-\rho_\mathfrak{h}$ are maximal weights of $M$ and $S$ respectively, so that $u$ is a linear combination of tensor products $v_i\otimes \mathbf{1}$ with each $v_i$ being a nonzero maximal weight vector of $M_\lambda$. Replacing $v_\lambda^+$ by $v_i$ in the above argument, we deduce that $D_{\mathfrak{g},\mathfrak{h}}(M)$ annihilates every element $v_i\otimes \mathbf{1}$. Therefore, 
\begin{equation*}
0=D_{\mathfrak{g},\mathfrak{h}}(M)u\neq v_\lambda^+\otimes\mathbf{1}
\end{equation*}
and
$v_\lambda^+\otimes \mathbf{1}\notin \mathrm{im}\hspace{0.5mm}D_{\mathfrak{g},\mathfrak{h}}(M)$. Hence $v_\lambda^+\otimes \mathbf{1}\in H_D^{\mathfrak{g},\mathfrak{h}}(M)$ and $H_D^{\mathfrak{g},\mathfrak{h}}(M)$ is nonzero.
\end{proof}

\section{Application to simple Verma modules}\label{Application}
Let $\mathfrak{g}$ be a complex semisimple Lie algebra and $\mathfrak{h}$ a Lie subalgebra of $\mathfrak{g}$ satisfying \eqref{conditionh}. Moreover, assume that $\mathfrak{h}$ contains a Cartan subalgebra $\mathfrak{t}$ of $\mathfrak{g}$.
In this section, we provide an application for simple Verma modules using results from Section \ref{Dirac cohomology and BGG}. More precisely, we show that the Dirac cohomology  of a simple Verma $\mathfrak{g}$-module turns out to be a simple Verma $\mathfrak{h}$-module.

%	 Let $\mathfrak{h}$ be a subalgebra of $\mathfrak{g}$ satisfying \eqref{conditionh} and consider the Dirac operator 
%	\begin{equation*}
%	D_{\mathfrak{g},\mathfrak{h}}(M(\lambda)):M(\lambda)\otimes S\rightarrow M(\lambda)\otimes S.
%	\end{equation*}
	
	\begin{theorem}\label{simple}
		Let $M(\lambda)$ be the simple Verma module of highest weight $\lambda\in\mathfrak{t}^*$. Then
		\begin{equation*}
		H_D(M(\lambda))\cong M_\mathfrak{h}(\lambda+\rho-\rho_\mathfrak{h}),
		\end{equation*}
		where $M_\mathfrak{h}(\lambda+\rho-\rho_\mathfrak{h})$ stands for the simple Verma $\mathfrak{h}$-module of highest weight $\lambda+\rho-\rho_\mathfrak{h}$.
		\end{theorem}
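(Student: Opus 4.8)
The plan is to pin down $H_D^{\mathfrak{g},\mathfrak{h}}(M(\lambda))$ inside the category $\mathcal{O}(\mathfrak{h})$ and then to identify it by computing its composition factors and their multiplicities. First I would recall that $M(\lambda)$ is simple exactly when $\lambda$ is antidominant, i.e. $\langle\lambda+\rho,\alpha^\vee\rangle\notin\mathbb{Z}_{>0}$ for all $\alpha\in\Delta^+$ (see \cite[Theorem 4.8]{humO}). Since $\Delta_\mathfrak{h}^+\subseteq\Delta^+$, the same inequalities hold for all $\alpha\in\Delta_\mathfrak{h}^+$, so $\lambda+\rho-\rho_\mathfrak{h}$ is antidominant for $\mathfrak{h}$ and hence $M_\mathfrak{h}(\lambda+\rho-\rho_\mathfrak{h})=L_\mathfrak{h}(\lambda+\rho-\rho_\mathfrak{h})$ is already simple. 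Thus it is enough to prove that $H_D^{\mathfrak{g},\mathfrak{h}}(M(\lambda))$ has the simple module $L_\mathfrak{h}(\lambda+\rho-\rho_\mathfrak{h})$ of highest weight $\lambda+\rho-\rho_\mathfrak{h}$ as its only composition factor, occurring with multiplicity one.

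For the composition factors I would argue as follows. By Theorem \ref{voganO} the module $H_D^{\mathfrak{g},\mathfrak{h}}(M(\lambda))$ lies in $\mathcal{O}(\mathfrak{h})$, hence has finite length; let $L_\mathfrak{h}(\sigma)$ be one of its composition factors. Its highest weight $\sigma$ occurs as a weight of $M(\lambda)\otimes S$, and since the weights of $M(\lambda)$ are $\le\lambda$ while those of $S$ are $\le\rho-\rho_\mathfrak{h}$ with respect to $\mathbb{Z}_{\ge0}\Delta^+$ (formula \eqref{Sweight}), one gets $\lambda+\rho-\rho_\mathfrak{h}-\sigma\in\mathbb{Z}_{\ge0}\Delta^+$. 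On the other hand $M(\lambda)\in\mathcal{O}_{\chi_{\lambda+\rho}}$ while $L_\mathfrak{h}(\sigma)$ has infinitesimal character $\xi_{\sigma+\rho_\mathfrak{h}}$, so Corollary \ref{vogy} forces $\sigma+\rho_\mathfrak{h}=w(\lambda+\rho)$ for some $w\in W$. Combining the two facts, $\lambda+\rho-w(\lambda+\rho)\in\mathbb{Z}_{\ge0}\Delta^+$; but $\lambda+\rho$, being antidominant, is the $\le$-smallest element among its $W$-translates congruent to it modulo the root lattice \cite{humO}, so $w(\lambda+\rho)=\lambda+\rho$, i.e. $\sigma=\lambda+\rho-\rho_\mathfrak{h}$. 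Hence $L_\mathfrak{h}(\lambda+\rho-\rho_\mathfrak{h})$ is the unique composition factor.

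It then remains to control the multiplicity, for which it suffices to observe that $(M(\lambda)\otimes S)_{\lambda+\rho-\rho_\mathfrak{h}}$ is one-dimensional: writing $\lambda+\rho-\rho_\mathfrak{h}$ as a sum of a weight of $M(\lambda)$ and a weight of $S$ forces both to be the respective maximal weights, so this weight space is spanned by $v_\lambda^+\otimes\mathbf{1}$. Since $H_D^{\mathfrak{g},\mathfrak{h}}(M(\lambda))$ is a subquotient of $M(\lambda)\otimes S$, its $(\lambda+\rho-\rho_\mathfrak{h})$-weight space is at most one-dimensional, so $L_\mathfrak{h}(\lambda+\rho-\rho_\mathfrak{h})$ occurs at most once; and it occurs at least once since $H_D^{\mathfrak{g},\mathfrak{h}}(M(\lambda))\ne0$ by Theorem \ref{novanish} (indeed, as in the proof of that theorem, $v_\lambda^+\otimes\mathbf{1}\in\ker D_{\mathfrak{g},\mathfrak{h}}(M(\lambda))$ and is not in the image). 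Therefore $H_D^{\mathfrak{g},\mathfrak{h}}(M(\lambda))$ is simple and equal to $L_\mathfrak{h}(\lambda+\rho-\rho_\mathfrak{h})=M_\mathfrak{h}(\lambda+\rho-\rho_\mathfrak{h})$, which is the claim.

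The hard part is the composition-factor step. The subtlety there is that for a non-integral $\lambda$ the weight $\lambda+\rho$ need not be minimal in its full $W$-orbit, only among the $W$-translates lying in $\lambda+\rho+\mathbb{Z}\Delta$; it is precisely the weight estimate coming from $M(\lambda)\otimes S$ that confines the candidate highest weights to this sublattice, so Corollary \ref{vogy} and the weight estimate must be used in tandem. The passage of antidominance from $\mathfrak{g}$ to $\mathfrak{h}$ and the one-dimensionality of the extremal weight space are routine.
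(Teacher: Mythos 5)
Your proof follows essentially the same route as the paper's: antidominance of $\lambda$, finite length via Theorem \ref{voganO}, nonvanishing via Theorem \ref{novanish}, Vogan's conjecture (Corollary \ref{vogy}) to constrain candidate highest weights, the fact that $\lambda+\rho-\rho_\mathfrak{h}$ is the maximal weight of $M(\lambda)\otimes S$ to conclude equality, and one-dimensionality of the extremal weight space to get multiplicity one. The paper's proof is identical in spirit and in the lemmas invoked.

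One small thing worth flagging, since you raise it yourself: you are actually a bit more careful than the paper at the step where \cite[Proposition~3.5]{humO} is applied. That proposition compares $\lambda+\rho$ with $w(\lambda+\rho)$ only for $w$ in the integral Weyl group $W_{[\lambda+\rho]}$, i.e.\ only when $w(\lambda+\rho)-(\lambda+\rho)\in\mathbb{Z}\Delta$. The paper invokes it directly for the $w\in W$ produced by Corollary \ref{vogy}, without first observing that the weight estimate $\mu\le\lambda+\rho-\rho_\mathfrak{h}$ forces $w(\lambda+\rho)\in\lambda+\rho+\mathbb{Z}\Delta$ and hence $w\in W_{[\lambda+\rho]}$. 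Your ordering (weight estimate first, then antidominance) makes this logically tight; the paper tacitly assumes it. The conclusion and ingredients are the same, so this is a presentational refinement rather than a genuinely different proof.
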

	
	\begin{proof}	Let $M(\lambda)$, $\lambda\in\mathfrak{t}^*$, be the Verma module with highest weight $\lambda\in\mathfrak{t}^*$, and suppose that $M(\lambda)$ is simple. Then $\lambda$ is $\rho$-antidominant \cite[Section 4.8]{humO}, i.e. 
		\begin{equation*}
		2\dfrac{\langle\lambda+\rho,\alpha\rangle}{\lVert\alpha\rVert^2}\notin \mathbb{Z}_{>0}\text{ for every }\alpha\in\Delta^+.
		\end{equation*}
		
	According to Theorem \ref{voganO}, $H_D^{\mathfrak{g},\mathfrak{h}}(M(\lambda))$ belongs to $\mathcal{O}(\mathfrak{h})$ so that it is of finite length, while, from Theorem \ref{novanish}, $H_D^{\mathfrak{g},\mathfrak{h}}(M(\lambda))$ is nonzero. Let the simple $\mathfrak{h}$-module $L_\mathfrak{h}(\mu)$ of highest weight $\mu\in\mathfrak{t}^*$ be a composition factor of $H_D^{\mathfrak{g},\mathfrak{h}}(M(\lambda))$. According to Corollary \ref{vogy}, $\mu+\rho_\mathfrak{h}$ must belong to the $W$-orbit of $\lambda+\rho$, i.e. $\mu=w(\lambda+\rho)-\rho_\mathfrak{h}$\hspace{1mm} for some $w\in W$. Since $\lambda$ is $\rho$-antidominant, $\lambda+\rho\leq w(\lambda+\rho)$ \cite[Proposition 3.5]{humO} so that $\mu=w(\lambda+\rho)-\rho_\mathfrak{h}\geq\lambda+\rho-\rho_\mathfrak{h}$. But $\lambda+\rho-\rho_\mathfrak{h}$ is the maximal weight of $M(\lambda)\otimes S$ and one concludes that $\mu=\lambda+\rho-\rho_\mathfrak{h}$. Therefore, every composition factor of $H_D^{\mathfrak{g},\mathfrak{h}}(M(\lambda))$ is isomorphic to  $L_\mathfrak{h}(\lambda+\rho-\rho_\mathfrak{h})$. 
	
	On the other hand,
	the weight subspace $(M\otimes S)_{\lambda+\rho-\rho_\mathfrak{h}}$ of $M\otimes S$ is generated by a nonzero highest weight vector $v_\lambda^+$ of $M(\lambda)$ tensorized with the weight vector $\mathbf{1}$ of $S$, and is $1$-dimensional. Therefore, $L_\mathfrak{h}(\lambda+\rho-\rho_\mathfrak{h})$ appears only once in $H_D^{\mathfrak{g},\mathfrak{h}}(M(\lambda)$ so that $H_D^{\mathfrak{g},\mathfrak{h}}(M(\lambda))\cong L_\mathfrak{h}(\lambda+\rho-\rho_\mathfrak{h})$. The equality $L_\mathfrak{h}(\lambda+\rho-\rho_\mathfrak{h})=M_\mathfrak{h}(\lambda+\rho-\rho_\mathfrak{h})$ follows from the fact that the element $\lambda+\rho-\rho_\mathfrak{h}$ turns out to be $\rho_\mathfrak{h}$-antidominant so that $M_\mathfrak{h}(\lambda+\rho-\rho_\mathfrak{h})$ is simple \cite[Section 4.8]{humO}.

\end{proof}

  \section{Dirac and Lie algebra cohomology for $\mathcal{O}$}\label{DiracandLie}
  
  In this section, we extend the method used in \cite{renardpandzic}, in order to relate Dirac cohomology of unitary $\mathfrak{g}$-modules $M\in\mathcal{O}$ and nilpotent Lie algebra cohomology with coefficients in $M$. 
  
  Fix $\mathfrak{g}_0$ to be a $\theta$-stable real form of $\mathfrak{g}$ so that $\mathfrak{g}_0=\mathfrak{k}_0\stackrel{\theta}{\oplus}\mathfrak{p}_0$ is the corresponding Cartan decomposition.
  
  \begin{definition}
  	A $\mathfrak{g}$-module $(M,\pi)\in\mathcal{O}$ is said to be unitary if it admits a Hermitian inner product $\langle\cdot,\cdot\rangle$ such that 
  	\begin{equation*}
  	\langle \pi(X)u,v\rangle+\langle u,\pi(X)v\rangle=0
  	\end{equation*}
  	for every $u,v\in M$ and every $X\in\mathfrak{g}_0$.
  \end{definition}
In other words, $\pi(X)^\text{adj}=-\pi(X)$ for every $X\in\mathfrak{g}_0$. One can easily check that, if \hspace{1mm}$\widetilde{\cdot}$ stands for the conjugation with respect to $\mathfrak{g}_0$, then 
\begin{equation}\label{adjointO}
\pi(X)^{\text
	{adj}}=-\pi(\widetilde{X}) \text{ for every }X\in\mathfrak{g}.
\end{equation}
For more details concerning the classification of unitary highest weight modules, see \cite{enright} for the case when $\mathfrak{g}_0$ is a compact real form of $\mathfrak{g}$, and \cite{noncompactform} for the case when $\mathfrak{g}_0$ is a noncompact real form of $\mathfrak{g}$.

Assume now that $\mathfrak{g}$ and $\mathfrak{k}:=(\mathfrak{k}_0)^\mathbb{C}$ form a Hermitian symmetric pair. Then, %$\mathfrak{p}:=\mathfrak{k}^\perp$ is an abelian Lie algebra, 
$\text{rank}_{\mathbb{C}}(\mathfrak{g})=\text{rank}_\mathbb{C}(\mathfrak{k})$ and $\mathfrak{g}=\mathfrak{k}\oplus\mathfrak{p}^+\oplus\mathfrak{p}^-$, with $\mathfrak{p}^+$ and $\mathfrak{p}^-$ being maximal isotropic dual subspaces of $\mathfrak{p}:=(\mathfrak{p}_0)^\mathbb{C}$ with respect to the restriction of the Killing form on $\mathfrak{p}$. Moreover $\mathfrak{p}$ is abelian and $\mathfrak{k}\oplus\mathfrak{p}^+$ is a parabolic subalgebra of $\mathfrak{g}$ while we make the assumption that $\mathfrak{k}\oplus \mathfrak{p}^+\supseteq\mathfrak{b}$. 

Let $M$ be a $\mathfrak{g}$-module. The spaces $ \mathrm{Hom}(\bigw^k \mathfrak{p}^+,M)$, $k\geq0$, provide the Chevalley-Eilenberg complex which calculates the nilpotent Lie algebra cohomology of $\mathfrak{p}^+$ with coefficients in $M$. The differential
\begin{equation*}
	d^k: \mathrm{Hom}(\bigw\hspace{-.5mm}{}^k \hspace{1mm}\mathfrak{p}^+,M)\rightarrow \mathrm{Hom}(\bigw\hspace{-.5mm}{}^{k+1} \hspace{1mm}\mathfrak{p}^+,M)
\end{equation*}
of this complex is given by 
\begin{align*}
	d^k\omega(X_0\wedge&\ldots\wedge X_k):=\sum\limits_{i=0}^k (-1)^iX_i\big(\omega(X_0\wedge\ldots \wedge\widehat{X}_i\wedge\ldots \wedge X_k)\big),
\end{align*}
where $X_i\in\mathfrak{p}^+$, $\omega\in\mathrm{Hom}(\bigw\hspace{-.5mm}{}^k \hspace{1mm}\mathfrak{p}^+,M)$ and $\widehat{\cdot}$ indicates that we omit the corresponding element. 
Then, the $k$-th nilpotent Lie algebra cohomology $H^k(\mathfrak{p}^+,M)$ of $\mathfrak{p}^+$ with coefficients in $M$ is 
\begin{equation*}
	H^k(\mathfrak{p}^+,M)=\frac{\ker d^k}{\mathrm{im}\hspace{0.3mm}d^{k-1}}.
\end{equation*}
We note that the map $d^k$ is $\mathfrak{k}$-equivariant so that $H^k(\mathfrak{p}^+,M)$ is a $\mathfrak{k}$-module for every $k\geq0$.

On the other hand, the spaces $M\otimes \bigw^{\hspace{-0.3mm}k} \mathfrak{p}^-$, $k\geq0$, provide the complex for the nilpotent Lie algebra homology $H_k(\mathfrak{p}^-,M)$ of $\mathfrak{p}^-$ with coefficients in $M$ where the differential
\begin{equation*}
	\partial^k:M\otimes \bigw\hspace{-.5mm}{}^{k+1} \hspace{1mm}\mathfrak{p}^-\rightarrow M\otimes \bigw\hspace{-.5mm}{}^{k} \hspace{1mm}\mathfrak{p}^-
\end{equation*}
is given by
\begin{align*}
	\partial^k\big(v\otimes Y_1\wedge&\ldots\wedge Y_{k+1}\big):=\sum\limits_{i=1}^{k+1} (-1)^{i+1}Y_iv\otimes Y_1\wedge\ldots \wedge\widehat{Y}_i\wedge\ldots \wedge Y_{k+1}.
\end{align*}
The $k$-th nilpotent Lie algebra homology $H_k(\mathfrak{p}^-,M)$ of $\mathfrak{p}^-$ with coefficients in $M$ is
\begin{equation*}
	H_k(\mathfrak{p}^-,M)=\frac{\ker \partial^{k-1}}{\mathrm{im} \partial^{k}}.
\end{equation*}
As before, the map $\partial^k$ is $\mathfrak{k}$-equivariant so that $H_k(\mathfrak{p}^-,M)$ is a $\mathfrak{k}$-module for every $k\geq0$.
 
\begin{theorem}\label{cohom}
	Let $\mathfrak{g},\mathfrak{k}$ be a Hermitian symmetric pair and $M$ a unitary module belonging to $\mathcal{O}$. Then 
	\begin{equation*}
	H_D^{\mathfrak{g},\mathfrak{k}}(M)\cong \ker D_{\mathfrak{g},\mathfrak{k}}(M)\cong H^\cdot(\mathfrak{p}^+,M)\otimes \mathbb{C}_{\rho-\rho_\mathfrak{k}}\cong H_\cdot(\mathfrak{p}^-,M)\otimes \mathbb{C}_{\rho-\rho_\mathfrak{k}},
	\end{equation*}
	where $H^\cdot(\mathfrak{p}^+,M)$ (respectively $H_\cdot(\mathfrak{p}^-,M)$) stands for the nilpotent Lie algebra cohomology (respectively homology) with coefficients in $M$.
	%In other words, $H_D^{\mathfrak{g},\mathfrak{h}}(M)$ coincides, up to some character, with the nilpotent Lie algebra (co)homology with coefficients in $M$.
\end{theorem}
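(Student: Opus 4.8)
The plan is to adapt the argument of Huang--Pand\v zi\'c--Renard \cite{renardpandzic}, taking care that the only new feature is that $M$ need not be admissible for $\mathfrak{k}$. Since $(\mathfrak{g},\mathfrak{k})$ is a Hermitian symmetric pair, the cubic term $\gamma(c)$ vanishes, $\mathfrak{p}=\mathfrak{p}^+\oplus\mathfrak{p}^-$ with $\mathfrak{p}$ abelian, and the spin module is $S=\bigwedge\mathfrak{p}^-\otimes\mathbb{C}_{\rho-\rho_\mathfrak{k}}$ as a $\mathfrak{k}$-module, so that up to the twist by $\mathbb{C}_{\rho-\rho_\mathfrak{k}}$ we may identify $M\otimes S$ with $M\otimes\bigwedge\mathfrak{p}^-$. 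Under this identification, the Dirac operator splits as $D_{\mathfrak{g},\mathfrak{k}}(M)=D^++D^-$, where, for an orthonormal-type basis of $\mathfrak{p}$ built from dual root vectors $e_{\pm\alpha}$, $\beta\in\Delta_\mathfrak{p}^+$, one has (up to a harmless scalar) $D^-=\sum_\beta\pi(e_{-\beta})\otimes\gamma(e_\beta)$ acting as a wedging operator that coincides with the homology differential $\partial$ for $\mathfrak{p}^-$, and $D^+=\sum_\beta\pi(e_\beta)\otimes\gamma(e_{-\beta})$ acting as a contraction operator that is (the transpose of) the cohomology differential $d$ for $\mathfrak{p}^+$. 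This is the standard Kostant-type dictionary between the Dirac operator and the Koszul differentials; I would spell it out by a direct computation on a basis, as in \cite[Section 2]{pandzic} or \cite{renardpandzic}.

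Next I would use unitarity. The hypothesis \eqref{adjointO} says $\pi(X)^{\mathrm{adj}}=-\pi(\widetilde X)$, and since $\widetilde{e_\beta}$ is proportional to $e_{-\beta}$ for $\beta\in\Delta_\mathfrak{p}^+$ (the conjugation with respect to $\mathfrak{g}_0$ swaps $\mathfrak{p}^+$ and $\mathfrak{p}^-$), while $\gamma(e_\beta)^{\mathrm{adj}}$ is likewise proportional to $\gamma(e_{-\beta})$ with the Clifford pairing, one checks that $D^+$ and $D^-$ are adjoints of one another with respect to the natural Hermitian form on $M\otimes S$ obtained from the form on $M$ and a fixed positive form on $\bigwedge\mathfrak{p}^-$. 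Consequently $D_{\mathfrak{g},\mathfrak{k}}(M)$ is self-adjoint, $D_{\mathfrak{g},\mathfrak{k}}(M)^2=D^+D^-+D^-D^+$ is a nonnegative operator, and the three operators $D^+$, $D^-$, $D_{\mathfrak{g},\mathfrak{k}}(M)^2$ pairwise commute with the ``Laplacian'' in the usual way. The key point is the Hodge-theoretic decomposition
\begin{equation*}
\ker D_{\mathfrak{g},\mathfrak{k}}(M)=\ker D^+\cap\ker D^-,
\end{equation*}
which on a space where $D_{\mathfrak{g},\mathfrak{k}}(M)^2$ is merely locally finite needs a short argument: restrict to a generalized eigenspace $\big(M\otimes S\big)(c)$ of $D_{\mathfrak{g},\mathfrak{k}}(M)^2$, note that on such a subspace $D_{\mathfrak{g},\mathfrak{k}}(M)$ acts with the single ``generalized eigenvalue'' $\pm\sqrt c$, and use self-adjointness (the generalized eigenspace of a self-adjoint operator is a genuine eigenspace) to deduce $c\geq0$ and, for $c>0$, that $\big(M\otimes S\big)(c)$ contributes nothing to $\ker D^+\cap\ker D^-$; for $c=0$ self-adjointness forces $D_{\mathfrak{g},\mathfrak{k}}(M)=0$ there, hence $D^\pm$ act as adjoint \emph{nilpotents} that annihilate that subspace, giving $\ker D^+\cap\ker D^-\supseteq\big(M\otimes S\big)(0)=\ker D_{\mathfrak{g},\mathfrak{k}}(M)^2\supseteq\ker D_{\mathfrak{g},\mathfrak{k}}(M)$ and the reverse inclusions are clear. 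This also yields $H_D^{\mathfrak{g},\mathfrak{k}}(M)=\ker D_{\mathfrak{g},\mathfrak{k}}(M)$ since $\operatorname{im}D_{\mathfrak{g},\mathfrak{k}}(M)\cap\ker D_{\mathfrak{g},\mathfrak{k}}(M)=0$ for a self-adjoint operator.

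It then remains to identify $\ker D^+\cap\ker D^-$ with the (co)homology. Since $D^-=\partial$ up to the twist, $\ker D^-$ already computes cycles; the finiteness provided by Corollary~\ref{decom} (each $\big(M\otimes S\big)(c)$ is finitely generated, hence Noetherian over $U(\mathfrak{k})$) lets me run the Hodge argument degree by degree in the $\bigwedge^k\mathfrak{p}^-$-grading to get $\ker D^+\cap\ker D^-\cong H_\cdot(\mathfrak{p}^-,M)$ as $\mathfrak{k}$-modules, and dually, using that $D^+$ is (the transpose under a $\mathfrak{k}$-invariant pairing $\bigwedge^k\mathfrak{p}^-\cong(\bigwedge^k\mathfrak{p}^+)^*$ of) the Chevalley--Eilenberg differential $d$, I get $\ker D^+\cap\ker D^-\cong H^\cdot(\mathfrak{p}^+,M)$; restoring the twist by $\mathbb{C}_{\rho-\rho_\mathfrak{k}}$ from the spin module gives the three stated isomorphisms. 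The main obstacle, and the place that deserves genuine care rather than a citation, is precisely the Hodge-type equality $\ker D_{\mathfrak{g},\mathfrak{k}}(M)=\ker D^+\cap\ker D^-$ and the vanishing of $\operatorname{im}D\cap\ker D$: in \cite{renardpandzic} these rest on honest orthogonal-complement arguments available because $M$ is admissible and the relevant spaces decompose into \emph{finite-dimensional} $\mathfrak{k}$-isotypic pieces on which $D^2$ acts by scalars, whereas here I must instead push everything through the finitely generated generalized eigenspaces $\big(M\otimes S\big)(c)$ of $D_{\mathfrak{g},\mathfrak{k}}(M)^2$ and use that a self-adjoint operator is semisimple on each such finite-``length'' generalized eigenspace; verifying that the Hermitian form on $M\otimes S$ remains nondegenerate (or at least that the argument survives its possible degeneracy, e.g. by working with the radical) on these infinite-dimensional pieces is the technical heart of the proof.
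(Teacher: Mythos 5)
Your proposal follows the same route as the paper's proof: identify $D_{\mathfrak{g},\mathfrak{k}}(M)=C^++C^-$ with $d+\partial$ under $M\otimes S\cong\mathrm{Hom}(\bigwedge\mathfrak{p}^+,M)\otimes\mathbb{C}_{\rho-\rho_\mathfrak{k}}$, use unitarity to obtain an adjunction relation between $C^+$ and $C^-$, derive a Hodge-type decomposition from the generalized eigenspace decomposition of Corollary~\ref{decom}, and read off the three isomorphisms. You also correctly flag the nonadmissibility of $M$ over $\mathfrak{k}$ as the point where one must work with generalized eigenspaces (or, equivalently, with finite-dimensional weight subspaces) instead of finite-dimensional $\mathfrak{k}$-isotypic pieces.

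There is, however, a sign error that you should fix. With the paper's conventions, unitarity means $\pi(X)^{\mathrm{adj}}=-\pi(\widetilde X)$ while the Hermitian form on $\bigwedge\mathfrak{p}^-$ satisfies $\gamma(X)^{\mathrm{adj}}=\gamma(\widetilde X)$ (no minus sign). Since $\widetilde{e_j}=e_{-j}$, one gets $(\pi(e_j)\otimes\gamma(e_{-j}))^{\mathrm{adj}}=-\pi(e_{-j})\otimes\gamma(e_j)$, i.e.\ $(C^\pm)^{\mathrm{adj}}=-C^\mp$. Hence $D_{\mathfrak{g},\mathfrak{k}}(M)$ is \emph{skew}-adjoint, not self-adjoint, and $D^2$ is \emph{nonpositive}, since $\langle D^2x,x\rangle=-\lVert Dx\rVert^2\le 0$. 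Your Hodge argument survives this correction verbatim ($\ker D^2=\ker D$ follows just as well, and the orthogonal-complement decomposition $M\otimes S=\ker D\oplus\mathrm{im}\,D$ goes through by invertibility of $D$ on each $(M\otimes S)(c)$ with $c\neq 0$), but as written the claims about self-adjointness and nonnegativity contradict the paper's conventions and would confuse a reader trying to check the computation. Finally, the worry you raise about possible degeneracy of the Hermitian form on $M\otimes S$ is unfounded: it is the tensor product of two positive-definite forms, hence positive definite, and it restricts nondegenerately to every weight subspace.
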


In what follows,  in order to prove Theorem \ref{cohom}, we provide a Hodge decomposition for the operator $D_{\mathfrak{g},\mathfrak{k}}(M)$.
Let $\{Z_i\}$ be an orthonormal basis of $\mathfrak{p}_0$ and set
\begin{equation}\label{basesduales}
e_j:=\frac{Z_{2j-1}+iZ_{2j}}{\sqrt{2}},\quad e_{-j}:=\frac{Z_{2j-1}-iZ_{2j}}{\sqrt{2}}.
\end{equation}
Then $\{e_j\}$ and $\{e_{-j}\}$ are dual bases of the maximal isotropic dual subspaces $\mathfrak{p}^+:=\text{span}\{e_j\}_j$ and $\mathfrak{p}^-:=\text{span}\{e_{-j}\}_j$ of $\mathfrak{p}$, respectively. Define operators
 
\begin{align*}
C^+&:=\sum\limits_{j}\pi(e_j)\otimes \gamma(e_{-j})\\
C^-&:=\sum\limits_{j}\pi(e_{-j})\otimes \gamma(e_{j})
\end{align*}
so that $C^++C^-=D_{\mathfrak{g},\mathfrak{k}}(M)$.

Since $\mathfrak{p}^+$ and $\mathfrak{p}^-$ are maximal isotropic dual subspaces of $\mathfrak{p}$, 
\begin{equation*}S\cong \bigw \mathfrak{p}^-\otimes \mathbb{C}_{\rho-\rho_\mathfrak{k}}
\end{equation*}
 and
\begin{align}\label{identification}
\begin{split}
M\otimes S&\cong M\otimes \bigw \mathfrak{p}^-\otimes\mathbb{C}_{\rho-\rho_\mathfrak{k}}\cong M\otimes \bigw(\mathfrak{p}^+)^*\otimes \mathbb{C}_{\rho-\rho_\mathfrak{k}}\\
&
\cong \mathrm{Hom}(\bigw \mathfrak{p}^+,M)\otimes \mathbb{C}_{\rho-\rho_\mathfrak{k}}.\end{split}\end{align}

\begin{lemma}\label{identify}
	Under the identification \eqref{identification}, 
	$C^+=d$ and  $C^-=\partial $ so that 
	\begin{equation*}
	D_{\mathfrak{g},\mathfrak{k}}(M)=d+\partial.
	\end{equation*}
	\end{lemma}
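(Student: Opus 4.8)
The plan is to unwind the chain of isomorphisms in \eqref{identification} and to match the two homogeneous summands $C^+$ and $C^-$ of $D_{\mathfrak{g},\mathfrak{k}}(M)$ with the differentials $d$ and $\partial$ term by term; note first that $C^+$ raises, and $C^-$ lowers, the degree in $\bigwedge\mathfrak{p}^-$, exactly as $d$ raises the Hom-degree and $\partial$ lowers the homological degree, so it is enough to compare in each fixed degree. The first step is to pin down the Clifford action on $S=\bigwedge\mathfrak{p}^-$. Since $\mathfrak{p}^+,\mathfrak{p}^-$ are maximal isotropic and dual under $\langle\cdot,\cdot\rangle$, with $\langle e_j,e_{-k}\rangle=\delta_{jk}$, the Clifford relation $XY+YX=\langle X,Y\rangle$ forces $\gamma(e_{-j})$ to act on $\bigwedge\mathfrak{p}^-$ by exterior multiplication $\varepsilon(e_{-j})$ and $\gamma(e_j)$ by the contraction $\iota(e_j)$ dual to the pairing (with $\iota(e_j)(e_{-k})=\delta_{jk}$); because the Clifford relation is written in this non-doubled form, no auxiliary factor of $2$ or $\sqrt2$ appears. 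Under the Killing-form isomorphism $\mathfrak{p}^-\cong(\mathfrak{p}^+)^*$, $e_{-j}\mapsto e_j^*$, exterior multiplication by $e_{-j}$ becomes exterior multiplication by $e_j^*$ on $\bigwedge(\mathfrak{p}^+)^*$, and $\iota(e_j)$ is unchanged.

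Second, I would transport $C^+$ along $M\otimes\bigwedge(\mathfrak{p}^+)^*\cong\mathrm{Hom}(\bigwedge\mathfrak{p}^+,M)$. There, exterior multiplication by a covector $\xi\in(\mathfrak{p}^+)^*$ acts on $\omega\in\mathrm{Hom}(\bigwedge^k\mathfrak{p}^+,M)$ by $(\xi\wedge\omega)(X_0\wedge\cdots\wedge X_k)=\sum_{i=0}^k(-1)^i\xi(X_i)\,\omega(X_0\wedge\cdots\widehat{X_i}\cdots\wedge X_k)$. Taking $\xi=e_j^*$, composing with $\pi(e_j)$ on the coefficient factor, summing over $j$, and using $\sum_j e_j^*(X_i)e_j=X_i$, the $j$-sum collapses and one obtains
\begin{equation*}
(C^+\omega)(X_0\wedge\cdots\wedge X_k)=\sum_{i=0}^k(-1)^i\,X_i\big(\omega(X_0\wedge\cdots\widehat{X_i}\cdots\wedge X_k)\big),
\end{equation*}
which is precisely $d^k\omega$. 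The bracket term $\sum_{i<j}(-1)^{i+j}\omega([X_i,X_j]\wedge\cdots)$ that appears in a general Chevalley--Eilenberg differential is absent here because $\mathfrak{p}^+$ is abelian, which is exactly where the Hermitian symmetric hypothesis enters. Hence $C^+=d$ under \eqref{identification}.

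Third, working directly on $M\otimes\bigwedge\mathfrak{p}^-$ for $C^-$: since $\gamma(e_j)$ acts as the contraction $\iota(e_j)$, one computes
\begin{equation*}
C^-(v\otimes Y_1\wedge\cdots\wedge Y_{k+1})=\sum_{i=1}^{k+1}(-1)^{i+1}\Big(\sum_j\langle e_j,Y_i\rangle\,\pi(e_{-j})\Big)v\otimes Y_1\wedge\cdots\widehat{Y_i}\cdots\wedge Y_{k+1},
\end{equation*}
and $\sum_j\langle e_j,Y_i\rangle e_{-j}=Y_i$ turns the inner sum into $\pi(Y_i)v$, giving exactly the homology differential $\partial^k$ of Section \ref{DiracandLie}; once more the would-be bracket term vanishes because $\mathfrak{p}^-$ is abelian. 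Finally, the one-dimensional twist $\mathbb{C}_{\rho-\rho_\mathfrak{k}}$ is inert throughout: it only records the correct $\mathfrak{k}$-module structure on $S$ and is untouched by $\pi$ and by $\gamma$, so the operator identities on the underlying vector spaces are insensitive to it. Summing, $D_{\mathfrak{g},\mathfrak{k}}(M)=C^++C^-=d+\partial$. I expect the only delicate point to be the first step — fixing the Clifford normalization and all the signs in $\varepsilon$ and $\iota$ so that no stray scalars appear; once $\gamma$ is made explicit on $\bigwedge\mathfrak{p}^-$, the remainder is a routine term-by-term comparison.
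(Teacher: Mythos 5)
Your proof is correct, and it fills in explicitly the computation that the paper itself leaves to a citation (the paper's proof is just ``See \cite[Section 9.1.4]{pandzic}''). Your normalization of the Clifford action is consistent with the paper's relation $XY+YX=\langle X,Y\rangle$ (so indeed $\gamma(e_{-j})=\varepsilon(e_{-j})$ and $\gamma(e_j)=\iota(e_j)$ with no stray scalars), the transport of $C^+$ to $\mathrm{Hom}(\bigwedge\mathfrak{p}^+,M)$ and of $C^-$ on $M\otimes\bigwedge\mathfrak{p}^-$ both land on the paper's stated differentials with matching signs, the vanishing of the bracket terms is correctly attributed to $\mathfrak{p}^\pm$ being abelian (the Hermitian symmetric hypothesis), and the observation that the one-dimensional twist $\mathbb{C}_{\rho-\rho_\mathfrak{k}}$ is inert is exactly right.
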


\begin{proof}See \cite[Section 9.1.4]{pandzic}.
	\end{proof}

\begin{lemma} Let $M$ be a unitary $\mathfrak{g}$-module. Then, there is a Hermitian inner product $\langle\cdot,\cdot\rangle$ on $M\otimes S$ such that 
	\begin{equation*}
	(C^\pm)^\text{adj}=-C^{\mp}.
	\end{equation*}
	Hence $D_{\mathfrak{g},\mathfrak{k}}(M)$ is skew-adjoint.
	\end{lemma}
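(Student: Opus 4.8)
The plan is to realize the inner product on $M\otimes S$ as a tensor product — the given unitary structure on the first factor and a suitable Hermitian inner product on the spin module $S=\bigw\mathfrak{p}^-$ on the second — and then to check the adjoint identity $(C^+)^{\mathrm{adj}}=-C^-$ by a factorwise computation. Two inputs are needed. First, by \eqref{adjointO}, the given Hermitian inner product on $M$ satisfies $\pi(X)^{\mathrm{adj}}=-\pi(\widetilde X)$ for every $X\in\mathfrak{g}$. Second, since the vectors $e_{\pm j}$ of \eqref{basesduales} are formed from the \emph{real} orthonormal basis $\{Z_i\}$ of $\mathfrak{p}_0\subseteq\mathfrak{g}_0$, and conjugation is conjugate-linear and fixes $\mathfrak{g}_0$ pointwise, one has $\widetilde{e_j}=e_{-j}$ and $\widetilde{e_{-j}}=e_j$.

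For the second factor, I would equip $\mathfrak{p}^-$ with the Hermitian form $\langle X,Y\rangle_{\mathfrak p}:=\langle X,\widetilde Y\rangle$ (positive definite, since $\langle\cdot,\cdot\rangle$ is positive definite on $\mathfrak{p}_0$) and induce from it the standard Hermitian inner product on $S=\bigw\mathfrak{p}^-$. A short computation shows that $\{e_{-j}\}$ is then orthonormal in $\mathfrak{p}^-$, so the monomials $e_{-i_1}\wedge\cdots\wedge e_{-i_s}$ form an orthonormal basis of $S$; in particular the form on $S$ is positive definite, and with respect to it, exterior multiplication by $e_{-j}$ and contraction against $e_j$ (via the pairing $\langle\cdot,\cdot\rangle$ between $\mathfrak{p}^+$ and $\mathfrak{p}^-$) are mutually adjoint. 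Since, under the normalization of the Clifford relations of Section~\ref{Spinreprese}, $\gamma(e_{-j})$ is exterior multiplication by $e_{-j}$ and $\gamma(e_j)$ is precisely that contraction, this yields $\gamma(e_{-j})^{\mathrm{adj}}=\gamma(e_j)$. (This is essentially the content of \cite[Section 9.1.4]{pandzic} and \cite{renardpandzic}; I would either invoke it or spell out the wedge/contraction adjointness.)

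Endowing $M\otimes S$ with the tensor product of these two inner products, the verification is now formal: using $(A\otimes B)^{\mathrm{adj}}=A^{\mathrm{adj}}\otimes B^{\mathrm{adj}}$,
\[
(C^+)^{\mathrm{adj}}=\sum_j\pi(e_j)^{\mathrm{adj}}\otimes\gamma(e_{-j})^{\mathrm{adj}}=\sum_j\big(-\pi(\widetilde{e_j})\big)\otimes\gamma(e_j)=-\sum_j\pi(e_{-j})\otimes\gamma(e_j)=-C^-,
\]
and, taking adjoints once more, $(C^-)^{\mathrm{adj}}=-C^+$. Adding the two relations gives $D_{\mathfrak{g},\mathfrak{k}}(M)^{\mathrm{adj}}=(C^++C^-)^{\mathrm{adj}}=-C^--C^+=-D_{\mathfrak{g},\mathfrak{k}}(M)$, so $D_{\mathfrak{g},\mathfrak{k}}(M)$ is skew-adjoint.

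The only step that is not pure bookkeeping — and the one I would single out as the main obstacle — is calibrating the inner product on $S$ so that $\gamma(e_{-j})^{\mathrm{adj}}=\gamma(e_j)$ holds exactly on the nose: one must track the factors of $\sqrt2$ in \eqref{basesduales} against the normalization of Clifford multiplication and against the induced inner product on $\bigw\mathfrak{p}^-$. Everything after that is routine. (It is also true, though not needed for the statement, that this inner product on $S$ can be taken $\mathfrak{k}_0$-invariant, which is what makes the Hodge-type decomposition of $D_{\mathfrak{g},\mathfrak{k}}(M)$ work; combined with Lemma~\ref{identify} the identity $(C^+)^{\mathrm{adj}}=-C^-$ reads $d^{\mathrm{adj}}=-\partial$.)
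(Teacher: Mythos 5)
Your proof follows essentially the same route as the paper's: both endow $M\otimes S$ with the tensor-product inner product and reduce to the two adjoint relations $\pi(X)^{\mathrm{adj}}=-\pi(\widetilde X)$ and $\gamma(X)^{\mathrm{adj}}=\gamma(\widetilde X)$, then compute $(C^\pm)^{\mathrm{adj}}$ factorwise. The only difference is that the paper cites \cite{thesis,pandzic} for the existence of the Hermitian form on $S$ with $\gamma(X)^{\mathrm{adj}}=\gamma(\widetilde X)$, whereas you construct it explicitly via wedge/contraction adjointness on $\bigw\mathfrak{p}^-$, correctly flagging the $\sqrt2$-normalization as the one point that needs care.
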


\begin{proof} According to \cite{thesis,pandzic}, $S=\bigw \mathfrak{p}^-$ admits a Hermitian inner product $\langle\cdot,\cdot\rangle_S$ and
	\begin{equation*}
	\gamma(X)^\text{adj}=\gamma(\widetilde{X}) \text{ for every } X\in\mathfrak{p},
	\end{equation*}
    where $\widetilde{\cdot}$ stands for the conjugation with respect to $\mathfrak{g}_0$.
	On the other hand, by assumption and \eqref{adjointO}, there is a Hermitian inner product $\langle\cdot,\cdot\rangle_M$ on $M$ such that 
	\begin{equation*}
	\pi(X)^\text{adj}=-\pi(\widetilde{X})\text{ for every }X\in\mathfrak{p}.
	\end{equation*}
	On $M\otimes S$ define the natural inner product $\langle\cdot,\cdot\rangle$ coming from tensorizing $\langle\cdot,\cdot\rangle_S$ and $\langle\cdot,\cdot\rangle_M$, i.e.
	\begin{equation*}
	\langle v_1\otimes u_1,v_2\otimes u_2\rangle:=\langle v_1,v_2\rangle_M\langle u_1,u_2\rangle_S
	\end{equation*}
		for every $v_1\otimes u_1$, $v_2\otimes u_2\in M\otimes S$.
		Then, if $e_j$ and $e_{-j}$ are as in \eqref{basesduales}, we have 
		\begin{align*}
		\big(\pi(e_j)\otimes \gamma(e_{-j})\big)^\text{adj}&=\pi(e_j)^\text{adj}\otimes \gamma(e_{-j})^\text{adj}\\
		&=-\pi(e_{-j})\otimes \gamma(e_{j})
		\end{align*}
		so that $(C^+)^\text{adj}=-C^-$, $(C^-)^\text{adj}=-C^+$ and
	
		\begin{equation*}
		D_{\mathfrak{g},\mathfrak{k}}(M)^\text{adj}=-D_{\mathfrak{g},\mathfrak{k}}(M).
		\end{equation*}
\end{proof}

\begin{lemma}\label{dcmp}
	Let $(\mathfrak{g},\mathfrak{k})$ be a Hermitian symmetric pair and $M$ a unitary module in $\mathcal{O}$. Then 
	
	\begin{equation*}M\otimes S =\ker D_{\mathfrak{g},\mathfrak{k}}(M)\oplus \mathrm{im}\hspace{0.5mm}D_{\mathfrak{g},\mathfrak{k}}(M).
	\end{equation*}
\end{lemma}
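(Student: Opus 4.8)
The plan is to combine the generalized eigenspace decomposition of Corollary~\ref{decom} with the skew-adjointness of $D:=D_{\mathfrak{g},\mathfrak{k}}(M)$ proved in the previous lemma. By Corollary~\ref{decom} we have the algebraic direct sum $M\otimes S=\bigoplus_{c\in\mathbb{R}}\big(M\otimes S\big)(c)$ into generalized eigenspaces of $D^2$. Since $D$ commutes with $D^2$, it preserves every summand $\big(M\otimes S\big)(c)$, so $\ker D=\bigoplus_c\ker\big(D|_{(M\otimes S)(c)}\big)$ and $\mathrm{im}\,D=\bigoplus_c\mathrm{im}\big(D|_{(M\otimes S)(c)}\big)$; hence it suffices to treat each summand $\big(M\otimes S\big)(c)$ separately.

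The first step is to promote ``generalized eigenspace'' to ``honest eigenspace''. Because $D$ is skew-adjoint for the positive-definite Hermitian inner product on $M\otimes S$, the operator $D^2$ is formally self-adjoint, $\langle D^2x,y\rangle=-\langle Dx,Dy\rangle=\langle x,D^2y\rangle$. Thus on the invariant subspace $\big(M\otimes S\big)(c)$ the operator $N:=D^2-c$ is formally self-adjoint and nilpotent, and a formally self-adjoint nilpotent operator on an inner product space vanishes: picking $k$ with $N^{2^k}=0$ and using $\lVert N^{2^{k-1}}x\rVert^2=\langle N^{2^k}x,x\rangle=0$ gives $N^{2^{k-1}}=0$, and one descends to $N=0$. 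Hence $D^2$ acts by the scalar $c$ on $\big(M\otimes S\big)(c)$.

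The second step is a dichotomy. If $c=0$, then $\lVert Dx\rVert^2=-\langle D^2x,x\rangle=0$ for $x\in\big(M\otimes S\big)(0)$, so $D$ vanishes on this summand, which then coincides with its kernel and contributes nothing to the image. If $c\neq0$, then $D\cdot(c^{-1}D)=c^{-1}D^2=\mathrm{id}$ on $\big(M\otimes S\big)(c)$, so $D$ restricts to an invertible operator there; that summand coincides with its image and meets $\ker D$ trivially. Summing over $c$ yields $\ker D=\big(M\otimes S\big)(0)$ and $\mathrm{im}\,D=\bigoplus_{c\neq0}\big(M\otimes S\big)(c)$, and these are complementary summands in the decomposition of Corollary~\ref{decom}, so $M\otimes S=\ker D\oplus\mathrm{im}\,D$.

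I do not anticipate a serious obstacle: Corollary~\ref{decom} localizes the problem to each generalized eigenspace, where it becomes purely algebraic, and the skew-adjointness lemma provides the inner product that annihilates the nilpotent part of $D^2$. The only delicate point, since the summands $\big(M\otimes S\big)(c)$ need not be finite-dimensional, is the vanishing of that nilpotent part, and the dyadic argument above handles it with no completeness or dimension hypothesis. One should also note, for the record, that the inner product constructed in the preceding lemma is genuinely positive definite --- it is the tensor product of the positive-definite form on the unitary module $M$ with the positive-definite form on $S$ --- since positivity is what makes the implication $\lVert\cdot\rVert^2=0\Rightarrow(\cdot)=0$, used twice above, legitimate.
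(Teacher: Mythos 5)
Your proof is correct but takes a genuinely different route from the paper's. The paper proves $\ker D_{\mathfrak{g},\mathfrak{k}}(M)=(M\otimes S)(0)$ via $\ker D^2=\ker D$ (from skew-adjointness) and then $\ker D^k=\ker D$ for all $k$, and proves $\mathrm{im}\,D=\bigoplus_{c\neq 0}(M\otimes S)(c)$ by asserting $\mathrm{im}\,D=(\ker D)^\perp$ — a Hodge-type identity that is automatic in finite dimensions but requires an implicit reduction to the finite-dimensional weight spaces of $M\otimes S$ here. You instead prove the strictly stronger statement that $D^2$ acts by the scalar $c$ on each $(M\otimes S)(c)$, i.e.\ the generalized eigenspaces are honest eigenspaces; from this, $D=0$ on $(M\otimes S)(0)$ and $D$ is invertible on each $(M\otimes S)(c)$, $c\neq 0$, fall out immediately without invoking $\mathrm{im}\,D=(\ker D)^\perp$. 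This is arguably the cleaner and more self-contained argument, since it never relies on a ``closed range'' fact that is delicate in infinite dimensions. One small imprecision: you assert that $N:=D^2-c$ is (globally) nilpotent on $(M\otimes S)(c)$ by ``picking $k$ with $N^{2^k}=0$''; a priori $N$ is only locally nilpotent, since the exponent in the definition of a generalized eigenspace depends on the vector. You could justify global nilpotency via the finite generation from Corollary~\ref{decom} together with the $\mathfrak{h}$-equivariance of $N$, or, more simply, run your dyadic descent pointwise: for each $x$ pick $k$ with $N^{2^k}x=0$, then $\lVert N^{2^{k-1}}x\rVert^2=\langle N^{2^k}x,x\rangle=0$ and so on down to $Nx=0$. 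Either fix makes the argument airtight.
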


\begin{proof}
According to Corollary \ref{decom}, 
\begin{equation*}
M\otimes S=\big(M\otimes S\big)(0)\oplus\bigoplus\limits_{c\neq0}\big(M\otimes S\big)(c),
\end{equation*}
where $\big(M\otimes S\big)(c)$, $c\in\mathbb{R}$, stands for the generalized eigenspace of $D_{\mathfrak{g},\mathfrak{k}}(M)^2$ of eigenvalue $c$. It suffices to show that 
\begin{align*}
\ker D_{\mathfrak{g},\mathfrak{k}}(M)&=\big(M\otimes S\big)(0)\\
\text{ and }\mathrm{im}\hspace{0.5mm}D_{\mathfrak{g},\mathfrak{k}}(M)&=\bigoplus\limits_{c\neq0}\big(M\otimes S\big)(c).
\end{align*}

For the first equality, since $D_{\mathfrak{g},\mathfrak{k}}(M)$ is skew-adjoint, $\ker D_{\mathfrak{g},\mathfrak{k}}(M)^2=\ker D_{\mathfrak{g},\mathfrak{k}}(M)$. Therefore, one easily checks that $\ker D_{\mathfrak{g},\mathfrak{k}}(M)^k=\ker D_{\mathfrak{g},\mathfrak{k}}(M)$ for every $k>0$ so that $\ker D_{\mathfrak{g},\mathfrak{k}}(M)=\big(M\otimes S\big)(0)$.

For the second equality, using again the fact that $D_{\mathfrak{g},\mathfrak{k}}(M)$ is skew-adjoint and the first equality, we obtain
\begin{align*}
\mathrm{im}\hspace{0.5mm}D_{\mathfrak{g},\mathfrak{k}}(M)&=\ker D_{\mathfrak{g},\mathfrak{k}}(M)^\perp\\
&=\big(M\otimes S\big)(0)^\perp\\
&=\bigoplus\limits_{c\neq0}\big(M\otimes S\big)(c)
\end{align*}
	\end{proof}

\begin{proposition}\label{manyresults}
	Let $(\mathfrak{g},\mathfrak{k})$ be a Hermitian symmetric pair and $M$ a unitary module in $\mathcal{O}$. Then 
	\begin{itemize}
		\item[(i)] $\ker C^+=\mathrm{im}\hspace{0.5mm}C^+ \oplus\ker D_{\mathfrak{g},\mathfrak{k}}(M)$
		\item[(ii)] $\ker C^-=\mathrm{im}\hspace{0.5mm}C^- \oplus\ker D_{\mathfrak{g},\mathfrak{k}}(M)$.
	\end{itemize}
The above direct sums are orthogonal with respect to $\langle\cdot,\cdot\rangle$.
\end{proposition}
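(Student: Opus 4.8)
The plan is to establish (i) and (ii) by exploiting the Hodge-theoretic structure already in place: $D_{\mathfrak{g},\mathfrak{k}}(M) = C^+ + C^-$ with $(C^\pm)^{\mathrm{adj}} = -C^{\mp}$, together with the fundamental fact that $C^+ = d$ and $C^- = \partial$ under the identification \eqref{identification}. The key algebraic input is that $(C^+)^2 = (C^-)^2 = 0$; this follows from Lemma \ref{identify} (since $d$ and $\partial$ are differentials), or directly from the $\mathfrak{p}$-abelian property of the Hermitian symmetric pair, which forces $\gamma(c)=0$ and makes the $C^\pm$ nilpotent. Consequently $D_{\mathfrak{g},\mathfrak{k}}(M)^2 = C^+C^- + C^-C^+$, an analogue of the Laplacian. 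I would then observe that, by Lemma \ref{dcmp}, $M\otimes S = \ker D \oplus \mathrm{im}\, D$ orthogonally, and $\ker D = \ker D^2 = \ker C^+ \cap \ker C^-$ (the last equality because, for the skew-adjoint $D$, $Dv=0 \iff \langle D^2 v, v\rangle = -\|C^+ v\|^2 - \|C^- v\|^2 = 0$, using $(C^+)^{\mathrm{adj}} = -C^-$; wait—more carefully, $\langle D^2 v, v\rangle = \langle (C^+C^- + C^-C^+)v, v\rangle = \langle C^- v, (C^+)^{\mathrm{adj}\,*}\ldots\rangle$, so one computes $\langle (C^+C^-+C^-C^+)v,v\rangle = -\|C^+ v\|^2 - \|C^- v\|^2$, forcing $C^\pm v = 0$).

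Next I would prove the two orthogonal decompositions. For (i): the inclusion $\mathrm{im}\, C^+ \oplus \ker D \subseteq \ker C^+$ holds because $(C^+)^2 = 0$ gives $\mathrm{im}\,C^+ \subseteq \ker C^+$, and $\ker D \subseteq \ker C^+$ by the computation above. Orthogonality of $\mathrm{im}\, C^+$ and $\ker D$: if $w \in \ker D$ then $C^- w = 0$, so for any $v$, $\langle C^+ v, w\rangle = -\langle v, C^- w\rangle = 0$. For the reverse inclusion $\ker C^+ \subseteq \mathrm{im}\, C^+ \oplus \ker D$: take $x \in \ker C^+$. By Lemma \ref{dcmp}, write $x = x_0 + Dy$ with $x_0 \in \ker D$ and $y \in \mathrm{im}\, D \subseteq (\ker D)^\perp$; then $Dy = x - x_0 \in \ker C^+$, so $C^+ Dy = 0$, i.e. $C^+(C^+y + C^- y) = C^+ C^- y = 0$. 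Then $0 = \langle C^+ C^- y, y\rangle = -\|C^- y\|^2$ — hmm, this needs $\langle C^+ u, y\rangle = -\langle u, C^- y\rangle$, so $\langle C^+ C^- y, y \rangle = -\langle C^- y, C^- y\rangle = -\|C^- y\|^2$, hence $C^- y = 0$ and $Dy = C^+ y \in \mathrm{im}\, C^+$. Thus $x = x_0 + C^+ y \in \ker D \oplus \mathrm{im}\, C^+$, as required. The argument for (ii) is verbatim the same with the roles of $C^+$ and $C^-$ interchanged.

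Finally I would note that the directness of the sum $\mathrm{im}\, C^+ \oplus \ker D$ (beyond orthogonality) follows since orthogonal subspaces of an inner-product space intersect trivially, and the same for $\mathrm{im}\, C^- \oplus \ker D$. The main obstacle I anticipate is purely technical rather than conceptual: one must be careful that all the adjoint and inner-product manipulations are legitimate on the possibly infinite-dimensional space $M \otimes S$. Since $M$ is unitary and lies in $\mathcal{O}$, one works weight-space by weight-space — each weight space of $M\otimes S$ is finite-dimensional by Lemma \ref{le1}, the operators $C^\pm$ and $D$ shift weights in a controlled way, and the orthogonal decomposition of Lemma \ref{dcmp} already handles the global structure — so the finite-dimensional Hodge theory applies on each (finite-dimensional) generalized $D^2$-eigenspace, and the decompositions assemble. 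This reduction, together with invoking $(C^\pm)^2 = 0$ and Lemma \ref{dcmp}, is the crux; everything else is the standard Hodge-decomposition bookkeeping.
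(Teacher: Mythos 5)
Your proof is correct and is precisely the standard Hodge-theoretic argument that the paper invokes by reference to \cite[Lemma 9.3.3 \& Proposition 9.3.4]{pandzic}: it rests on $(C^\pm)^2=0$ (from the abelian $\mathfrak{p}^\pm$ of the Hermitian pair), the adjoint relations $(C^\pm)^{\mathrm{adj}}=-C^\mp$, and the orthogonal decomposition $M\otimes S=\ker D_{\mathfrak{g},\mathfrak{k}}(M)\oplus\mathrm{im}\,D_{\mathfrak{g},\mathfrak{k}}(M)$ of Lemma~\ref{dcmp}. The key computations $\langle D^2v,v\rangle=-\lVert C^+v\rVert^2-\lVert C^-v\rVert^2$ (giving $\ker D=\ker C^+\cap\ker C^-$) and $\langle C^+C^-y,y\rangle=-\lVert C^-y\rVert^2$ (giving $C^-y=0$ in the reverse inclusion) are both correct, and your remark about reducing to finite-dimensional weight spaces handles the only genuine subtlety.
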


\begin{proof} The proof is similar to the proof in the case of a unitary $(\mathfrak{g},K)$-modules \cite[Lemma 9.3.3 \& Proposition 9.3.4]{pandzic}.
\end{proof}

Consequently, we obtain
\begin{equation*}
H_D^{\mathfrak{g},\mathfrak{k}}(M)\cong \ker D_{\mathfrak{g},\mathfrak{k}}(M)\cong H^\cdot(\mathfrak{p}^+,M)\otimes \mathbb{C}_{\rho-\rho_\mathfrak{k}}\cong H_\cdot(\mathfrak{p}^-,M)\otimes \mathbb{C}_{\rho-\rho_\mathfrak{k}}
\end{equation*}
which is exactly the statement of Theorem \ref{cohom}. Here, the first equivalence is due to Lemma \ref{dcmp} while the second and the third ones due to Lemma \ref{identify} and Proposition \ref{manyresults}.

\section{Higher Dirac cohomology and index  \\ \hspace{1.5mm}for the BGG category $\mathcal{O}$}
Let $\mathfrak{g}$ be a complex semisimple Lie algebra and $\mathfrak{h}$ a Lie subalgebra of $\mathfrak{g}$ satisfying \eqref{conditionh}.
Let $\mathfrak{h}$ contain a Cartan subalgebra $\mathfrak{t}$ of $\mathfrak{g}$, so that the spin module $S$ has a splitting into $S=S^+\oplus S^-$. Let $X_1,X_2,X_3$ and $X$ be $\mathfrak{g}$-modules such that the restrictions $(X_1)_{\mid\mathfrak{h}}$, $(X_2)_{\mid\mathfrak{h}}$,$(X_3)_{\mid\mathfrak{h}}$ and $X_{\mid\mathfrak{h}}$ to $\mathfrak{h}$ decompose into algebraic finite-multiplicity direct sums of finite dimensional simple $\mathfrak{h}$-modules. Assume that $X_1,X_2,X_3$ fit into a short exact sequence
\begin{equation*}
0\longrightarrow X_1\longrightarrow X_2\longrightarrow X_3\longrightarrow 0.
\end{equation*}
In the case where each $X_1,X_2,X_3$, and $X$ has infinitesimal character (or it is a direct sum of modules having infinitesimal characters), we can construct $\mathfrak{h}$-morphisms so that the following diagram of Dirac cohomology spaces is exact \cite[Subsection 9.6.2]{pandzic}:

	\begin{equation}\label{exactcircle}
\begin{tikzcd}[cramped, sep=huge]
H_D^{\mathfrak{g},\mathfrak{h}}(X_1)^+\arrow[r] 
& H_D^{\mathfrak{g},\mathfrak{h}}(X_2)^+ \arrow[r] &H_D^{\mathfrak{g},\mathfrak{h}}(X_3)^+ \arrow[d]\\
H_D^{\mathfrak{g},\mathfrak{h}}(X_3)^-\arrow[u]
&\arrow[l]H_D^{\mathfrak{g},\mathfrak{h}}(X_2)^- &\arrow[l]H_D^{\mathfrak{g},\mathfrak{h}}(X_1)^-
\end{tikzcd}
\end{equation}
while for the Dirac index $I_D(X)$ of $X$, we have \cite{diracindex}:
\begin{equation}\label{higherrelat}
I_D(X)=X\otimes S^+-X\otimes S^-.
\end{equation}
%\begin{remark} The maps in \eqref{exactcircle} are not natural
%\end{remark}
Nevertheless, when $X_1$, $X_2$, $X_3$ and $X$, instead of infinitesimal characters, have generalized infinitesimal characters, the above statements may fail \cite[Section 2]{somberg}.

 In order to overcome this problem, Pand\v zi\'c and Somberg defined a notion of higher Dirac cohomology \cite{somberg}:

\begin{definition}
	For $\mathfrak{g}$, $\mathfrak{h}$ and $X$ as above, we call $k$-th higher Dirac cohomology of $X$ the $\mathfrak{h}$-module
	\begin{equation*} 
	H^k_{\mathrm{top}}(X):=\frac{\ker D_{\mathfrak{g},\mathfrak{h}}(X)^{2k+1}}{\ker D_{\mathfrak{g},\mathfrak{h}}(X)^{2k+1}\cap\mathrm{im}\hspace{0.5mm}D_{\mathfrak{g},\mathfrak{h}}(X)+\ker D_{\mathfrak{g},\mathfrak{h}}(X)^{2k}}
	\end{equation*}
	and higher Dirac cohomology of $X$ the $\mathfrak{h}$-module
	\begin{equation*}
	H_{\mathrm{top}}(X):=\bigoplus\limits_{k\geq 0}H_{\mathrm{top}}^k(X).
	\end{equation*}
\end{definition}
Using the above definition of higher Dirac cohomology, they introduced a notion of higher Dirac index:

\begin{definition}For $\mathfrak{g}$, $\mathfrak{h}$ and $X$ as above, the virtual $\mathfrak{h}$-module
	\begin{equation*}
	I_\mathrm{top}(X):=H_{\mathrm{top}}(X)^+-H_{\mathrm{top}}(X)^-
	\end{equation*}
 is called the higher Dirac index of $X$.
\end{definition}

\begin{remark}Note that in the case where $X$ has infinitesimal character, $H_D^{\mathfrak{g},\mathfrak{h}}(X)$ (respectively $I_D(X)$) and $H_{\mathrm{top}}(X)$ (respectively $I_\mathrm{top}(X)$) coincide. 
	\end{remark}

Then, they proved that if, instead of the standard notions of Dirac cohomology $H_D^{\mathfrak{g},\mathfrak{k}}(X)$ and index $I_D(X)$, we use the higher Dirac cohomology $H_\mathrm{top}(X)$ and index $I_\mathrm{top}(X)$, the statements for \eqref{exactcircle} and \eqref{higherrelat} remain true whenever $X_1,X_2,X_3$ and $X$ have generalized infinitesimal characters. The proofs of their results are based on the fact that the generalized kernel $\big(X\otimes S\big)(0)$ of $D_{\mathfrak{g},\mathfrak{h}}(X)$ is finite dimensional and can be decomposed into Jordan blocks for $D_{\mathfrak{g},\mathfrak{k}}(X)$. \begin{remark} For $X_1$, $X_2$ and $X_3$ having generalized infinitesimal characters, the maps used in the corresponding diagram \eqref{exactcircle} depend on the chosen Jordan decomposition and, so, are noncanonical.
\end{remark}
 
In this section, we prove similar results in the case where, instead of $\mathfrak{g}$-modules $X_1,X_2,X_3$ and $X$ satisfying the previous finiteness condition for their restrictions to $\mathfrak{h}$, we have $\mathfrak{g}$-modules $M_1,M_2,M_3$ and $M$ belonging to $\mathcal{O}$. Of course, we can again have a notion of higher Dirac cohomology $H_\mathrm{top}(M)$ and index $I_\mathrm{top}(M)$. Nevertheless, the generalized kernel $\big(M\otimes S)(0)$ of $D_{\mathfrak{g},\mathfrak{h}}(M)$ need not be finite dimensional and may not have a Jordan decomposition for $D_{\mathfrak{g},\mathfrak{h}}(M)$. In order to overcome this problem, we decompose $\big(M\otimes S\big)(0)$ into a direct sum of weight subspaces 
\begin{equation*}
\big(M\otimes S\big)(0)=\bigoplus\limits_{\mu\in\mathfrak{t}^*}	[\big(M\otimes S\big)(0)]_\mu,
\end{equation*}
where each weight subspace $[\big(M\otimes S\big)(0)]_\mu$ is finite dimensional. Since $\mathfrak{t}\subseteq\mathfrak{h}$ and $D_{\mathfrak{g},\mathfrak{h}}(M)$ is $\mathfrak{h}$-equivariant, $D_{\mathfrak{g},\mathfrak{h}}(M)$ is $\mathfrak{t}$-equivariant so that it preserves each weight subspace. Now, each weight subspace $[\big(M\otimes S\big)(0)]_\mu$ has a Jordan block decomposition for $D_{\mathfrak{g},\mathfrak{h}}(M)$ and we can apply the methods used in \cite{somberg} and construct maps so that

	\begin{equation}\label{exactcircleweight}
\begin{tikzcd}[cramped, sep=huge]
H_\mathrm{top}(M_1)^+_\mu\arrow[r] 
& H_\mathrm{top}(M_2)^+_\mu \arrow[r] &H_\mathrm{top}(M_3)^+_\mu \arrow[d]\\
H_\mathrm{top}(M_3)^-_\mu\arrow[u]
&\arrow[l]H_\mathrm{top}(M_2)^-_\mu &\arrow[l]H_\mathrm{top}(M_1)^-_\mu
\end{tikzcd}
\end{equation}
forms an exact circle of weight subspaces, i.e. of $\mathfrak{t}$-modules, while for the weight subspaces $I_{\mathrm{top}}(M)_\mu$ and $\big(M\otimes S^\pm\big)_\mu$ we have
\begin{equation}\label{higherrelatweight}
I_\mathrm{top}(M)_\mu=\big(M\otimes S^+\big)_\mu-\big(M\otimes S^-\big)_\mu.
\end{equation}
%Nevertheless, we have lost the $\mathfrak{h}$-module structure and it is not obvious how we can recover it. Of course, one could take the direct sums over $\mu\in\mathfrak{t}^*$ but again the diagram which we will construct will be a diagram of $\mathfrak{t}$-modules while we are interested in the $\mathfrak{h}$-module structure of each space. 
%
%In the proofs of the following results, we show how one, having used the finite dimensionality of the weight subspaces, can, then, recover the $\mathfrak{h}$-module structure of each space.

We start with the result concerning the higher Dirac index which is the analogue of \cite[Theorem 1.11]{somberg}.
%prove the statements \eqref{exactcircle} and \eqref{higherrelat} for the weight subspaces $H_\mathrm{top}(M_i)_\mu$, where $M_i\in\mathcal{O}$ for $i=1,2,3$.

\begin{proposition}\label{propindex}
	Let $M\in\mathcal{O}$. Then
	\begin{equation*}
	I_{\mathrm{top}}(M)=M\otimes S^+-M\otimes S^- 
	\end{equation*}
\end{proposition}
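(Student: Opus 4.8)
The plan is to reduce the identity $I_{\mathrm{top}}(M) = M\otimes S^+ - M\otimes S^-$ to a weight-by-weight statement, namely \eqref{higherrelatweight}, and to prove the latter by a Jordan-block bookkeeping argument on each finite-dimensional piece $[\big(M\otimes S\big)(0)]_\mu$ of the generalized kernel of $D_{\mathfrak{g},\mathfrak{h}}(M)$. First I would record that, as a $\mathfrak{t}$-module, both sides decompose as direct sums over $\mu\in\mathfrak{t}^*$ of their weight subspaces, and that since $\mathfrak{t}\subseteq\mathfrak{h}$ the higher Dirac cohomology $H_{\mathrm{top}}(M)^\pm$ is a weight module with finite-dimensional weight spaces, so $I_{\mathrm{top}}(M) = \sum_\mu I_{\mathrm{top}}(M)_\mu$ in the appropriate Grothendieck group; hence it suffices to establish \eqref{higherrelatweight} for every fixed $\mu$. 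Here I would invoke Corollary \ref{decom}: $M\otimes S$ decomposes into generalized eigenspaces $\big(M\otimes S\big)(c)$ of $D_{\mathfrak{g},\mathfrak{h}}(M)^2$, the kernel powers $\ker D_{\mathfrak{g},\mathfrak{h}}(M)^{2k+1}$ all live inside $\big(M\otimes S\big)(0)$, and this generalized kernel is a weight module each of whose weight spaces $[\big(M\otimes S\big)(0)]_\mu$ is finite-dimensional and, by $\mathfrak{t}$-equivariance of $D_{\mathfrak{g},\mathfrak{h}}(M)$, preserved by $D_{\mathfrak{g},\mathfrak{h}}(M)$.

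Next I would fix $\mu$ and work entirely inside the finite-dimensional space $V_\mu := [\big(M\otimes S\big)(0)]_\mu$, on which $D := D_{\mathfrak{g},\mathfrak{h}}(M)$ is a nilpotent operator (all eigenvalues of $D^2$ are $0$, hence $D$ is nilpotent). Since $D$ is odd for the $\mathbb{Z}_2$-grading $V_\mu = V_\mu^+\oplus V_\mu^-$ (it interchanges $X\otimes S^\pm$), one can choose a Jordan basis of $V_\mu$ for $D$ consisting of strings that alternate between the $+$ and $-$ parts. Following the combinatorial analysis in \cite{somberg}, each Jordan block of odd size $2k+1$ contributes exactly one dimension to $H^k_{\mathrm{top}}(M)_\mu$ (sitting in the parity of the middle/"top" vector of the string) and nothing to any other $H^j_{\mathrm{top}}(M)_\mu$, whereas each Jordan block of even size contributes $0$ to every $H^j_{\mathrm{top}}(M)_\mu$. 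Crucially, the contribution of an odd block to $H_{\mathrm{top}}(M)^+_\mu - H_{\mathrm{top}}(M)^-_\mu$ equals its contribution to $\dim V_\mu^+ - \dim V_\mu^-$ — for an odd string the two parities differ by exactly one vector, namely the one singled out by the higher cohomology — and an even block contributes $0$ to both differences, since an even string is balanced between the two parities. Summing over all Jordan blocks gives $I_{\mathrm{top}}(M)_\mu = \dim V_\mu^+ - \dim V_\mu^- = \big(M\otimes S^+\big)_\mu - \big(M\otimes S^-\big)_\mu$ in the Grothendieck group of $\mathfrak{t}$-modules, which is \eqref{higherrelatweight}. Summing over $\mu$ then yields $I_{\mathrm{top}}(M) = M\otimes S^+ - M\otimes S^-$.

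The main obstacle is the infinite-dimensionality of the generalized kernel $\big(M\otimes S\big)(0)$, which prevents a direct global Jordan decomposition as in \cite{somberg}; the device that resolves it is precisely the weight-space refinement, using $\mathfrak{t}\subseteq\mathfrak{h}$ to cut $\big(M\otimes S\big)(0)$ into finite-dimensional $D$-stable pieces. A secondary technical point I would need to check carefully is that passing to the weight-space level is harmless for the index, i.e. that $H_{\mathrm{top}}(M)^\pm$ really is a direct sum of its finite-dimensional weight subspaces and that $[H_{\mathrm{top}}(M)^\pm]_\mu = H_{\mathrm{top}}(M)^\pm_\mu$ as defined from the filtration on $V_\mu$ — this follows because all of $\ker D^{2k+1}$, $\ker D^{2k}$ and $\mathrm{im}\, D$ are $\mathfrak{t}$-stable, so the quotient defining $H^k_{\mathrm{top}}(M)$ commutes with taking $\mu$-weight spaces. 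Once these points are in place, the Jordan-block count is routine and identical to the one in \cite{somberg}.
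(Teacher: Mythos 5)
Your strategy matches the paper's almost step for step — restrict to finite-dimensional weight spaces of the generalized kernel, decompose into Jordan blocks for the nilpotent operator $D$, and do the parity bookkeeping of Lemma \ref{lemdecomp}. The parity analysis itself is correct: an odd block of size $2k+1$ contributes exactly its top vector $W_{2k+1}$ to $H^k_{\mathrm{top}}$ and nothing elsewhere, an even block contributes nothing, and the $+/-$ imbalance of the full string equals the parity of that single top vector. One small slip of language: the surviving vector is the \emph{top} of the string, not the middle.

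There is, however, one genuine gap. You set $V_\mu:=[\big(M\otimes S\big)(0)]_\mu$, carry out the Jordan argument on $V_\mu$, and then conclude with
\begin{equation*}
\dim V_\mu^+ - \dim V_\mu^- = \big(M\otimes S^+\big)_\mu - \big(M\otimes S^-\big)_\mu,
\end{equation*}
but this is not an identity — $V_\mu$ is only the $c=0$ piece of the full weight space $[M\otimes S]_\mu = \bigoplus_c [\big(M\otimes S\big)(c)]_\mu$, and a priori the summands with $c\neq 0$ also contribute to the right-hand side. You never argue that those contributions cancel. This is exactly the first half of the paper's proof: for $c\neq 0$, if $x\in[\big(M\otimes S\big)(c)^\pm]_\mu$ and $Dx=0$ then $D^2x=0$, which forces $x=0$ on a generalized eigenspace with nonzero eigenvalue, so $D$ restricts to an injection, hence an isomorphism of the finite-dimensional spaces $[\big(M\otimes S\big)(c)^\pm]_\mu \to [\big(M\otimes S\big)(c)^\mp]_\mu$; therefore each $c\neq 0$ block contributes $0$ to $\dim[M\otimes S^+]_\mu - \dim[M\otimes S^-]_\mu$. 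The step is not difficult, but without it your final equality is simply asserted, not proved. Note also that you cannot absorb the $c\neq 0$ parts into your Jordan-block language, since $D$ is not nilpotent there — the cancellation argument is genuinely of a different flavor from the $c=0$ bookkeeping and needs to be stated separately, as the paper does.
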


\begin{proof}[Proof of Proposition \ref{propindex}]
	Let $\big(M\otimes S\big)(c)$ be the generalized eigenspace of $D_{\mathfrak{g},\mathfrak{h}}(M)^2$ with eigenvalue $c\in\mathbb{R}$. Since $D_{\mathfrak{g},\mathfrak{h}}(M)^2$ is even in the Clifford factor, $\big(M\otimes S\big)(c)$ decomposes into
	\begin{equation*}
	\big(M\otimes S\big)(c):=\big(M\otimes S\big)(c)^+\oplus\big(M\otimes S\big)(c)^-.
	\end{equation*}
	Moreover, since $D_{\mathfrak{g},\mathfrak{h}}(M)$ is $\mathfrak{t}$-equivariant, one obtains a more refined decomposition in terms of weight subspaces:
		\begin{equation*}
	[\big(M\otimes S\big)(c)]_\mu:=[\big(M\otimes S\big)(c)^+]_\mu\oplus[\big(M\otimes S\big)(c)^-]_\mu.
	\end{equation*}
	
	If $c\neq0$, $D_{\mathfrak{g},\mathfrak{h}}(M)$ maps injectively $[\big(M\otimes S\big)(c)^\pm]_\mu$ into $[\big(M\otimes S\big)(c)^\mp]_\mu$. More precisely, if $x\in[\big(M\otimes S\big)(c)^\pm]_\mu$ and $D_{\mathfrak{g},\mathfrak{h}}(M)x=0$, then $D_{\mathfrak{g},\mathfrak{h}}(M)^2x=0$ and so $x$ must be $0$.
	Since $[\big(M\otimes S\big)(c)^\pm]_\mu$ is finite dimensional, one deduces that the restriction 
	\begin{equation*}
	D_{\mathfrak{g},\mathfrak{h}}(M):[\big(M\otimes S\big)(c)^\pm]_\mu\rightarrow[\big(M\otimes S\big)(c)^\mp]_\mu
	\end{equation*}
	is an isomorphism. Hence, $\big(M\otimes S\big)(c)^+\cong\big(M\otimes S\big)(c)^-$ for every $c\neq0$ so that
	\begin{equation*}
	M\otimes S^+-M\otimes S^-=\big(M\otimes S\big)(0)^+-\big(M\otimes S\big)(0)^-.
	\end{equation*}
	The following lemma completes the proof.
	\end{proof}

	\begin{lemma}\label{lemdecomp} Let $M\in\mathcal{O}$. Then
		\begin{equation*}\big(M\otimes S\big)(0)^+-\big(M\otimes S\big)(0)^-=H_{\mathrm{top}}(M)^+-H_{\mathrm{top}}(M)^-.
		\end{equation*}
		\end{lemma}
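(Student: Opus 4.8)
The plan is to reduce to the finite-dimensional picture one weight at a time, exactly in the spirit of \cite{somberg}, and then pass to the direct sum over $\mu$. First I would recall from Corollary \ref{decom} that $M\otimes S=\big(M\otimes S\big)(0)\oplus\bigoplus_{c\neq0}\big(M\otimes S\big)(c)$, and that by Proposition \ref{propindex} (its proof) the contributions of the eigenspaces with $c\neq0$ cancel in the virtual $\mathfrak{h}$-module $M\otimes S^+-M\otimes S^-$; so the entire content is concentrated on the generalized kernel $\big(M\otimes S\big)(0)$. Since $\mathfrak{t}\subseteq\mathfrak{h}$ and $D_{\mathfrak{g},\mathfrak{h}}(M)$ is $\mathfrak{h}$-equivariant, it is $\mathfrak{t}$-equivariant, hence preserves each weight subspace $[\big(M\otimes S\big)(0)]_\mu$, and each such weight subspace is finite dimensional because $M\in\mathcal{O}$. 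Moreover $D_{\mathfrak{g},\mathfrak{h}}(M)$ is odd in the Clifford factor, so on each finite-dimensional $[\big(M\otimes S\big)(0)]_\mu$ it is a nilpotent operator interchanging the $+$ and $-$ parts $[\big(M\otimes S\big)(0)^\pm]_\mu$.

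Next I would work inside a fixed finite-dimensional weight space $V_\mu:=[\big(M\otimes S\big)(0)]_\mu=V_\mu^+\oplus V_\mu^-$ with $d:=D_{\mathfrak{g},\mathfrak{h}}(M)|_{V_\mu}$ nilpotent and odd. Choose a Jordan basis for $d$ on $V_\mu$; this is possible precisely because $V_\mu$ is finite dimensional — this is the step that forced the reduction to weight spaces, since $\big(M\otimes S\big)(0)$ itself need not be finite dimensional and may not admit a Jordan decomposition. For a single Jordan block of size $n$ the alternating trace $\dim(\text{block})^+-\dim(\text{block})^-$ is $0$ if $n$ is even and $\pm1$ if $n$ is odd, and a short computation shows that the same Jordan block contributes $\pm1$ to $H_{\mathrm{top}}(M)^+_\mu-H_{\mathrm{top}}(M)^-_\mu$ when $n$ is odd (the surviving class lies in $\ker d^{2k+1}/(\ker d^{2k+1}\cap\mathrm{im}\,d+\ker d^{2k})$ with $n=2k+1$, in the appropriate parity) and $0$ when $n$ is even. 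Hence for each $\mu$,
\begin{equation*}
[\big(M\otimes S\big)(0)^+]_\mu-[\big(M\otimes S\big)(0)^-]_\mu=[H_{\mathrm{top}}(M)^+]_\mu-[H_{\mathrm{top}}(M)^-]_\mu
\end{equation*}
as virtual $\mathfrak{t}$-modules; this is precisely the weight-space identity \eqref{higherrelatweight} combined with what was shown in the proof of Proposition \ref{propindex}.

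Finally I would assemble the pieces: summing the displayed identity over all $\mu\in\mathfrak{t}^*$ and using that both $\big(M\otimes S\big)(0)$ and $H_{\mathrm{top}}(M)$ decompose as $\mathfrak{t}$-modules into their weight subspaces, one gets the equality $\big(M\otimes S\big)(0)^+-\big(M\otimes S\big)(0)^-=H_{\mathrm{top}}(M)^+-H_{\mathrm{top}}(M)^-$ as virtual $\mathfrak{h}$-modules. Here one should note that $\ker D_{\mathfrak{g},\mathfrak{h}}(M)^{2k+1}$, $\mathrm{im}\,D_{\mathfrak{g},\mathfrak{h}}(M)$ and $\ker D_{\mathfrak{g},\mathfrak{h}}(M)^{2k}$ are all $\mathfrak{h}$-submodules (by $\mathfrak{h}$-equivariance of $D_{\mathfrak{g},\mathfrak{h}}(M)$), so $H_{\mathrm{top}}^k(M)$ is genuinely an $\mathfrak{h}$-module and the weightwise identity upgrades to one in the Grothendieck group of $\mathcal{O}(\mathfrak{h})$; the sum is finite in each weight by Corollary \ref{decom}. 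The main obstacle is not any single computation but making the Jordan-block bookkeeping rigorous while it lives only on the finite-dimensional weight pieces and then checking that the reassembly respects the full $\mathfrak{h}$-action rather than merely the $\mathfrak{t}$-action — i.e. that the exponents $2k$ and $2k+1$ in the definition of $H^k_{\mathrm{top}}$ interact correctly with the grading, which is exactly where the parity of the Jordan block sizes enters.
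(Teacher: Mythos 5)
Your proposal is correct and follows essentially the same approach as the paper: reduce to formal characters by working weight space by weight space, note each weight space of $\big(M\otimes S\big)(0)$ is finite dimensional so admits a Jordan decomposition for the odd nilpotent operator $D_{\mathfrak{g},\mathfrak{h}}(M)$, and count parities. The only cosmetic difference is the bookkeeping: you tally the contribution of each individual Jordan block to both sides (odd block size gives a matched $\pm1$, even block size gives $0$), whereas the paper groups the one-dimensional pieces by "column" into spaces $N_{\mu,k}$ and telescopes the resulting sum; both computations amount to the same thing.
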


		\begin{proof} Since we are working in the Grothendieck group of $\mathfrak{h}$, it suffices to prove the statement for the corresponding formal characters. In other words, it suffices to show that, for every $\mu\in\mathfrak{t}^*$,
			\begin{equation*}
			\dim [\big(M\otimes S\big)(0)^+]_\mu-\dim [\big(M\otimes S\big)(0)^-]_\mu=\dim H_{\mathrm{top}}(M)^+_\mu-H_{\mathrm{top}}(M)^-_\mu.
			\end{equation*}

	  For every $\mu\in\mathfrak{t}^*$, $[\big(M\otimes S\big)(0)]_\mu$ is finite dimensional and can be decomposed into Jordan blocks for $D_{\mathfrak{g},\mathfrak{h}}(M)$:
	  \begin{equation*}
	  [\big(M\otimes S\big)(0)]_\mu=J_1^{(\mu)}\oplus \ldots\oplus J_{n_\mu}^{(\mu)}.
	  \end{equation*}
	  For every $j\in\{1,\ldots,n_\mu\}$, the Jordan block $J^{(\mu)}_j$ can be decomposed into a direct sum of $1$-dimensional subspaces
	  	\begin{equation*}
	  W_1^{(\mu,j)}\oplus W_2^{(\mu,j)}\oplus \ldots\oplus W_{m_{\mu,j}}^{(\mu,j)}
	  \end{equation*}
	   such that
	  \begin{equation*}
	D_{\mathfrak{g},\mathfrak{h}}(M)W^{(\mu,j)}_1=0
	\end{equation*}
	 and 
	 \begin{equation*}
	 D_{\mathfrak{g},\mathfrak{h}}(M):W_{i}^{(\mu,j)}\xrightarrow{\cong}W_{i-1}^{(\mu,j)} \hspace{0.5mm}\text{ for } i=2,\ldots,m_{\mu,j}.
	 \end{equation*}
	   We say that the Jordan block $J^{(\mu)}_j$ is of size $m_{\mu,j}$. Moreover, we can choose each $W_i^{(\mu,j)}$ to be contained either in $\big(M\otimes S\big)(0)^+$ or in $\big(M\otimes S\big)(0)^-$. For every $k>0$, set 
	   \begin{equation*}
	   N_{\mu,k}:=\bigoplus\limits_{j=1}^{n_\mu}W_k^{(\mu,j)}\text{ and }N_{\mu,k}^\pm=N_{\mu,k}\cap\big(M\otimes S\big)^\pm.
	   \end{equation*}
   
   \begin{figure}
   \begin{tikzcd}[row sep=small]	
   	&N_{\mu,3} &  N_{\mu,2} & N_{\mu,1} &\\
   	J_1^{(\mu)}:\hspace{2mm}\ldots \arrow[r,"D_{\mathfrak{g},\mathfrak{h}}(M)"]&\fbox{\Centerstack[l]{$W_3^{(\mu,1)}$}} \arrow[r,"D_{\mathfrak{g},\mathfrak{h}}(M)"] &  \fbox{\Centerstack[l]{$W_2^{(\mu,1)}$}} \arrow[r,"D_{\mathfrak{g},\mathfrak{h}}(M)"]& \fbox{\Centerstack[l]{$W_1^{(\mu,1)}$}}\arrow[r,"D_{\mathfrak{g},\mathfrak{h}}(M)"]&0\\
   	J_2^{(\mu)}:\hspace{2mm}\ldots \arrow[r,"D_{\mathfrak{g},\mathfrak{h}}(M)"]&\fbox{\Centerstack[l]{$W_3^{(\mu,2)}$}} \arrow[r,"D_{\mathfrak{g},\mathfrak{h}}(M)"] &  \fbox{\Centerstack[l]{$W_2^{(\mu,2)}$}} \arrow[r,"D_{\mathfrak{g},\mathfrak{h}}(M)"]& \fbox{\Centerstack[l]{$W_1^{(\mu,2)}$}}\arrow[r,"D_{\mathfrak{g},\mathfrak{h}}(M)"]&0\\
   	J_3^{(\mu)}:\hspace{2mm}\ldots \arrow[r,"D_{\mathfrak{g},\mathfrak{h}}(M)"]&\fbox{\Centerstack[l]{$W_3^{(\mu,3)}$}} \arrow[r,"D_{\mathfrak{g},\mathfrak{h}}(M)"] &  \fbox{\Centerstack[l]{$W_2^{(\mu,3)}$}} \arrow[r,"D_{\mathfrak{g},\mathfrak{h}}(M)"]& \fbox{\Centerstack[l]{$W_1^{(\mu,3)}$}}\arrow[r,"D_{\mathfrak{g},\mathfrak{h}}(M)"]&0\\
   	&\vdots&  \vdots& \vdots&
   \end{tikzcd}
   \vspace{-0.1mm}
   \hspace{-8.95cm}
   \begin{tikzcd}
   	\draw [line width=0.5pt,dash pattern=on 1pt off 1pt] (-9.85,2.5)-- (-9.85,-3);
   	\draw [line width=0.5pt,dash pattern=on 1pt off 1pt] (-9.85,2.5)-- (-8.3,2.5);
   	\draw [line width=0.5pt,dash pattern=on 1pt off 1pt] (-8.3,2.5)-- (-8.3,-3);
   	\draw [line width=0.5pt,dash pattern=on 1pt off 1pt] (-7.1,2.5)-- (-7.1,-3);
   	\draw [line width=0.5pt,dash pattern=on 1pt off 1pt] (-7.1,2.5)-- (-5.55,2.5);
   	\draw [line width=0.5pt,dash pattern=on 1pt off 1pt] (-5.55,2.5)-- (-5.55,-3);
   	\draw [line width=0.5pt,dash pattern=on 1pt off 1pt] (-4.35,2.5)-- (-4.35,-3);
   	\draw [line width=0.5pt,dash pattern=on 1pt off 1pt] (-4.35,2.5)-- (-2.8,2.5);
   	\draw [line width=0.5pt,dash pattern=on 1pt off 1pt] (-2.8,2.5)-- (-2.8,-3);
   \end{tikzcd}
\caption{Decomposition of $[\big(M\otimes S\big)(c)]_\mu$}
\end{figure}
   
	   Note that $N_{\mu,k}=N_{\mu,k}^+\oplus N_{\mu,k}^-$ while by convention $W_k^{(\mu,j)}=0$ whenever $k>m_{\mu,j}$. Then
	   \begin{equation*}
	   \big(\ker D_{\mathfrak{g},\mathfrak{h}}(M)_\mu^{2k+2}\big)^\pm=N_{\mu,1}^\pm\oplus\ldots \oplus N_{\mu,2k+2}^\pm
	   \end{equation*}
	   and
	   \begin{equation*}
	   \mathrm{im}\hspace{0.5mm} D_{\mathfrak{g},\mathfrak{h}}(M)_{\mid N_{\mu,2k+2}^\mp}=\bigoplus\limits_{\substack{j=1\text{ s.t.}\\ m_{\mu,j}\geq 2k+2}}^{n_\mu}(W_{2k+1}^{(\mu,j)})^\pm\cong N_{\mu,2k+2}^\mp
	   \end{equation*}
	   as vector spaces. Therefore
	 \begin{equation*}
	 {\renewcommand*{\arraystretch}{3}
	   \begin{array}{l}
	    \dfrac{N_{\mu,2k+1}^\pm}{\mathrm{im}\hspace{0.5mm}D_{\mathfrak{g},\mathfrak{h}}(M)_{\mid N_{\mu,2k+2}^\mp}}\cong \bigoplus\limits_{\substack{j=1\text{ s.t.}\\ m_{\mu,j}< 2k+2}}^{n_\mu}(W_{2k+1}^{(\mu,j)})^\pm=\bigoplus\limits_{\substack{j=1\text{ s.t.}\\ m_{\mu,j}= 2k+1}}^{n_\mu}(W_{2k+1}^{(\mu,j)})^\pm\\
	   \cong \dfrac{\big(\ker D_{\mathfrak{g},\mathfrak{h}}(M)^{2k+1}_\mu\big)^\pm}{\big(\ker D_{\mathfrak{g},\mathfrak{h}}(M)^{2k+1}_\mu\cap\mathrm{im}\hspace{0.5mm}D_{\mathfrak{g},\mathfrak{h}}(M)_\mu+\ker D_{\mathfrak{g},\mathfrak{h}}(M)^{2k}_\mu\big)^\pm}\cong H_\mathrm{top}^k(M)_\mu^\pm
	   \end{array}}
	   \end{equation*}
	   so that
	   \begin{align*}
	   \dim H^k_{\mathrm{top}}(M)_\mu^\pm&=\dim N_{\mu,2k+1}^\pm-\dim \mathrm{im}\hspace{0.5mm}D_{\mathfrak{g},\mathfrak{h}}(M)_{\mid N_{\mu,2k+2}^\mp}\\
	   &=\dim N_{\mu,2k+1}^\pm-\dim N_{\mu,2k+2}^\mp.
	   \end{align*}
	   Then 
	   \begin{align*}
	   	\dim H_{\mathrm{top}}(M)^+_\mu-\dim H_{\mathrm{top}}(M)^-_\mu=& \sum_{k\geq0}  \dim H^k_{\mathrm{top}}(M)_\mu^+-\sum_{k\geq0} \dim H^k_{\mathrm{top}}(M)_\mu^-\\
	   	=&\sum_{k\geq0}\big(\dim N_{\mu,2k+1}^+-\dim N_{\mu,2k+2}^-\big)\\
	   	&-\sum_{k\geq0}\big(\dim N_{\mu,2k+1}^--\dim N_{\mu,2k+2}^+\big)\\
	   	=&\sum_{k\geq0} \big(\dim N_{\mu,2k+1}^++\dim N_{\mu,2k+2}^+\big)\\
	   	&-\sum_{k\geq0} \big(\dim N_{\mu,2k+1}^-+\dim N_{\mu,2k+2}^-\big)\\
	   	=&\sum_{k\geq1}\dim N_{\mu,k}^+-\sum_{k\geq1}\dim N_{\mu,k}^-\\
	   	=&	\dim [\big(M\otimes S\big)(0)^+]_\mu-\dim[\big(M\otimes S\big)(0)^-]_\mu.
	   \end{align*}	   
	\end{proof}

The following theorem concerning the exact circle \eqref{exactcircle} is the analogue of \cite[Theorem 1.13]{somberg}. Nevertheless, the maps that we obtain are $\mathfrak{t}$-equivariant.

\begin{theorem}
	Let $M_1,M_2,M_3\in\mathcal{O}$ and 
	\begin{equation*}
	0\longrightarrow M_1\longrightarrow M_2\longrightarrow M_3\longrightarrow 0
	\end{equation*}
	be an exact sequence. Then there are maps such that 
	\vspace{3mm}
		\begin{equation}\label{againexact}
	\begin{tikzcd}[cramped, sep=huge]
	H_\mathrm{top}(M_1)^+\arrow[r] 
	& H_\mathrm{top}(M_2)^+ \arrow[r] &H_\mathrm{top}(M_3)^+ \arrow[d]\\
	H_\mathrm{top}(M_3)^-\arrow[u]
	&\arrow[l]H_\mathrm{top}(M_2)^- &\arrow[l]H_\mathrm{top}(M_1)^-
	\end{tikzcd}
	\end{equation}
	forms an exact circle of $\mathfrak{t}$-modules.
\end{theorem}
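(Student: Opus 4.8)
The plan is to reduce the statement to a weight-by-weight computation, exactly as outlined in the paragraph preceding Proposition \ref{propindex}, and then to reuse, verbatim for each $\mathfrak{t}$-weight subspace, the argument that Pand\v zi\'c and Somberg used in \cite{somberg} for $(\mathfrak{g},K)$-modules. First I would decompose the generalized kernels $\big(M_i\otimes S\big)(0)$ into $\mathfrak{t}$-weight subspaces $[\big(M_i\otimes S\big)(0)]_\mu$, each of which is finite dimensional since $M_i\in\mathcal{O}$; because $\mathfrak{t}\subseteq\mathfrak{h}$ and $D_{\mathfrak{g},\mathfrak{h}}(M_i)$ is $\mathfrak{h}$-equivariant, it is in particular $\mathfrak{t}$-equivariant and preserves each such subspace. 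On each finite dimensional $[\big(M_i\otimes S\big)(0)]_\mu$ the operator $D_{\mathfrak{g},\mathfrak{h}}(M_i)$ is nilpotent, so it admits a Jordan block decomposition, and the finite dimensional linear algebra underlying \eqref{exactcircleweight} goes through unchanged. This produces, for each $\mu$, an exact hexagon of $\mathfrak{t}$-modules
\begin{equation*}
\begin{tikzcd}[cramped, sep=small]
H_\mathrm{top}(M_1)^+_\mu\arrow[r]
& H_\mathrm{top}(M_2)^+_\mu \arrow[r] &H_\mathrm{top}(M_3)^+_\mu \arrow[d]\\
H_\mathrm{top}(M_3)^-_\mu\arrow[u]
&\arrow[l]H_\mathrm{top}(M_2)^-_\mu &\arrow[l]H_\mathrm{top}(M_1)^-_\mu
\end{tikzcd}
\end{equation*}

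Next I would assemble these weight-graded pieces. The key point is that $H_\mathrm{top}(M_i)^\pm=\bigoplus_\mu H_\mathrm{top}(M_i)^\pm_\mu$ as $\mathfrak{t}$-modules (the higher Dirac cohomology is a subquotient of $M_i\otimes S$, which is a $\mathfrak{t}$-weight module), so taking the direct sum over all $\mu\in\mathfrak{t}^*$ of the maps constructed weight-by-weight yields $\mathfrak{t}$-equivariant maps
\begin{equation*}
H_\mathrm{top}(M_i)^\pm\longrightarrow H_\mathrm{top}(M_j)^\pm,\qquad
H_\mathrm{top}(M_3)^+\longrightarrow H_\mathrm{top}(M_3)^-,\qquad
H_\mathrm{top}(M_1)^-\longrightarrow H_\mathrm{top}(M_1)^+,
\end{equation*}
and exactness of a sequence of $\mathfrak{t}$-weight modules can be checked in each weight, where it holds by \eqref{exactcircleweight}. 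Thus \eqref{againexact} is an exact circle of $\mathfrak{t}$-modules. I would also want to note that the connecting and induced maps arise from the short exact sequence $0\to M_1\to M_2\to M_3\to 0$: tensoring with $S$ (exact, since $S$ is finite dimensional and we are over a field) gives $0\to M_1\otimes S\to M_2\otimes S\to M_3\otimes S\to 0$, this restricts to the generalized $0$-eigenspaces of the $D^2$'s because the maps in the sequence intertwine the Dirac operators, and then one is in precisely the finite-dimensional situation of \cite[Section 9.6.2]{pandzic} on each weight space.

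The main obstacle, and the only place genuine care is needed, is the same one flagged in the remark after the definition of higher Dirac cohomology: the maps on each weight subspace depend on a choice of Jordan decomposition of $D_{\mathfrak{g},\mathfrak{h}}(M_i)$ restricted to $[\big(M_i\otimes S\big)(0)]_\mu$, hence the global maps in \eqref{againexact} are noncanonical, and one must be consistent in the choices across the three modules $M_1,M_2,M_3$ so that the squares in the hexagon actually commute. This is handled exactly as in \cite{somberg}: one first fixes compatible Jordan decompositions adapted to the filtration $M_1\otimes S\subseteq M_2\otimes S$ (restricted to each weight space), which is possible because $D_{\mathfrak{g},\mathfrak{h}}(M_2)$ preserves the subspace $M_1\otimes S$ and its restriction there is $D_{\mathfrak{g},\mathfrak{h}}(M_1)$, and then transports the data to $M_3\otimes S$ via the quotient map. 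With these compatible choices in place, the verification of exactness is the finite-dimensional bookkeeping already carried out in the proof of Lemma \ref{lemdecomp} and in \cite[Theorem 1.13]{somberg}, applied one weight at a time; since $\mathfrak{t}$ acts semisimply and everything in sight is $\mathfrak{t}$-graded, the resulting maps are automatically $\mathfrak{t}$-equivariant, which is the slightly weaker conclusion (compared with the $\mathfrak{h}$-equivariance available in the finite-multiplicity setting of \cite{somberg}) appropriate here. I would therefore present the proof as: (1) reduce to weight spaces; (2) invoke the finite-dimensional construction of \cite{somberg} on each weight space with compatible Jordan choices; (3) take direct sums and check exactness weight-by-weight.
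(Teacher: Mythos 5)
Your proposal matches the paper's overall strategy: reduce to finite-dimensional $\mathfrak t$-weight subspaces of the generalized kernels $\big(M_i\otimes S\big)(0)$, Jordan decompose for the restricted Dirac operator, run the finite-dimensional argument of \cite{somberg} there, and then take direct sums over $\mu\in\mathfrak t^*$ to obtain an exact circle of $\mathfrak t$-modules. The assembly step, the $\mathfrak t$-equivariance of the resulting maps, and the remark that the maps are noncanonical (because they depend on the chosen Jordan decompositions) all agree with the paper.

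There is, however, one concrete place where your proposal deviates from (and is weaker than) the paper's, and it is exactly where the proof actually has to do work. You write that one ``first fixes compatible Jordan decompositions adapted to the filtration $M_1\otimes S\subseteq M_2\otimes S$, which is possible because $D_{\mathfrak g,\mathfrak h}(M_2)$ preserves the subspace $M_1\otimes S$,'' and then transports this to $M_3\otimes S$. But invariance of a subspace under a nilpotent operator does \emph{not} by itself give a Jordan basis of the ambient space that restricts to one of the subspace; this is a genuine obstruction in linear algebra (a nilpotent $T$-invariant subspace of $V$ need not be a direct sum of intersections with Jordan chains of $V$). Simply citing invariance of $M_1\otimes S$ is not a justification. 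The paper goes in the opposite direction: it Jordan-decomposes $\big(M_3\otimes S\big)(0)_\mu$, lifts each \emph{top} summand ${}^{(3)}W_{m_{\mu,j}}^{(\mu,j)}$ through $p$ to a vector in $\big(M_2\otimes S\big)(0)_\mu$, \emph{proves} (using $p\circ D_{\mathfrak g,\mathfrak h}(M_2)=D_{\mathfrak g,\mathfrak h}(M_3)\circ p$) that this lift is again a top summand of a Jordan block ${}^{(2)}J_j^\mu$, and only then defines ${}^{(1)}J_j^\mu:=\ker\big({}^{(2)}J_j^\mu\to{}^{(3)}J_j^\mu\big)$ as the piece sitting inside $M_1\otimes S$. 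This direction is the right one because the lifting and the ``top-ness'' claim can actually be verified, whereas starting from $M_1$ asserts an existence statement without argument. You should rewrite step (2) of your outline to follow the paper's (and \cite{somberg}'s) quotient-to-sub direction, and include the short verification that a lift of a top of ${}^{(3)}J_j^\mu$ is a top in $\big(M_2\otimes S\big)(0)_\mu$ — that is the one nontrivial lemma in the argument, and your sketch omits it.
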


\begin{proof} We adapt the proof of \cite[Theorem 1.13]{somberg}; instead of $\mathfrak{h}$-isotypic components we consider finite dimensional weight subspaces.
	
Since tensorizing with a finite dimensional module is an exact functor, we obtain the exact sequence of $\mathfrak{h}$-modules:
\begin{equation}\label{exact}
0\longrightarrow M_1\otimes S\mathop{\longrightarrow}^{i}M_2\otimes S\mathop{\longrightarrow}^{p} M_3\otimes S\longrightarrow0.
\end{equation}
We note that the maps $i$ and $p$ of the above exact sequence commute with the action of $U(\mathfrak{g})\otimes \mathbf{C}(\mathfrak{q})$. As a consequence, they commute with the corresponding Dirac operators. Namely
\begin{align*}
i\circ D_{\mathfrak{g},\mathfrak{h}}(M_1)&=D_{\mathfrak{g},\mathfrak{h}}(M_2)\circ i\\
\text{and }\quad p\circ D_{\mathfrak{g},\mathfrak{h}}(M_2)&=D_{\mathfrak{g},\mathfrak{h}}(M_3)\circ p.
\end{align*}

Each $\mathfrak{h}$-module $M_i\otimes S$, $i=1,2,3$, admits a direct sum decomposition in terms of generalized eigenspaces for $D_{\mathfrak{g},\mathfrak{h}}(M_i)^2$ and in terms of weight subspaces while these decompositions are compatible.
Therefore, \eqref{exact} breaks up into a direct sum of exact sequences
\begin{equation*}
0\longrightarrow \big(M_1\otimes S\big)(c)_\mu\mathop{\longrightarrow}^{i}\big(M_2\otimes S\big)(c)_\mu\mathop{\longrightarrow}^{p} \big(M_3\otimes S\big)(c)_\mu\longrightarrow0,
\end{equation*}
where $\mu\in\mathfrak{t}^*$ and $\big(M_i\otimes S\big)(c)_\mu$ is the corresponding weight subspace of the generalized eigenspace with eigenvalue $c$. We are interested in the case of the generalized kernel of $D_{\mathfrak{g},\mathfrak{h}}(M_i)$, i.e. when $c=0$:

\begin{equation}\label{maria}
0\longrightarrow \big(M_1\otimes S\big)(0)_\mu\mathop{\longrightarrow}^{i}\big(M_2\otimes S\big)(0)_\mu\mathop{\longrightarrow}^{p} \big(M_3\otimes S\big)(0)_\mu\longrightarrow0.
\end{equation}
Since $\big(M_3\otimes S\big)(0)_\mu$ is finite dimensional, it can be decomposed into a direct sum of Jordan blocks ${}^{(3)}\hspace{-0.5mm}J_1^{\mu},\ldots, {}^{(3)}\hspace{-0.5mm}J_{n_\mu}^{\mu}$ for $D_{\mathfrak{g},\mathfrak{h}}(M_3)$. As we saw in the proof of Lemma \ref{lemdecomp}, every such block ${}^{(3)}\hspace{-0.5mm}J_j^{\mu}$ can be decomposed into a direct sum of $1$-dimensional subspaces
\begin{equation*}
{}^{(3)}\hspace{-0.5mm}J_j^{\mu}={}^{(3)}\hspace{-0.2mm}W_1^{(\mu,j)}\oplus\ldots\oplus{}^{(3)}\hspace{-0.2mm}W_{m_{\mu,j}}^{(\mu,j)}
\end{equation*}
such that 
  \begin{equation*}
D_{\mathfrak{g},\mathfrak{h}}(M_3){}^{(3)}\hspace{-0.2mm}W^{(\mu,j)}_1=0
\end{equation*}
and 
\begin{equation*}
D_{\mathfrak{g},\mathfrak{h}}(M_3):{}^{(3)}\hspace{-0.2mm}W_{i}^{(\mu,j)}\xrightarrow{\cong}{}^{(3)}\hspace{-0.2mm}W_{i-1}^{(\mu,j)} \hspace{0.5mm}\text{ for } i=2,\ldots,m_{\mu,j}.
\end{equation*}
For every top summand ${}^{(3)}\hspace{-0.2mm}W_{m_{\mu,j}}^{(\mu,j)}$, let ${}^{(2)}\hspace{-0.2mm}W_{l_{\mu,j}}^{(\mu,j)}$ be a preimage in $\big(M_2\otimes S)(0)_\mu$. Then, ${}^{(2)}\hspace{-0.2mm}W_{l_{\mu,j}}^{(\mu,j)}$ must be a top summand of a Jordan block of $\big(M_2\otimes S\big)(0)_\mu$. Indeed, if we suppose that there exists ${}^{(2)}\hspace{-0.2mm}W_{l_{\mu,j}+1}^{(\mu,j)}\neq 0$ such that 
\begin{equation*}
D_{\mathfrak{g},\mathfrak{h}}(M_2):{}^{(2)}\hspace{-0.2mm}W_{l_{\mu,j}+1}^{(\mu,j)}\xrightarrow{\cong}{}^{(2)}\hspace{-0.2mm}W_{l_{\mu,j}}^{(\mu,j)},
\end{equation*}
then 
\begin{equation*}
{}^{(3)}\hspace{-0.2mm}W_{m_{\mu,j}}^{(\mu,j)}=p\circ D_{\mathfrak{g},\mathfrak{h}}(M_2)\big({}^{(2)}\hspace{-0.2mm}W_{l_{\mu,j}+1}^{(\mu,j)}\big)=D_{\mathfrak{g},\mathfrak{h}}(M_3)\circ p\big({}^{(2)}\hspace{-0.2mm}W_{l_{\mu,j}+1}^{(\mu,j)}\big)=0
\end{equation*}
which is a contradiction. Hence, ${}^{(2)}\hspace{-0.2mm}W_{l_{\mu,j}}^{(\mu,j)}$ is the top summand of a Jordan block, say ${}^{(2)}\hspace{-0.5mm}J_j^\mu$, in $\big(M_2\otimes S\big)(0)_\mu$.
Then
\begin{equation*}
{}^{(1)}\hspace{-0.5mm}J_j^\mu:=\ker \big({}^{(2)}\hspace{-0.5mm}J_j^\mu\rightarrow {}^{(3)}\hspace{-0.5mm}J_j^\mu\big)
\end{equation*}
is a Jordan block of length $k_{\mu,j}:=l_{\mu,j}-m_{\mu,j}$ in $\big(M_1\otimes S\big)_\mu$, say of top summand ${}^{(1)}\hspace{-0.2mm}W_{k_{\mu,j}}^{(\mu,j)}$, and 
\begin{equation*}
0\longrightarrow {}^{(1)}\hspace{-0.5mm}J_j^\mu\mathop{\longrightarrow}^{i}{}^{(2)}\hspace{-0.5mm}J_j^\mu\mathop{\longrightarrow}^{p} {}^{(3)}\hspace{-0.5mm}J_j^\mu\longrightarrow0
\end{equation*}
forms a short exact sequence of vector spaces.

  \begin{figure}
	\begin{tikzcd}[row sep=small]	
		&{}^{(1)}\hspace{-0.5mm}J_j^\mu &  {}^{(2)}\hspace{-0.5mm}J_j^\mu & {}^{(3)}\hspace{-0.5mm}J_j^\mu &\\
		0\arrow[r]&0\arrow[r,"i"] &  \boxed{\substack{{}^{(2)}\hspace{-0.2mm}W_{l_{\mu,j}}^{(\mu,j)}\\ \downarrow \\ \vdots \\ \downarrow\\ {}^{(2)}\hspace{-0.2mm}W_{l_{\mu,j}-m_{\mu,j}+1}^{(\mu,j)}}} \arrow[r,"p"]& \boxed{\substack{{}^{(3)}\hspace{-0.2mm}W_{m_{\mu,j}}^{(\mu,j)}\\ \downarrow \\ \vdots\\ \downarrow\\ {}^{(3)}\hspace{-0.2mm}W_{1}^{(\mu,j)}}}\arrow[r]&0\\
		&&\downarrow&&\\
		0\arrow[r]&\boxed{\substack{{}^{(1)}\hspace{-0.2mm}W_{k_{\mu,j}}^{(\mu,j)}\\ \downarrow \\ \vdots\\ \downarrow\\ {}^{(1)}\hspace{-0.2mm}W_{1}^{(\mu,j)}}}\arrow[r] &  \boxed{\substack{{}^{(2)}\hspace{-0.2mm}W_{k_{\mu,j}}^{(\mu,j)}\\ \downarrow \\ \vdots \\ \downarrow\\ {}^{(2)}\hspace{-0.2mm}W_{1}^{(\mu,j)}}} \arrow[r,"p"]& 0\arrow[r]&0
	\end{tikzcd}
\caption{Decomposition of \eqref{maria} into compatible Jordan blocks}
\end{figure}

Then, either $k_{\mu,j}$, $l_{\mu,j}$ and $m_{\mu,j}$ are all even so that the corresponding Jordan blocks do not contribute to the higher Dirac cohomology, or exactly one of them is even while the other two are odd. Assume that $k_{\mu,j}$ is even. Then, 
\begin{align*}
H_\mathrm{top}({}^{(1)}\hspace{-0.5mm}J_j^\mu)&=0\\ H_\mathrm{top}({}^{(2)}\hspace{-0.5mm}J_j^\mu)&={}^{(2)}\hspace{-0.2mm}W_{l_{\mu,j}+1}^{(\mu,j)}\\
\text{and }\hspace{2mm} H_\mathrm{top}({}^{(3)}\hspace{-0.5mm}J_j^\mu)&={}^{(3)}\hspace{-0.2mm}W_{k_{\mu,j}+1}^{(\mu,j)}
\end{align*}
so that
\begin{equation*}
\begin{tikzcd}
& H_\mathrm{top}({}^{(1)}\hspace{-0.5mm}J_j^\mu) \arrow[dr,"0"] \\
H_\mathrm{top}({}^{(3)}\hspace{-0.5mm}J_j^\mu)\arrow[ur,"0"] && H_\mathrm{top}({}^{(2)}\hspace{-0.5mm}J_j^\mu)\arrow[ll,"1"]
\end{tikzcd}
\end{equation*}
forms an exact circle. In the same way, we treat the other two cases.
Therefore, applying the same procedure for the other Jordan blocks of $\big(M_3\otimes S\big)(0)_\mu$, we obtain an exact circle
\begin{equation*}
\begin{tikzcd}
& H_\mathrm{top}(M_1)_\mu \arrow[dr] \\
H_\mathrm{top}(M_3)_\mu\arrow[ur] && H_\mathrm{top}(M_2)_\mu\arrow[ll]
\end{tikzcd}
\end{equation*}
Taking direct sums over $\mu\in\mathfrak{t}^*$, we obtain an exact circle of $\mathfrak{t}$-modules
\begin{equation*}
\begin{tikzcd}
& H_\mathrm{top}(M_1) \arrow[dr] \\
H_\mathrm{top}(M_3)\arrow[ur] && H_\mathrm{top}(M_2)\arrow[ll]
\end{tikzcd}
\end{equation*}
while considering the natural $\mathbb{Z}_2$-gradings, we obtain the exact circle \eqref{againexact}.
\end{proof}

Finally, we can state the analogue of \cite[Theorem 1.10]{somberg}.

\begin{theorem}
	Let $M\in\mathcal{O}_{\chi_\lambda}$. Then $H_{\mathrm{top}}(M)\in\mathcal{O}(\mathfrak{h})_{\xi_\lambda}$.
\end{theorem}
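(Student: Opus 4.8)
The goal is to verify the two requirements separately: that $H_{\mathrm{top}}(M)$ belongs to $\mathcal{O}(\mathfrak{h})$, and that the only generalized $Z(\mathfrak{h})$-infinitesimal characters it carries are the $\xi_\mu$ with $\mu$ $W$-conjugate to $\lambda$.

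First I would establish finite generation. Each space $\ker D_{\mathfrak{g},\mathfrak{h}}(M)^{2k+1}$ appearing in the definition of $H^k_{\mathrm{top}}(M)$ is contained in the generalized kernel $\big(M\otimes S\big)(0)$, which is finitely generated over the Noetherian algebra $U(\mathfrak{h})$ by Corollary \ref{decom} and \cite[Proposition 3.27]{Knapp1}; hence $\ker D_{\mathfrak{g},\mathfrak{h}}(M)^{2k+1}$, and therefore $H^k_{\mathrm{top}}(M)$, is finitely generated for each $k$. It remains to see that $\bigoplus_{k\geq0}H^k_{\mathrm{top}}(M)$ is a finite sum. Here I would use that $D_{\mathfrak{g},\mathfrak{h}}(M)$, being $\mathfrak{h}$-equivariant, commutes with the $U(\mathfrak{h})$-action: writing $\big(M\otimes S\big)(0)=\sum_i U(\mathfrak{h})v_i$ and picking $N$ with $D_{\mathfrak{g},\mathfrak{h}}(M)^N v_i=0$ for all $i$, we get $D_{\mathfrak{g},\mathfrak{h}}(M)^N\big(M\otimes S\big)(0)=0$, so $\ker D_{\mathfrak{g},\mathfrak{h}}(M)^{2k}=\ker D_{\mathfrak{g},\mathfrak{h}}(M)^{2k+1}=\big(M\otimes S\big)(0)$ as soon as $2k\geq N$, whence $H^k_{\mathrm{top}}(M)=0$ for such $k$. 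Thus $H_{\mathrm{top}}(M)$ is a finite direct sum of finitely generated $\mathfrak{h}$-modules; being in addition a weight module and locally $\mathfrak{n}_\mathfrak{h}^+$-finite (inherited from $M\otimes S$), it lies in $\mathcal{O}(\mathfrak{h})$.

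Next, for the infinitesimal character I would adapt the proof of Corollary \ref{vogy}. Fix $z\in Z(\mathfrak{g})$ and choose $a\in[U(\mathfrak{g})\otimes\mathbf{C}(\mathfrak{q})]^\mathfrak{h}$ as in \eqref{vcon}, so that $z\otimes 1-\zeta(z)=D_{\mathfrak{g},\mathfrak{h}}a+aD_{\mathfrak{g},\mathfrak{h}}$. The crucial claim is that $z\otimes 1$ and $\zeta(z)$ induce the same operator on $H^k_{\mathrm{top}}(M)$. To check this, take $x\in\ker D_{\mathfrak{g},\mathfrak{h}}(M)^{2k+1}$; using that $D_{\mathfrak{g},\mathfrak{h}}(M)^2$ is central in $[U(\mathfrak{g})\otimes\mathbf{C}(\mathfrak{q})]^\mathfrak{h}$ — hence $D_{\mathfrak{g},\mathfrak{h}}(M)^{2k}$ commutes with $a$ — one sees that $D_{\mathfrak{g},\mathfrak{h}}(M)(ax)$ lies in $\ker D_{\mathfrak{g},\mathfrak{h}}(M)^{2k+1}\cap\mathrm{im}\,D_{\mathfrak{g},\mathfrak{h}}(M)$ and that $a\,D_{\mathfrak{g},\mathfrak{h}}(M)x$ lies in $\ker D_{\mathfrak{g},\mathfrak{h}}(M)^{2k}$; both land in the denominator of $H^k_{\mathrm{top}}(M)$, so $(z\otimes1-\zeta(z))x$ represents $0$. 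Since $M\in\mathcal{O}_{\chi_\lambda}$, $z\otimes1$ acts on $M\otimes S$, hence on the subquotient $H^k_{\mathrm{top}}(M)$, with generalized eigenvalue $\chi_\lambda(z)$; therefore so does $\zeta(z)$. Now decompose $H_{\mathrm{top}}(M)\in\mathcal{O}(\mathfrak{h})$ into its generalized $Z(\mathfrak{h})$-infinitesimal character summands $H_{\mathrm{top}}(M)^{\xi_\mu}$; on a nonzero such summand $\zeta(z)$ has generalized eigenvalue $\xi_\mu(\zeta(z))$, and comparing with the previous line forces $\xi_\mu(\zeta(z))=\chi_\lambda(z)$ for all $z$. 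By the commutative diagram relating $\zeta$ to the Harish-Chandra maps (used in the proof of Corollary \ref{vogy}) this gives $\chi_\mu=\chi_\lambda$, i.e. $\mu$ is $W$-conjugate to $\lambda$, which is exactly the assertion $H_{\mathrm{top}}(M)\in\mathcal{O}(\mathfrak{h})_{\xi_\lambda}$.

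The step I expect to be the main obstacle is the claim that $z\otimes 1$ and $\zeta(z)$ descend to the same operator on $H^k_{\mathrm{top}}(M)$. For $k=0$ this is the trivial input to the usual Vogan-conjecture argument, since $\ker D_{\mathfrak{g},\mathfrak{h}}(M)$ is genuinely annihilated by $D_{\mathfrak{g},\mathfrak{h}}(M)$; for $k\geq1$ the subspace $\ker D_{\mathfrak{g},\mathfrak{h}}(M)^{2k+1}$ is not stable under $D_{\mathfrak{g},\mathfrak{h}}(M)$ in any naive way, and one genuinely needs the centrality of $D_{\mathfrak{g},\mathfrak{h}}(M)^2$ to control where $D_{\mathfrak{g},\mathfrak{h}}(M)(ax)$ and $a\,D_{\mathfrak{g},\mathfrak{h}}(M)x$ sit relative to the more complicated denominator $\ker D_{\mathfrak{g},\mathfrak{h}}(M)^{2k+1}\cap\mathrm{im}\,D_{\mathfrak{g},\mathfrak{h}}(M)+\ker D_{\mathfrak{g},\mathfrak{h}}(M)^{2k}$. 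A related point to watch is the truncation of the sum over $k$: it rests on the fact that one single power of $D_{\mathfrak{g},\mathfrak{h}}(M)$ kills the entire (a priori infinite-dimensional) generalized kernel, which is where finite generation from Corollary \ref{decom} is used.
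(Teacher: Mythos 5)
Your proof is correct and follows essentially the same approach as the paper: the core of the argument is showing that $D_{\mathfrak{g},\mathfrak{h}}(M)a + aD_{\mathfrak{g},\mathfrak{h}}(M)$ acts by zero on each $H^k_{\mathrm{top}}(M)$, and your use of the centrality of $D_{\mathfrak{g},\mathfrak{h}}(M)^2$ to place $D_{\mathfrak{g},\mathfrak{h}}(M)(ax)$ in $\ker D_{\mathfrak{g},\mathfrak{h}}(M)^{2k+1}\cap\mathrm{im}\,D_{\mathfrak{g},\mathfrak{h}}(M)$ and $a\,D_{\mathfrak{g},\mathfrak{h}}(M)x$ in $\ker D_{\mathfrak{g},\mathfrak{h}}(M)^{2k}$ is exactly the paper's computation. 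You additionally make explicit, via finite generation of $(M\otimes S)(0)$ and $\mathfrak{h}$-equivariance of $D_{\mathfrak{g},\mathfrak{h}}(M)$, that the sum $\bigoplus_{k}H^k_{\mathrm{top}}(M)$ terminates, so that $H_{\mathrm{top}}(M)$ really lies in $\mathcal{O}(\mathfrak{h})$ — a point the paper leaves implicit.
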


\begin{proof}
	The proof is similar to the proof of \cite[Theorem 1.10]{somberg} and uses the argument developed in \cite{K3,pandzic,DH} that we expained in the proof of Corollary \ref{vogy}. Namely, with the notation used in the proof of Corollary \ref{vogy}, it suffices to show that $D_{\mathfrak{g},\mathfrak{h}}(M)a +aD_{\mathfrak{g},\mathfrak{h}}(M)$ of expression \eqref{katokato} acts by zero on $H_{\mathrm{top}}(M)$.
	Let $x\in \ker D_{\mathfrak{g},\mathfrak{h}}(M)^{2k+1}$ for some $k\geq0$. Then 
	\begin{equation*}
	D_{\mathfrak{g},\mathfrak{h}}(M)^{2k}aD_{\mathfrak{g},\mathfrak{h}}(M)x=aD_{\mathfrak{g},\mathfrak{h}}(M)^{2k+1}x=0
	\end{equation*}
	so that $aD_{\mathfrak{g},\mathfrak{h}}(M)x\in\ker D_{\mathfrak{g},\mathfrak{h}}(M)^{2k}$ while
	\begin{equation*}D_{\mathfrak{g},\mathfrak{h}}(M)^{2k+1}D_{\mathfrak{g},\mathfrak{h}}(M)ax=D_{\mathfrak{g},\mathfrak{h}}(M)^{2(k+1)}ax=aD_{\mathfrak{g},\mathfrak{h}}(M)^{2(k+2)}x=0\end{equation*}
	so that 
	$D_{\mathfrak{g},\mathfrak{h}}(M)ax\in \ker D_{\mathfrak{g},\mathfrak{h}}(M)^{2k+1}\cap\mathrm{im}\hspace{0.5mm}D_{\mathfrak{g},\mathfrak{h}}(M)$.
	Hence $D_{\mathfrak{g},\mathfrak{h}}(M)a+aD_{\mathfrak{g},\mathfrak{h}}(M)$ acts by zero on $H_{\mathrm{top}}(M)$.
\end{proof}

%\begin{corollary}
%	Let $M(\lambda)$ be the Verma module of highest weight $\lambda$. Then, in the Grothendieck's group
%	\begin{equation*}
%		H_{\mathrm{top}}(M(\lambda))=\sum[M(\lambda):L(w\cdot\lambda)] H_D(L(w\cdot\lambda))
%	\end{equation*}
%\end{corollary}

\end{document}